\newcommand{\po}{\left(}
\newcommand{\pf}{\right)}
\newcommand{\co}{\left[}
\newcommand{\cf}{\right]}
\newcommand{\cco}{\llbracket}
\newcommand{\ccf}{\rrbracket}
\newcommand{\R}{\mathbb R} 
\newcommand{\T}{\mathbb T} 
\newcommand{\Z}{\mathbb Z} 
\newcommand{\N}{\mathbb N} 
\newcommand{\dd}{\text{d}}
\newcommand{\na}{\nabla}
\newcommand{\1}{\mathbbm{1}}
\newtheorem{thm}{Theorem}
\newtheorem{prop}[thm]{Proposition}
\title{Kinetic walks for sampling}
\author{Pierre Monmarché}
\begin{document}
\maketitle


\abstract{The persistent walk is a classical model in kinetic theory, which has also been studied as a toy model for Markov Chain Monte Carlo questions. Its continuous limit, the telegraph process, has recently been extended to various velocity jump processes (Bouncy Particle Sampler, Zig-Zag process, etc.) in order to sample general target distributions on $\mathbb R^d$. This paper studies, from a sampling point of view, general kinetic walks that are natural discrete-time (and possibly discrete-space)  counterparts of these continuous-space processes. The main contributions of the paper are the definition and study of a discrete-space Zig-Zag sampler and the definition and time-discretisation of hybrid jump/diffusion kinetic samplers for multi-scale potentials on $\mathbb R^d$.}


\section{Introduction }

The classical persistent walk on $\Z$ is the Markov chain $(X_k,V_k)_{k\in\N}$ on $\Z\times\{-1,1\}$ with transitions
\[(X_{k+1},V_{k+1}) \ = \ \left\{ \begin{array}{ll}
(X_k + V_k,V_k) & \text{with probability }1-\alpha\\
(X_k,-V_k) & \text{else,}
\end{array}\right.\]
for some $\alpha\in[0,1]$. It describes the constant-speed motion of a self-propelled particle, $X_k$ denoting the position of the particle and $V_k$ its velocity. Since the time between two changes of the velocity follows a geometric distribution with parameter $\alpha$, $(\alpha X_{\lfloor \alpha t\rfloor},V_{\lfloor \alpha t\rfloor} )_{t\geqslant 0}$ naturally converges as $\alpha$ vanishes to the so-called telegraph process, for which the flips of the velocity are governed by a Poisson process \cite{HerrmannVallois}. From the seminal work of Goldstein \cite{Goldstein}, these two processes, and various extensions, have been studied in details, in particular from the point of view of statiscial physics and kinetic theory, or for other modelling motivations in physics, finance or biology (see for instance \cite{Kac,Zauderer,Renshaw,Hadeler,Chaturvedi,Rosetto,Gruber} and references within). 

Meanwhile, the search for efficient Markov Chain Monte Carlo (MCMC) methods led to the development of so-called  rejection-free or lifted chains (see e.g. \cite{Turitsyn,Diaconis2000,Krauth} and references within). In this context, the persistent walk has been a toy model to understand the efficiency of these algorithms, especially when compared to the reversible simple walk \cite{Diaconis2000,DiaconisMiclo,Monmarche2013}. For instance, correctly scaled, the persistent walk shows a ballistic behaviour, which means its expected distance to its initial position after $K$ steps is of order $K$, while the simple walk shows a diffusive behaviour, moving to a distance $\sqrt{K}$ after $K$ steps. Since the efficiency of the MCMC schemes is related to the speed at which the space is explored, this is an argument in favour of non-reversible kinetic processes. The model being simple, it is even possible to determine the optimal $\alpha$ (in the sense that it gives the maximal rate of convergence toward equilibrium on the periodic torus $\Z/(N\Z)$), which turns to be of order $1/N$ \cite{Monmarche2013}. This is consistent, as $N$ goes to infinity, with the ballistic scaling that yields the telegraph process (by contrast, if $\alpha$ is constant with $N$ and if time is accelerated by $N^2$, the persistent walk converges to the Brownian motion).

Of course, both the persistent walk and the telegraph process sample the uniform measure in dimension one, which is not of practical interest. These last years, the telegraph process has been extended to several continuous-space processes, such as the Zig-Zag sampler \cite{BierkensFearnheadRoberts,BierkensDuncan,BierkensRobertsZitt,BierkensRoberts} or the Bouncy Particle Sampler \cite{PetersdeWith,MonmarcheRTP,DurmusGuillinMonmarche2018,Doucet}, which are velocity jump processes designed to target any given distribution in any dimension. Many variants like randomized bounces \cite{RobertWu,Michelforward} are currently being developped and we refer to the review \cite{DoucetPDMCMC} for more details, considerations and references on this vivid topic.

The present paper is concerned with similar extensions, but conducted at the level of the persistent walk rather than of the continuous  kinetic process. Or, from another viewpoint, we are interested in persistent walks, but through the prism of MCMC sampling rather than kinetic theory. The  motivations are the following: first, the discrete chain yields some insights on their continuous-time limits (for instance, we will see that the Zig-Zag process can be seen as the continuous limit of a Gibbs algorithm). Second, used in an MCMC scheme on $\Z^d$, a persistent walk shares, as will be detailed in this work, the following advantages with its continuous counterparts: thinning, factorization and ballistic behaviour. Finally, although continuous-time velocity jump processes can sometimes be sampled exactly thanks to thinning methods, it is not necessarily the case for  mixed diffusion/jump  kinetic samplers (see Section \ref{Section-Schema-Euler}), in which case the corresponding chain obtained through an integration scheme (say, Euler scheme) is a discrete-time kinetic  walk.

The rest of the paper is organized as follows. We start in Section \ref{Section-Dim1} with the definition of an analogous on $\Z$ of the Zig-Zag process on $\R$ (or, equivalently, of the persistent walk but in a general potential landscape). The simplicity of the chain allows an elementary study of its ergodicity, of its metastable behaviour at small temperature via an Eyring-Kramers formula and of its convergence toward the continuous Zig-Zag process on $\R$ under proper scaling.  Section~\ref{Section-general} is a general and informal discussion about kinetic walks on $\R^d$,  their simulation, invariant measures and continuous-time scaling limits. Finally, the last two sections present two particular applications, which are the main contributions of this work: the discrete Zig-Zag walk in a  general potential in Section \ref{subSect-ZZd} and  hybrid jump/diffusion kinetic samplers with a numerical integrator in Section~\ref{Section-Schema-Euler}. Although related by their motivation (the understanding of sampling with kinetic walks), the four sections are sufficiently independent to be read separately one from the other. The definition of the Zig-Zag walk associated to a general potential  and all the results on this topic in Section~\ref{Section-Dim1} and Section \ref{subSect-ZZd}  are new. The specific numerical scheme introduced in Section~\ref{Section-Schema-Euler} is also new, although straightforwardly obtained from the well-known general method of Strang splittings.

\bigskip

\noindent \textbf{Notations.} If $x,y\in\R^d$, we denote $x\cdot y$ their scalar product and $|x|=\sqrt{x\cdot x}$. The Dirac mass at $x$ is denoted $\delta_x$ and $\1_A$ is 1 if $A$ and 0 else. For $r\in\R$, $(r)_+ = \max(r,0)$. The set of $\mathcal C^k$ functions on $\R^d$ with compactly supported supported is denoted $\mathcal C^k_c(\R^d)$. The Gaussian distribution on $\R^d$ with mean $m$ and variance $\Sigma^2$ is denoted $\mathcal N(m,\Sigma^2)$. We denote respectively $\mathcal P(E)$, $\mathcal M(E)$ and $\mathcal M_b(E)$   the sets of probability measures,  measurable functions and bounded measurable functions on a measurable space $E$, and for $\mu\in\mathcal P(E)$ and $f\in L^1(\mu)$ we write $\mu f = \mu (f) = \int f\dd \mu$.  When $(X_t^\varepsilon)_{t\geqslant 0}$ for $\varepsilon>0$ and $(Y_t)_{t\geqslant 0}$ are  c\'adl\'ag processes on  $\R^d$, we write $(X_t^\varepsilon)_{t\geqslant 0}\overset{law} {\underset{\varepsilon \rightarrow 0}\longrightarrow} (Y_t)_{t\geqslant 0}$ for the convergence in law in the Skorohod topology. Recall a sequence $(x_n)_{n\in\N}$ of càdlàg functions from $\R_+$ to $\R^d$ is said to converge to $x$ if, on all finite time interval, it converges uniformly up to a uniformly small change of time, i.e. if there exists a sequence $(\gamma_n)_{n\in\N}$ with $\gamma_n : \R_+ \rightarrow \R_+$ increasing so that $\sup_{t\in[0,T]}(|x_n(\gamma_n(t))-x(t)|+|\gamma_n(t)-t|)\rightarrow 0$ as $n\rightarrow \infty$ for all $T>0$.


\section{The Zig-Zag walk on $\Z$}\label{Section-Dim1}

Let $U:\Z \rightarrow \R$ be such that $\mathcal Z = \sum_{x\in\Z} \exp(-U(x)) < +\infty$,  $\pi(x) = \exp(-U(x))/\mathcal Z$ be the associated Gibbs distribution and $\mu(x,v) = \pi(x)/2$ for $v\in\{-1,1\}$ and $x\in\Z$. We consider the Markov chain $(X_k,V_k)_{k\in\N}$ on $\Z\times\{-1,1\}$ with transitions
\[(X_{k+1},V_{k+1}) \ = \ \left\{\begin{array}{ll}
(X_k + V_k,V_k) & \text{with probability }\min\po \frac{\pi(X_k+V_k)}{\pi(X_k)},1\pf\\
(X_k,-V_k) & \text{else,}
\end{array}\right.\]
which we call the  Zig-Zag walk on $\Z$. This transition can be seen as the composition of two Markov transitions. Indeed, consider on $\Z\times\{-1,1\}$ the Markov kernel given by $x,v\mapsto \delta_{(x+v,-v)}$. Since $(y,w)=(x+v,-v)$ implies that $(x,v)=(y+w,-w)$, this kernel is symmetric. If a Metropolis accept/reject step with target measure $\mu$ is added, the transition of the resulting chain is simply
\[(Y_{k+1},W_{k+1}) \ = \ \left\{\begin{array}{ll}
(Y_k + W_k,-W_k) & \text{with probability }\min\po \frac{\pi(Y_k+W_k)}{\pi(Y_k)},1\pf\\
(Y_k,W_k) & \text{else.}
\end{array}\right.\]
By construction of the Metropolis-Hastings algorithm, this transition leaves $\mu$ invariant. Now if we compose this transition with the deterministic transition $(Y_{k+1},W_{k+1}) = (Y_k,-W_k)$, which obviously leaves $\mu$ invariant, we obtain the initial chain. We have thus obtained that $\mu$ is invariant for the Zig-Zag walk. Note however that, although both intermediate transition kernels are reversible with respect to $\mu$, their composition is not. Indeed, $\mathbb P((X_{2},V_2)=(X_0,V_0))=0$ for all initial condition.

The chain is clearly irreducible
, and it is periodic. Indeed,  $(-1)^{X_k}V_k= (-1)^{k+X_0}V_0$ for all $k\in\N$, so that, if $X_k$ is even with $V_k=1$ or $X_k$ is odd with $V_k=-1$, then $X_{k+1}$ is odd with $V_{k+1}=1$ or $X_{k}$ is even with $V_{k+1}=-1$. In particular, the period is even. If $U$ admits a strict local minimum $x_0$ then there is a path of length 2 with strictly positive probability from $(x_0,1)$ to itself (which is $(x_0,1)\rightarrow (x_0,-1)\rightarrow (x_0,1)$), so that the period is exactly 2, but this may not be the case in general. For instance, with $U(k) = |\lfloor k/2\rfloor|$, the reader can check that the period is 4. 

In the following, we prove an ergodic Law of Large Number and a Central Limit Theorem (CLT) in Theorem \ref{Thm-ZZ1-CLT} (in the unimodal case; other cases are treated in any dimension in Section \ref{SubSectionTCLZZZd}), an Eyring-Kramers formula in Theorem~\ref{Thm-ZZ1-EK} and the convergence toward the continuous Zig-Zag process in Theorem~\ref{Thm-Dim1-scaling}.


\subsection{Asymptotic results}

Although the chain is already quite simple, let us focus for now on the case where $\pi$ is unimodal. In that case, and similarly to the continuous-time case \cite{BierkensDuncan}, ergodicity  can be established through elementary considerations on renewal chains.

\begin{thm}\label{Thm-ZZ1-CLT}
Suppose that $U$ is decreasing on $\ccf -\infty,0\ccf$ and increasing on $\cco 0,+\infty\cco$, and let $f\in L^1(\mu)$. Then, for all initial conditions, almost surely,
\[\frac1n\sum_{k=0}^{n-1} f(X_k,V_k) \ \underset{n\rightarrow +\infty}\longrightarrow \ \mu(f)\,.\]
Moreover, denoting $g(x)=f(x,1)+f(x,-1)$ and
\[F(x) = \frac12 g(x) + \1_{x\geqslant 1}\sum_{i=1}^{x-1} g(i) + \1_{x\leqslant -1}\sum_{i=x+1}^{-1} g(i)\,,\]
suppose that $M_f:=\mathbb E_\pi(g(X)F(X))<\infty$ and that $\mu(f)=0$. Then
\[\frac1{\sqrt{n}}  \sum_{k=0}^{n-1} f(X_k,V_k)  \  \overset{law} {\underset{n\rightarrow +\infty}\longrightarrow}\ \mathcal N(0,\sigma_f^2)\,, \]
with some explicit variance $\sigma_f^2 \leqslant 3M_f$.
\end{thm}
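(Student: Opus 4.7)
The plan is to exploit the regenerative structure of the chain with atom $a=(0,1)$. Under the unimodality hypothesis a full excursion from $a$ back to $a$ decomposes, by the strong Markov property, into two consecutive and independent half-excursions: a ``right half'' that ends the first time the chain reaches $(0,-1)$, and a ``left half'' that ends at the next visit to $a$. Each half-excursion is either a single step (direct flip at the origin) or a genuine excursion that climbs to a random extremum $M_+\geqslant 1$ (resp.\ $M_-\geqslant 1$) and then deterministically descends, since once the velocity points toward the origin the Metropolis acceptance probability is $1$. Reading off the acceptance rule, each upward step from $i$ to $i+1$ succeeds with probability $\pi(i+1)/\pi(i)$, so the telescoping product gives $\mathbb P(M_+\geqslant m)=\pi(m)/\pi(0)$ for $m\geqslant 1$ (and symmetrically for $M_-$); in particular both half-excursions have integrable length, giving $\mathbb E_a[\tau]=1/\mu(a)=2/\pi(0)<\infty$. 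Positive recurrence of $a$ then yields the a.s.\ convergence $n^{-1}\sum_{k<n}f(X_k,V_k)\to \mu(f)$ for all $f\in L^1(\mu)$ by the ergodic theorem for chains with a recurrent atom.

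For the CLT I would use the classical renewal CLT on sums between consecutive visits to $a$, for which the statement reduces to the square-integrability of $S:=\sum_{k=0}^{\tau-1}f(X_k,V_k)$ (the hypothesis $\mu(f)=0$ and Kac's formula automatically give $\mathbb E_a[S]=0$). During a right excursion every intermediate site $i\in\{1,\dots,M_+\}$ is visited once with each velocity and thus contributes $g(i)=f(i,1)+f(i,-1)$, and the same holds on the left, so
\[S \ = \ g(0) \ + \ \1_{M_+\geqslant 1}\sum_{i=1}^{M_+}g(i) \ + \ \1_{M_-\geqslant 1}\sum_{i=1}^{M_-}g(-i).\]
Expanding the square of the right-excursion term and reorganising the double sum by the larger index,
\[\mathbb E_a\left[\left(\1_{M_+\geqslant 1}\sum_{i=1}^{M_+}g(i)\right)^{\!2}\right] \ = \ \sum_{i,j\geqslant 1}g(i)g(j)\,\mathbb P(M_+\geqslant\max(i,j)) \ = \ \frac{2}{\pi(0)}\sum_{j\geqslant 1}\pi(j)g(j)F(j),\]
where the identity $\sum_{i=1}^{j-1}g(i)+g(j)/2=F(j)$ is exactly what recovers the function $F$ of the statement; the same formula holds on the left. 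This is the step where $F$ arises naturally as the ``half-excursion variance kernel''.

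Finally, applying $(a+b+c)^2\leqslant 3(a^2+b^2+c^2)$ to the three pieces of $S$ and using $F(0)=g(0)/2$ to absorb the boundary term via $g(0)^2=2g(0)F(0)$, the three squared expectations collapse to $\mathbb E_a[A^2]+\mathbb E_a[B^2]+\mathbb E_a[C^2]=(2/\pi(0))M_f$, so $\mathbb E_a[S^2]\leqslant (6/\pi(0))M_f$; dividing by $\mathbb E_a[\tau]=2/\pi(0)$ gives the renewal CLT variance $\sigma_f^2=\mathbb E_a[S^2]/\mathbb E_a[\tau]\leqslant 3M_f$. I do not expect a deep obstacle; the main care will be in the bookkeeping at the origin: checking that the atom is counted exactly once inside $S$, that $M_+$ and $M_-$ are genuinely independent (strong Markov property at $(0,-1)$), and that the boundary contribution at $0$ combines with $F(0)=g(0)/2$ so that the three a priori distinct squared terms collapse into the single multiple of $M_f$ that produces the clean constant $3$.
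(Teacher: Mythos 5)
Your proposal is correct and follows essentially the same route as the paper: regeneration at the atom $(0,1)$, the explicit geometric-type tail $\mathbb P(M_+\geqslant m)=\pi(m)/\pi(0)$ for the excursion maximum, the block second-moment computation that produces $F$, the $(a+b+c)^2\leqslant 3(a^2+b^2+c^2)$ bound giving $\sigma_f^2\leqslant 3M_f$, and normalisation by $\mathbb E_a[\tau]=2/\pi(0)$. The only differences are cosmetic (you invoke the standard atomic-chain ergodic theorem and renewal CLT where the paper redoes the renewal sandwich argument and cites Bierkens--Duncan), so nothing further is needed.
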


\begin{proof}
Consider first the case where $(X_0,V_0)=(0,1)$ and denote $T_1 = \inf\{n\in\N\ :\ X_{n+1}=X_n\}$, $T_2 = \inf\{n\in\N\ :\ X_{2T_1+n+2}=X_{2T_1+1+n}\}$. The monotonicities of $U$ implies that almost surely $X_n$ increases for $n\in\cco 0,T_1\ccf$, decreases for $n\in\cco T_1+1,2T_1+1+T_2\ccf$ with $X_{2T_1+1} = 0$, and finally, denoting $S_1 = 2(T_1+T_2+1)$, increases for $n\in\cco 2T_1+2+T_2,S_1\ccf$ with $(X_{S_1},V_{S_1}) = (0,1)$ (cf. Fig. \ref{figTCL}). Remark that
\[\mathbb P\po T_1 \geqslant k\pf \ = \ \prod_{j=0}^{k-1} e^{U(j) - U(j+1)} \ = \ e^{U(0) - U(k)} \,,\]
so that $T_1 < \infty$ almost surely (since we assumed that $\mathcal Z <\infty$, $U$ necessarily goes to $\infty$ at $\infty$). The same goes for $T_2$, hence for $S_1$.  By the strong Markov property, $(X_n,V_n)_{n\geqslant S_1}$ has the same law as $(X_n,V_n)_{n\in\N}$ and is independent from $(X_n,V_n)_{n\in\cco 0,S_1-1\ccf}$.   Denote $S_0=0$ and, for all $n\in\N$, $S_{n+1} = \inf\{k>S_n\ : \ (X_k,V_k)=(0,1)\}$ and, given a function $f\in L^1(\mu)$,
\[A_n \ = \ \sum_{k=S_n}^{S_{n+1}-1} f(X_k,V_k)\,.\]
The $A_n$'s are i.i.d. and
\begin{eqnarray*}
\mathbb E \po \left|\sum_{k=0}^{2T_1+1} f(X_k,V_k)\right |\pf & \leqslant & \sum_{j\in\N} \sum_{k\leqslant j} \mathbb P\po T_1 = j\pf \po |f(k,1)|+|f(k,-1)|\pf \\
& = & \sum_{k\in\N} e^{U(0) - U(k)} \po |f(k,1)|+|f(k,-1)|\pf \ < \ +\infty\,.
\end{eqnarray*}
The sum for $k\in\cco 2 T_1+2,S_1-1\ccf$ is treated the same way, so that $\mathbb E|A_0| < \infty$ and 
\[\mathbb E\po A_0\pf \ =\  \sum_{k\in\Z} e^{U(0) - U(k)} \po f(k,1)+f(k,-1)\pf \ = \ \lambda \mu(f)\]
with $\lambda = 2 e^{U(0)}\mathcal Z$. The proof then follows from classical renewal arguments, which we recall for completeness. Considering the case $f=1$, the law of large numbers implies that $S_n/n$ converges almost surely toward $\lambda$ as $n$ goes to infinity. For $n\in\N$ set $K(n) = \sup\{k\in\N\ : \ S_k \leqslant n\}$.  If $f$ is positive then for all $n\in\N$,
\[\frac{1}{n}\sum_{j=0}^{K(n)} A_j \ \leqslant \ \frac{1}{n}\sum_{k=0}^{n-1} f(X_k,V_k)  \ \leqslant \ \frac{1}{n}\sum_{j=0}^{K(n)+1} A_j \,.\]
Applied with $f=1$, this reads
\[\frac{K(n)}{n}\times \frac{S_{K(n)}}{K(n)} \ \leqslant \ 1 \ \leqslant \ \frac{K(n)+1}{n}\times \frac{S_{K(n)+1}}{K(n)+1}\,. \]
Since $K(n)$ almost surely goes to infinity with $n$, we get that $K(n)/n$ almost surely converges to $1/\lambda$. Applied again with a general positive $f$, now,
\[\frac{K(n)}{n}\times \frac{1}{K(n)}\sum_{j=0}^{K(n)} A_j \ \leqslant \ \frac{1}{n}\sum_{k=0}^{n-1} f(X_k,V_k)  \ \leqslant \ \frac{K(n)+1}{n}\times \frac{1}{K(n)+1}\sum_{j=0}^{K(n)+1} A_j \,,\]
and letting $n$ go to infinity concludes. If $f$ is not positive, the same conclusion follows from the decomposition $f=(f)_+ - (-f)_+$. 

\begin{figure}
\begin{center}
\includegraphics[scale=0.4]{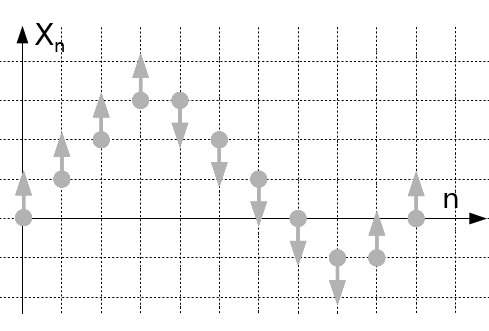}
\caption{The trajectory between times $S_{0}$ and $S_{1}$.}\label{figTCL}
\end{center}
\end{figure}

Now, consider the case of any general initial condition $(X_0,V_0)=(x,v)$, and let $R = \inf\{n\in\N\ :\ (X_n,V_n) = (0,1)\}$. By similar arguments as above, $R<\infty$ almost surely so that $n^{-1}\sum_{k< R} f(X_k,V_k)$ almost surely goes to zero, while by the Markov property, $n^{-1} \sum_{R\leqslant k \leqslant n} f(X_k,V_k)$ converges toward $\mu(f)$, which concludes.

The proof of the CLT is similar, and we refer to \cite[Lemma 4]{BierkensDuncan} to get that, if $\mu(f)=0$,
\[\sqrt{\frac{\lambda}{n}}\sum_{j=0}^{K(n)}  A_j  \ \overset{law}{\underset{n\rightarrow\infty}\longrightarrow} \ \mathcal N(0,\sigma_A^2)\,,\]
provided that $\sigma_A^2 := \mathbb E\po A_0^2\pf  <\infty$. Now, even if $(X_0,V_0)\neq(0,1)$, as before,
\[\frac1{\sqrt n} \po \sum_{k=0}^{n-1}  f(X_k,V_k)  - \sum_{k=0}^{K(n)}  A_k\pf \ \overset{\mathbb P}{\underset{n\rightarrow\infty}\longrightarrow}  \ 0\,,\]
hence, provided that $\sigma_A^2 <\infty$,
\[\frac1{\sqrt n} \sum_{k=0}^{n-1}  f(X_k,V_k) \ \overset{law}{\underset{n\rightarrow\infty}\longrightarrow} \ \mathcal N(0,\sigma_A^2/\lambda)\,.\]
  Decompose $A_0=f(0,1)+A_0'+f(0,-1)+A_0''$ with
\[A_0' \ = \ \sum_{k=1}^{2T_1} f(X_k,V_k)\,,\qquad A_0'' \ = \ \sum_{k=2T_1+2}^{S_1-1} f(X_k,V_k)\,,\]
and remark that by the Markov property, $A_0''$ is independent from $A_0$. Compute
\begin{eqnarray*}
\mathbb E\po (A_0')^2\pf & = & \sum_{k\in\N_*} \mathbb P (T_1=k) \sum_{i=1}^k\sum_{j=1}^k g(i)g(j)\\
& = & \sum_{i\in\N_*}\sum_{j\in\N_*} g(i)g(j)  \mathbb P (T_1\geqslant i\vee j) \\
& = &   \sum_{i\in\N_*} g(i) e^{U(0)-U(i)} \po g(i) +2\sum_{j=1}^{i-1}g(j)\pf\\
& = & \lambda\sum_{i\in\N_*} g(i) F(i) \pi(i)  \,.
\end{eqnarray*}
 The case of $A_0''$ is similar and, using that $1/\lambda = \pi(0)/2$, we get
 \begin{eqnarray*}
\frac1{\lambda}\mathbb E\po A_0^2\pf &\leqslant & \frac3{\lambda}\mathbb E\po (A_0')^2 + (A_0'')^2 + g^2(0)\pf \ = \ 3 \sum_{x\in\Z} g(x) F(x) \pi(x)\,, 
\end{eqnarray*}
which concludes. In fact $\sigma_f^2=\sigma_A^2/\lambda$ can be computed since $\mathbb E(A_0') = \lambda \sum_{x\in\N_*} g(x)\pi(x)$, and similarly for $A_0''$.
\end{proof}

For $N\in\N_*$ and $t_1<\dots<t_N$, considering $K(t_i) = \sup\{k\in\N\ : \ S_k \leqslant \lfloor n t\rfloor  \}$ and decomposing $\sum_{n=1}^{K(t_N)-1} A_n = \sum_{i=1}^N \sum_{n=K(t_i)}^{K(t_{i+1})-1} A_n$, the previous elementary proof is easily extended to obtain a functional CLT, namely the convergence
\[\po \frac1{\sqrt{n}}\sum_{k=0}^{\lfloor n t\rfloor}\po  f(X_k,V_k) - \mu(f)\pf \pf_{t\geqslant 0}\  \overset{law} {\underset{n\rightarrow +\infty}\longrightarrow}\ (\sigma_f B_t)_{t\geqslant0}\]
where $(B_t)_{t\geqslant0}$ is a one-dimensional Brownian motion. See also Section \ref{SubSectionTCLZZZd}.

\subsection{Metastability}

We now consider the question of escape times from local minima at low temperature, as in \cite{MonmarcheRTP} for the Zig-Zag process on $\R$. Recall that a random variable $G$ on $\R$ is said to be stochastically larger than a random variable $F$ on $\R$  if $\mathbb P\po G<t\pf \leqslant \mathbb P\po F<t\pf$ for all $t\in\R$. In that case we write $F\overset{sto}{\leqslant} G$. 

\begin{thm}\label{Thm-ZZ1-EK}
Let $U:\mathbb Z\rightarrow \R$ and, for all $\varepsilon>0$, let $(X^\varepsilon_k,Y^\varepsilon_k)_{k\in\N}$ be the persistent walk on $\Z$ associated to $U/\varepsilon$ and with  initial condition $(0,1)$. Suppose that $U(0)=0$ and that $U$ is decreasing on $\ccf -\infty,0\ccf$ and increasing on $\cco 0,+\infty\cco$. Let $a<\alpha\leqslant 0 \leqslant \beta < b$ be such that  $\cco \alpha,\beta\ccf = \{k\in\Z,\ U(k)= 0\}$, and let
\[\tau_\varepsilon \ = \ \inf\{n\in\N,\ X_n^\varepsilon \notin \ccf a,b\cco\}\,.\] 
Then
\begin{eqnarray}\label{Eq-EyringKramer-thm}
\mathbb E \po \tau_\varepsilon \pf &=& e^{E_1/\varepsilon  }\po \frac{2(\beta-\alpha+1)}{1 + \1_{U(a)=U(b)} }  + \underset{\varepsilon\rightarrow 0}{\mathcal O}\po e^{-E_2/\varepsilon }\pf +  \1_{U(a)\neq U(b)} \underset{\varepsilon\rightarrow 0}{\mathcal O}\po e^{-E_3/\varepsilon }\pf \pf\,,
\end{eqnarray}
with $E_1  = \min\po U(a), U(b)\pf$, $E_2 = \min\po U(\alpha-1), U(\beta+1)\pf $ and $E_3 = |U(a)-U(b)|$. Moreover, $\tau_\varepsilon / \mathbb E(\tau_\varepsilon)$ converges in law as $\varepsilon$ vanishes to an exponential random variable with parameter 1, and
\[\mathbb P(X_{\tau_\varepsilon}^\varepsilon=a) \  \underset{\varepsilon\rightarrow 0}\longrightarrow \  \frac12 \po 1 + \1_{U(a)\leqslant U(b)} - \1_{U(b)\leqslant U(a)}\pf\,.\]
Finally, for all $\varepsilon>0$, 
\[2(\beta-\alpha+1)\po  G_\varepsilon - 1\pf \ \overset{sto}{\leqslant} \ \tau_\varepsilon \ \ \overset{sto}{\leqslant} \  2(b-a+1) G_\varepsilon\]
where $G_\varepsilon$ is a geometric random variable with  parameter
\[ e^{ - U(b)/\varepsilon} +  e^{   - U(a)/\varepsilon} - e^{  -\po  U(b) + U(a)\pf/\varepsilon} \,.\]
\end{thm}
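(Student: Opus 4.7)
The plan is to extract a renewal cycle structure anchored at the state $(0,1)$. Since $U$ is constant on the plateau $\cco\alpha,\beta\ccf$, every move inside the plateau is accepted, so the trajectory is piecewise deterministic outside two types of stochastic ``attempts'': a right excursion beginning at $(\beta,1)$ and a left excursion beginning at $(\alpha,-1)$. I take a cycle to be the trajectory from one visit to $(0,1)$ to the next, or until the exit time $\tau_\varepsilon$. The exit probability of a right excursion equals the telescoping product of acceptance probabilities along the direct ascent $\beta\to\beta+1\to\cdots\to b$, namely $\prod_{k=\beta}^{b-1}\min(1,e^{(U(k)-U(k+1))/\varepsilon})=e^{-U(b)/\varepsilon}=:p_b$; symmetrically a left excursion exits with probability $p_a=e^{-U(a)/\varepsilon}$. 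Hence the per-cycle exit probability is $p_\varepsilon=p_b+(1-p_b)p_a$, and by the strong Markov property the number $G_\varepsilon$ of cycles up to and including the exit cycle is geometric of parameter $p_\varepsilon$.

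Let $T_R^{\mathrm{fail}}$, $T_L^{\mathrm{fail}}$ denote the lengths of failed right, respectively left, excursions. A failed cycle decomposes into two plateau crossings of total length $2(\beta-\alpha)$ plus $T_R^{\mathrm{fail}}+T_L^{\mathrm{fail}}$. Since $T_R^{\mathrm{fail}}\in\cco 1, 2(b-\beta-1)+1\ccf$ (the walk bounces either immediately at $\beta+1$ or as high as $b-1$) and analogously for $T_L^{\mathrm{fail}}$, a failed cycle has length in $\cco 2(\beta-\alpha+1),\ 2(b-a-1)\ccf$, and an exit cycle is also bounded above by $2b-a-1\leqslant 2(b-a-1)$. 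Summing over $G_\varepsilon$ cycles therefore yields the stochastic sandwich $2(\beta-\alpha+1)(G_\varepsilon-1)\leqslant\tau_\varepsilon\leqslant 2(b-a+1)G_\varepsilon$ at once. For the mean, a standard Wald-type resummation using $L_i\perp\mathbbm{1}_{i\leqslant G_\varepsilon}$ gives $\mathbb E[\tau_\varepsilon]=\mathbb E[L]/p_\varepsilon$. A failed excursion has length $1$ unless the walk accepts its first uphill move (probability $e^{-U(\beta+1)/\varepsilon}$ on the right or $e^{-U(\alpha-1)/\varepsilon}$ on the left), and absorbing $O(p_a+p_b)=O(e^{-E_1/\varepsilon})$ into $O(e^{-E_2/\varepsilon})$ (since $E_2\leqslant E_1$) yields $\mathbb E[L]=2(\beta-\alpha+1)+O(e^{-E_2/\varepsilon})$. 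Dividing by $p_\varepsilon$ and expanding in the three cases $U(a)<U(b)$, $U(a)>U(b)$, $U(a)=U(b)$---using $1/p_\varepsilon=e^{E_1/\varepsilon}(1+O(e^{-E_3/\varepsilon}))$ in the asymmetric cases and $1/p_\varepsilon=\tfrac12 e^{E_1/\varepsilon}(1+O(e^{-E_1/\varepsilon}))$ in the symmetric one---then yields (\ref{Eq-EyringKramer-thm}).

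The exponential-law limit follows from the standard fact $p_\varepsilon G_\varepsilon\overset{law}{\longrightarrow}\mathrm{Exp}(1)$ for geometrics with vanishing parameter, combined via Slutsky with the weak law $G_\varepsilon^{-1}\sum_{i=1}^{G_\varepsilon} L_i\overset{\mathbb P}{\longrightarrow}2(\beta-\alpha+1)$; the factor $1+\mathbbm{1}_{U(a)=U(b)}$ cancels correctly between $\mathbb E[\tau_\varepsilon]$ and $2(\beta-\alpha+1)/p_\varepsilon$ in both regimes. Finally $\mathbb P(X_{\tau_\varepsilon}^\varepsilon=a)=(1-p_b)p_a/p_\varepsilon$ by construction (exit on the left requires first failing the right attempt within the terminal cycle), and the three limiting values reduce to elementary asymptotics of this ratio. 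The main obstacle I expect is bookkeeping of the three error orders $O(e^{-E_1/\varepsilon})$, $O(e^{-E_2/\varepsilon})$, $O(e^{-E_3/\varepsilon})$ through the division by $p_\varepsilon$: in the symmetric case $U(a)=U(b)$ one has $p_\varepsilon\sim 2e^{-E_1/\varepsilon}$ rather than $\sim e^{-E_1/\varepsilon}$, which halves the Eyring--Kramers prefactor, and preserving the $O(e^{-E_2/\varepsilon})$ remainder requires writing the first two terms in the expansion of $1/p_\varepsilon$ explicitly.
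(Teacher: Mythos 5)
Your proposal is correct and follows essentially the same route as the paper: renewal cycles anchored at $(0,1)$, a geometric number of cycles with per-cycle exit probability $e^{-U(b)/\varepsilon}+(1-e^{-U(b)/\varepsilon})e^{-U(a)/\varepsilon}$, deterministic bounds on cycle lengths giving the stochastic sandwich, a Wald-type decomposition for the mean with the error bookkeeping through $E_1,E_2,E_3$, and geometric-to-exponential convergence plus Slutsky for the limit law and the exit-side ratio. The only imprecision is that $\mathbb E[\tau_\varepsilon]=\mathbb E[L]/p_\varepsilon$ is not an exact identity (the terminal cycle is truncated at $\tau_\varepsilon$ and only $G_\varepsilon-1$ cycles are complete failed ones), but the discrepancy is bounded by $2(b-a+1)=e^{E_1/\varepsilon}\,\mathcal O(e^{-E_2/\varepsilon})$, precisely the correction the paper treats explicitly through the term $\tau_\varepsilon-S_{K-1}$, so your conclusions are unaffected.
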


\begin{proof}
The proof is similar to the one appearing in \cite{MonmarcheRTP}. To alleviate notations, we only write $X_k,V_k$ and $\tau$ for $X_k^\varepsilon,V_k^\varepsilon$ and $\tau_\varepsilon$. Like in the previous proof, set $S_0=0$ and, by induction, $S_{n+1} = \inf\{k>S_n,\ (X_k,V_k)=(0,1)\}$. For all $n\in\N$, let $\tilde S_n = \inf\{k>S_n,\ (X_k,V_k)=(0,-1)\}$, and let $K=\inf\{n\in\N,\ S_n>\tau\}$. Keep Figure \ref{figTCL} in mind. By the strong Markov property, $K$ follows a geometric distribution with parameter
\begin{eqnarray}
p  \ := \  \mathbb P \po \tau  < S_1\pf \notag
& = & \mathbb P \po \tau  < \tilde S_0\pf + \mathbb P \po \tau  >\tilde S_0\pf \mathbb P \po \tau  <   S_1 | \tau  > \tilde S_0\pf \notag\\
& = & e^{ - U(b)/\varepsilon} + \po 1 - e^{   - U(b)/\varepsilon}\pf e^{   - U(a)/\varepsilon} \notag\\
& = & e^{-E_1/\varepsilon} \po 1 + \1_{U(a)=U(b)} + \underset{\varepsilon\rightarrow 0}{\mathcal O}\po e^{-E_1/\varepsilon }\pf  + \1_{U(a)\neq U(b)}  \underset{\varepsilon\rightarrow 0}{\mathcal O}\po e^{-E_3/\varepsilon }\pf  \pf \notag \,,
\end{eqnarray}
and 
\[\mathbb P \po X_\tau = b \pf \ = \  \mathbb P \po X_\tau = b \ | \ \tau < S_1 \pf \ = \  \frac1p  e^{ - U(b)/\varepsilon}\,,\]
which indeed converges as $\varepsilon$ vanishes to 0 if $U(a)<U(b)$, 1 if $U(a)>U(b)$ and $1/2$ if $U(a)=U(b)$. Decomposing $\tau\ = \ \tau - S_{K-1} + \sum_{i=1}^{K-1} (S_i - S_{i-1})$, remark that almost surely $2(\beta-\alpha+1) \leqslant S_{i}-S_{i-1}\leqslant 2(b-a+1)$ for all $i<K$ and $\tau - S_{K-1} < 2(b-a+1)$, which proves the last claim of the theorem. Besides, again by the strong Markov property, conditionally to $K$, $(S_i-S_{i-1})_{i\in\cco 1,K-1\ccf}$ are i.i.d. random variables independent from $K$, so that
\begin{eqnarray}\label{Eq-EyringKramer1}
\mathbb E(\tau)& = & \mathbb E\po \tau - S_{K-1}\pf + \mathbb E(K) \mathbb E \po S_1 \ |\ \tau>S_1\pf\,.
\end{eqnarray} 
Since  $0 \leqslant \tau-S_{K-1} \leqslant 2(b-a)$ almost surely, 
\begin{eqnarray}\label{Eq-EyringKramer2}
\left|\mathbb E(\tau-S_{K-1})\right| \ \leqslant \ 2(b-a) \ = \ e^{E_1/\varepsilon  } \underset{\varepsilon\rightarrow 0}{\mathcal O}\po e^{-E_2/\varepsilon }\pf \,,
\end{eqnarray}
where we used that $E_1\geqslant E_2$. For $\varepsilon$ small, the most likely trajectory of the process between times $0$ and $S_1$ is the following: starting from $(0,1)$, it deterministically goes to $(\beta,1)$, then jumps to $(\beta,-1)$ with high probability, then deterministically goes to $(\alpha,-1)$ and jumps to $(\alpha,1)$ with high probability before going back to $(0,1)$ deterministically. More precisely, $S_1\geqslant 2(\beta - \alpha +1)$ almost surely, and
\begin{eqnarray*}
\mathbb P\po S_1= 2(\beta - \alpha +1)\pf 
& = & \po 1 - e^{ \po U(\alpha)-U(\alpha-1)\pf/\varepsilon}\pf \po 1 - e^{ \po U(\beta)-U(\beta+1)\pf/\varepsilon}\pf  \\
& = & 1 + \underset{\varepsilon\rightarrow 0}{\mathcal O}\po e^{-E_2/\varepsilon} \pf \,.
\end{eqnarray*}
On the other hand, conditionally to $\tau>S_1$, almost surely, $S_1 \leqslant 2(b-a)$, so that
\[\mathbb E \po S_1 \1_{\tau>S_1>2(\beta-\alpha +1)}\pf \ \leqslant \ 2(b-a) \mathbb P\po S_1\neq  2(\beta - \alpha +1)\pf \ =\  \underset{\varepsilon\rightarrow 0}{\mathcal O}\po e^{-E_2/\varepsilon} \pf \,.\]
Thus, we get that
\[\mathbb E \po S_1 \ |\ \tau>S_1\pf \ = \ \frac1{1-p}\mathbb E \po S_1 \1_{\tau>S_1}\pf\ = \  2(\beta-\alpha+1)  + \underset{\varepsilon\rightarrow 0}{\mathcal O}\po e^{-E_2/\varepsilon }\pf\,,\]
where we used again that $E_1\geqslant E_2$. Using in \eqref{Eq-EyringKramer1} this estimate together with \eqref{Eq-EyringKramer2} and the fact that $\mathbb E(K) = 1/p$ concludes the proof of the Eyring-Kramers formula \eqref{Eq-EyringKramer-thm}.

Finally, $K$ being an exponential random variable whose parameter vanishes with $\varepsilon$, $pK$ converges in law toward an exponential random variable with parameter 1. By the Markov inequality, for any $\delta>0$,
\begin{eqnarray*}
\mathbb P \po \left|\frac1K \sum_{i=1}^{K-1} (S_i - S_{i-1}) - \mathbb E\po S_1\ |\ \tau>S_1\pf\right|>\delta\pf & \leqslant &\frac1{\delta^2} \mathbb E\po S_1^2\ |\ \tau>S_1\pf\ \mathbb E\po \frac1K\pf\\
& \leqslant & \frac{4(b-a)^2}{\delta^2} \mathbb E\po \frac1K\pf \ \underset{\varepsilon\rightarrow0}\longrightarrow \ 0\,.
\end{eqnarray*} 
Since $\mathbb E\po S_1\ |\ \tau>S_1\pf$  and $p\mathbb E(\tau) $ both converges toward $2(\beta-\alpha+1)$ as $\varepsilon$ vanishes,
\[\frac1{Kp\mathbb E(\tau)} \sum_{i=1}^{K-1} (S_i - S_{i-1}) \ \underset{\varepsilon\rightarrow 0}{\overset{\mathbb P}{\longrightarrow}} \ 1\,. \]
From Slutsky's theorem, $\sum_{i=1}^{K-1} (S_i - S_{i-1}) /\mathbb E(\tau)$ converges in law to an exponential random variable with parameter 1 as $\varepsilon\rightarrow 0$. Finally, $|(\tau- S_{K-1})/\mathbb E(\tau)| \leqslant 2(b-a)/\mathbb E(\tau)$ almost surely goes to zero, which concludes.

\end{proof}

The simulated annealing chain obtained by considering a non-constant temperature $(\varepsilon_k)_{k\in\N}$ and a potential $U$ with possibly several local minima could also be studied by similar arguments as in \cite[Theorem 3.1]{MonmarcheRTP} to get a necessary and sufficient condition on the cooling schedule for convergence in probability toward the global minima of $U$.

\subsection{Continuous scaling limit}

The (continuous-time) Zig-Zag process on $\R$ associated to a potential $H\in\mathcal C^1(\R)$ (also known as the (integrated) telegraph or run-and-tumble process) is the  Markov process on $\R\times\{-1,1\}$ with generator
\[L f(y,w) \ = \ w\partial_y f(y,w) + \po w H'(x)\pf_+ \po f(y,-w) - f(y,w)\pf\,.\]
In other words, it is a piecewise deterministic Markov process that, starting from an initial condition $(y,w)$, follows the flow $(Y_t,W_t) = (y+tw,w)$ up to a random time $T$ with distribution $\mathbb P(T>t) = \exp(-\int_0^t (wH'(y+sw))_+ \dd s)$, at which point $(Y_T,W_T)=(y+Tw,-w)$, after which it follows again the deterministic flow up to a new random jump time, etc. 

\begin{thm}\label{Thm-Dim1-scaling}
For $H\in\mathcal C^2(\R)$ that goes to infinity at infinity, for all $\varepsilon>0$, define $U_\varepsilon : \Z\mapsto \R$ by $U_\varepsilon(k) = H(\varepsilon k)$ for all $k\in\Z$. Let $(X^\varepsilon_k,V^\varepsilon_k)_{k\in\N}$ be the persistent walk on $\Z$ associated to $U_\varepsilon$ and with some initial condition $(x_0^\varepsilon,v_0)$. Suppose that $\varepsilon x_0^\varepsilon$ converges to some $x_0^*\in\R$ as $\varepsilon$ vanishes. Then,
\[\po \varepsilon X^\varepsilon_{\lfloor t/\varepsilon\rfloor},V^\varepsilon_{\lfloor t/\varepsilon\rfloor}\pf_{t\geqslant 0} \ \overset{law} {\underset{\varepsilon \rightarrow 0}\longrightarrow}\ \po Y_{t},W_{t}\pf_{t\geqslant 0}\,,\]
where $(Y_t,W_t)_{t\geqslant 0}$ is a  Zig-Zag process on $\R$ associated to $H$ and with $(Y_0,W_0)=(x_0^*,v_0)$.
\end{thm}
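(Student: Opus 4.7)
The plan is to use a martingale-problem approach in the style of Ethier--Kurtz. The candidate limit is the PDMP characterised by the generator
\[Lf(y,w) \ = \ w\partial_y f(y,w) + (wH'(y))_+ (f(y,-w) - f(y,w)),\qquad f\in\mathcal C^1_c(\R\times\{-1,1\}),\]
for which the martingale problem is well-posed: the deterministic flow has unit speed, the jump rate is continuous, and the assumption $H\to\infty$ at infinity prevents any accumulation of jumps. Accordingly, it is enough to (i) prove tightness of $Z^\varepsilon_t := (\varepsilon X^\varepsilon_{\lfloor t/\varepsilon\rfloor}, V^\varepsilon_{\lfloor t/\varepsilon\rfloor})$ in $\mathcal D(\R_+,\R\times\{-1,1\})$, and (ii) show that every weak accumulation point solves the martingale problem for $L$ with initial condition $(x_0^*,v_0)$.

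For (ii) I would compute the one-step transition operator $P_\varepsilon$ of the rescaled chain, acting on functions on $\varepsilon\Z\times\{-1,1\}$:
\[P_\varepsilon f(y,w) \ = \ p_\varepsilon(y,w) f(y+\varepsilon w,w) + (1-p_\varepsilon(y,w)) f(y,-w),\qquad p_\varepsilon(y,w) \ = \ \min\po 1, e^{H(y)-H(y+\varepsilon w)}\pf.\]
For $f\in\mathcal C^2_c(\R\times\{-1,1\})$, a Taylor expansion of $f$ and $H$, justified by $H\in\mathcal C^2$, yields uniformly on compacts $(f(y+\varepsilon w,w)-f(y,w))/\varepsilon \to w\partial_y f(y,w)$, $p_\varepsilon(y,w)\to 1$, and $(1-p_\varepsilon(y,w))/\varepsilon \to (wH'(y))_+$. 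Combining these gives $\varepsilon^{-1}(P_\varepsilon-I)f \to Lf$ uniformly on any compact.

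Tightness of $Z^\varepsilon$ is essentially immediate. The position coordinate satisfies $|\varepsilon X^\varepsilon_{\lfloor t/\varepsilon\rfloor} - \varepsilon X^\varepsilon_{\lfloor s/\varepsilon\rfloor}| \leqslant |t-s|+\varepsilon$ since each step moves the walker by at most $\varepsilon$; together with $\varepsilon x_0^\varepsilon \to x_0^*$, this makes the family of positions tight in $C(\R_+,\R)$ via Arzelà--Ascoli. For the velocity, which lives in $\{-1,1\}$, it suffices by Aldous' criterion to bound the expected number of flips in a short time interval; the generator computation shows that the per-step flip probability is at most $\varepsilon \sup_{[-R,R]}|H'| + O(\varepsilon^2)$ on the event $\{\sup_{s\leqslant t}|Y^\varepsilon_s|\leqslant R\}$, and this event has probability close to $1$ uniformly in $\varepsilon$ by the Lipschitz bound on the position.

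The main obstacle I anticipate is the identification step. One writes the discrete martingale $f(Z^\varepsilon_t) - f(Z^\varepsilon_0) - \sum_{k=0}^{\lfloor t/\varepsilon\rfloor-1}(P_\varepsilon-I)f(Z^\varepsilon_{k\varepsilon})$, extracts a weakly convergent subsequence $Z^{\varepsilon_n}\to Z$, and must pass to the limit inside the discrete compensator to recover $\int_0^t Lf(Z_s)\dd s$. Two technical points need care: first, the generator convergence is only uniform on compacts, so one localises via a stopping time $\tau_R = \inf\{t : |Y^\varepsilon_t|>R\}$ and sends $R\to\infty$ at the end; second, $(wH'(y))_+$ is only Lipschitz (not $\mathcal C^1$) because of the positive part, but this is enough for the continuous-mapping argument that converts the Riemann sum into the integral. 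Once this is done, uniqueness of the martingale problem for $L$ yields the stated convergence in law in the Skorohod topology.
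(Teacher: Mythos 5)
Your proposal is correct in outline, but it takes a genuinely different route from the paper's proof of this theorem. The paper argues by an explicit, elementary coupling of skeleton chains: it first shows that the distribution function of the (rescaled) first flip time of the walk converges pointwise to that of the first flip time of the Zig-Zag process, then constructs the chains for all $\varepsilon$ simultaneously from one i.i.d.\ sequence of uniform variables via inverse-transform sampling, so that the successive flip times and flip positions converge almost surely; the full processes are then recovered as deterministic interpolations of their skeletons, and Skorohod convergence is checked by hand with an explicit time change $\gamma_\varepsilon$ matching the flip times. You instead run the standard martingale-problem programme: convergence of the rescaled one-step generators $\varepsilon^{-1}(P_\varepsilon-I)f\to Lf$, tightness, identification of limit points, and uniqueness for $L$. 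Your individual steps are sound, and two of your anticipated technicalities are in fact lighter than you suggest: since the walk moves by at most $\varepsilon$ per step, the position is \emph{deterministically} confined to $[x_0^*-T-1,x_0^*+T+1]$ on $[0,T]$, so no stopping-time localisation is needed, and for compactly supported $f$ the convergence $\varepsilon^{-1}(P_\varepsilon-I)f\to Lf$ is globally uniform. The real difference between the two routes is the input they require: the paper's construction is self-contained and never needs well-posedness of the limiting martingale problem, which is exactly the one ingredient you assert rather than prove (also, your stated reason for non-accumulation of jumps is slightly off: it is the local boundedness of $(wH'(y))_+$ together with unit speed that bounds the number of jumps on $[0,T]$, while the hypothesis that $H\to\infty$ serves to make each flip time almost surely finite). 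What your route buys is robustness and generality: it is essentially the strategy the paper itself uses in higher dimension (Theorem~\ref{Thm-Dimd-scaling}, via Theorem~\ref{Thm-Kallenberg} and Kallenberg's Theorem 17.28), where the missing uniqueness input is supplied by the fact that $\mathcal C^1_c$ is a core for the Zig-Zag generator, quoted from the literature; invoking that core property, or Kallenberg's theorem directly (which also dispenses with a separate tightness argument), is what would close the only genuine gap in your write-up.
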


\begin{proof}
Denote $T^\varepsilon_1 = \varepsilon\inf\{n\in\N\ : \ X_{n+1}^\varepsilon = X_n^\varepsilon\}$. Its cumulative function is 
\[F_{x_0,v_0}^\varepsilon(t) \ :=\  \mathbb P\po T^\varepsilon_1 \leqslant t \pf \ = \ 1 - \prod_{k=1}^{\lfloor t/\varepsilon\rfloor} \exp  \po-\po U_\varepsilon(x_0^\varepsilon+kv_0) - U_\varepsilon(x_0^\varepsilon+(k-1)v_0)\pf_+\pf\,.\]
From
\[\sum_{k=1}^{\lfloor t/\varepsilon\rfloor} \po U_\varepsilon(x_0^\varepsilon+kv_0) - U_\varepsilon(x_0^\varepsilon+(k-1)v_0)\pf_+ \ \underset{\varepsilon\rightarrow 0}\longrightarrow \ \int_0^t \po v_0 H'(x_0^*+sv_0)\pf_+ \dd s\,,\]
we get that $T_1^\varepsilon$ converges in law as $\varepsilon$ vanishes to a random variable $T_1^0$ with cumulative function
\[F^0_{x_0,v_0}(t) \ = \ 1 - \exp  \po -\int_0^t\po v_0 H'(x_0^*+sv_0)\pf_+ \dd s\pf \,. \]
Remark that
\[\int_0^t \po v_0 H'(x_0^*+sv_0)\pf_+ \dd s \ \geqslant \ \int_0^t v_0 H'(x_0^*+sv_0) \dd s = H(x_0^* + t v_0) - H(x_0^*) \underset{t\rightarrow \infty}\longrightarrow + \infty\,,\]
so that $T_1^0$ is almost surely finite, and similarly for $T_1^\varepsilon$ for all $\varepsilon >0$.
In particular,
\[\po \varepsilon X_{T_1^\varepsilon},V_{T_1^\varepsilon}^\varepsilon,T_1^\varepsilon\pf \ = \ \po \varepsilon x_0 + T_1^\varepsilon,-v_0,T_1^\varepsilon\pf  \ \overset{law}{\underset{\varepsilon\rightarrow 0}\longrightarrow}\ \po x_0^*+T_1^*,-v_0,T_1^*\pf \,.\]
Let $(A_j)_{j\in\N}$ be an i.i.d. sequence of random variables uniformly distributed over $[0,1]$. For all $\varepsilon\geqslant 0$, set $(Z_0^\varepsilon,R_0^\varepsilon,S_0^\varepsilon)= (\varepsilon x_0^\varepsilon,v_0,0)$ (with, in the case where $\varepsilon=0$, $Z_0^0= x_0^*$). Suppose by induction that, for some $n\in\N$, $(Z_n^\varepsilon,R_n^\varepsilon,S_n^\varepsilon)$ has been defined for all $\varepsilon \geqslant 0$ and is independent from $(A_j)_{j\geqslant n}$. Then, for all $\varepsilon\geqslant 0$,   set 
\begin{eqnarray*}
S_{n+1}^\varepsilon & = &  S_n^\varepsilon + \po F_{Z_n^\varepsilon,R_n^\varepsilon}^\varepsilon\pf^{-1}(A_n)\\ 
Z_{n+1}^\varepsilon & =&  Z_n^\varepsilon + (S_{n+1}^\varepsilon - S_{n}^\varepsilon ) R_{n}^\varepsilon
\end{eqnarray*}
and $R_{n+1}^\varepsilon  = - R_{n}^\varepsilon  $.  Remark that, for all $t\geqslant 0$, $(x,v) \mapsto F_{x,v}^\varepsilon(t)$ is continuous, uniformly in $\varepsilon$. As a consequence, for all $N\in\N$, by the previous result, almost surely, 
\begin{eqnarray}\label{Eq-CV-loi-squelette}
(Z_n^\varepsilon,R_n^\varepsilon,S_n^\varepsilon)_{n\in\cco 0,N\ccf} & \underset{\varepsilon\rightarrow 0}\longrightarrow &  (Z_n^0,R_n^0,S_n^0)_{n\in\cco 0,N\ccf} \,.
\end{eqnarray}
By construction, for all $\varepsilon>0$,
\[(Z_n^\varepsilon,R_n^\varepsilon,S_n^\varepsilon)_{n\in\cco 0,N\ccf} \ \overset{law}= \ (\varepsilon X_{T_n^\varepsilon/\varepsilon}^\varepsilon,V_{T_n^\varepsilon/\varepsilon}^\varepsilon, T_n^\varepsilon)_{n\in\cco 0,N\ccf} \]
with $T_0^\varepsilon = 0$ and, by induction, $T^\varepsilon_{k+1} = \varepsilon\inf\{k>T^\varepsilon_{k}\ : \ X_{n+1}^\varepsilon = X_n^\varepsilon\}$, and similarly
\[(Z_n^0,R_n^0,S_n^0)_{n\in\cco 0,N\ccf} \ \overset{law}= \ (Y_{J_n},W_{J_n}, J_n)_{n\in\cco 0,N\ccf} \]
with $J_0=0$ and by induction $J_{k+1} = \inf\{t>J_k,\ W_t = - W_{J_k}\}$. At this point, we have thus proved that the skeleton chain of the persistent walk (namely the persistent walk observed at its jump times, and those jump times) converges in law toward the skeleton chain of the Zig-Zag process (namely the process observed at its jump times, and those jump times). The convergence of the full chain is then a consequence from the fact that the latter is a deterministic function of its skeleton chain, as we detail now.


Note that $(S_n^0)_{n\in\N}$ has the same distribution as $(J_n)_{n\geqslant 0}$. Moreover, for any $t\geqslant 0$ and
 for all $s\in[0,t]$, $|Y_s- x_0^*|\leqslant t$, so that the jump rate of the Zig-Zag process is bounded for times $s\in[0,t]$ by $\omega(t) = \sup_{x\in[x_0^*-t,x_0^*+t]}|H'(x)|$, which is finite. In particular $\sup\{n\in\N,\ J_n<t\}$ the number of jumps of the Zig-Zag process on $[0,t]$ is stochastically smaller than a Poisson process with rate $\omega(t)$, hence is almost surely finite. 
As a consequence, almost surely $S_n^0 \rightarrow +\infty$ as $n\rightarrow \infty$, and similarly for $S_n^\varepsilon$ for all $\varepsilon\geqslant 0$.

Now the continuous-time processes are obtained by interpolating the skeleton chains. For all $\varepsilon\geqslant 0$ and $n\in\N$, set $\tilde R_t^\varepsilon = R_n^\varepsilon$ for all $t\in[S_n^\varepsilon,S_{n+1}^\varepsilon[$. For all $t\geqslant 0$, set $\tilde Z_t^0 = x_0^* + \int_0^t \tilde R_s^0 \dd s$ and remark that, by construction, $\tilde Z_{T_n}^0 =Z_n^0$ for all $n\in\N$. Finally, for all $\varepsilon>0$, all $n\in\N$ and all $k\in\cco T_{n}/\varepsilon,T_{n+1}/\varepsilon\ccf$, set 
\[\tilde Z_{k\varepsilon}^\varepsilon \ = \  Z_{n}^\varepsilon + \frac{ k\varepsilon- T_n^\varepsilon}{T_{n+1}^\varepsilon-T_n^\varepsilon} \po Z_{n+1}^\varepsilon - Z_n^\varepsilon\pf\]
and $\tilde Z_t^\varepsilon = \tilde Z_{k\varepsilon}^\varepsilon$ for all $t\in [k\varepsilon,(k+1)\varepsilon[$. This construction ensures that, for all $\varepsilon>0$,
\[\po \tilde Z_t^\varepsilon,\tilde R_t^\varepsilon \pf_{t\geqslant 0} \overset{law}= \po \varepsilon X^\varepsilon_{\lfloor t/\varepsilon\rfloor},V^\varepsilon_{\lfloor t/\varepsilon\rfloor}\pf_{t\geqslant 0} 
\qquad \text{and}\qquad 
\po \tilde Z_t^0,\tilde R_t^0\pf_{t\geqslant 0} \ \overset{law} {=}\ \po Y_{t},W_{t}\pf_{t\geqslant 0}\,.\]

For all $\varepsilon>0$, consider the increasing continuous change of time $\gamma_\varepsilon : \R_+\rightarrow \R_+$ given by: for all $n\in\N$, $\gamma_\varepsilon(T_n^\varepsilon) = T_n^0$ and $\gamma_\varepsilon$ is linear on $[T_n^\varepsilon,T_{n+1}^\varepsilon]$. In particular, $\tilde R_{\gamma_\varepsilon(t)}^\varepsilon = R_t^0$ for all $t\geqslant 0$ (they start at the same value and both change sign at all times $(T_n^0)_{n\geqslant 1}$).  Moreover, from the convergence of the skeleton chains, $\sup_{t\in [0,T_n^0]} \po |\gamma_\varepsilon(t) - t| + |\tilde Z_{\gamma_\varepsilon(t)}^\varepsilon -Z_t^0| \pf$ almost surely goes to $0$ as $\varepsilon$ vanishes. 
Together with the fact $T_n^0$ almost surely goes to $+\infty$, this means that for all fixed $T>0$,  $\sup_{t\in [0,T]} \po |\gamma_\varepsilon(t) - t| + |\tilde Z_{\gamma_\varepsilon(t)}^\varepsilon -Z_t^0| \pf$ almost surely goes to $0$, which concludes.

\end{proof}


\section{Kinetic walks on $\R^d$}\label{Section-general}

In the following, we will be interested in a class
of Markov chains $(X_n,V_n)_{n\in\N}$ on $\R^d\times\R^d$ for $d\in\N_*$, with transitions given by 
\begin{eqnarray}\label{Eq-transition-kinetic}
V_1 \sim p\po X_0,V_0;\cdot\pf \,,\qquad X_1 =  X_0 + \frac{\delta}{2}(V_0+V_1)
\end{eqnarray}
for  some $\delta>0$ and some kernel $p:(x,v)\in \R^d\times \R^d\mapsto p(x,v;\cdot)\in  \mathcal P(\R^d)$. We call such a chain the kinetic walk on $\R^d$ associated to $p$ with timestep $\delta$.  Up to a rescaling of the velocities, we can always consider that $\delta=1$.

This definition is close to -- but distinct from --  the definition of second-order Markov chains on $\R^d$ (sometimes also called correlated random walks like in \cite{Gruber}). Indeed, $(X_n,X_{n-1})_{n\in\N}$  is a Markov chain  if and only if $(X_n,X_n-X_{n-1})_{n\in\N}$  is, with a simple way to express the transition of one of these chains from the transition of the other. Denoting $V_n = (X_n - X_{n-1})/\delta$ would yield $X_1 = X_0 + \delta V_0$. On the contrary, consider  the chain defined in Section~\ref{Section-Dim1}, which satisfies \eqref{Eq-transition-kinetic} with $\delta = 1$. For this chain, $(X_n,X_{n-1})_{n\in\N}$ is not Markovian: if $X_n=X_{n-1}$ is at a strict local minimum of the potential $U$, then that only means that the velocity $V_n$ has changed between times $n-1$ and $n$, but it could be from $1$ to $-1$ or the converse (which we could know by looking farther in the past trajectory, for instance with the fact that $(-1)^{X_k}V_k= (-1)^{k+X_0}V_0$ for all $k\in\N$), and this affects the law of $X_{n+1}$. Our present definition is only motivated by the fact it gives a simple and unified framework for the cases studied in Sections \ref{Section-Dim1}, \ref{subSect-ZZd}  and \ref{Section-Schema-Euler}. Second-order Markov or related chains (like the discrete-time bounce sampler of \cite{RobertWu}) may be studied with the same arguments (especially concerning their continuous-time scaling limits). 
 We use the term \emph{kinetic} rather than \emph{persistent} in order to keep the latter for cases where the velocity is typically constant for some times, and this is not always the case for the different kinetic walks we will be interested in.


Note that discrete-space walks  can be seen as particular cases of walks on $\R^d$ as follows. Let $(X_n,V_n)_{n\in\N}$ be a kinetic walk on $\Z^d\times \Z^d$ with transitions given by \eqref{Eq-transition-kinetic} with  $\delta>0$ and $p:\Z^d\times\Z^d \rightarrow \mathcal P(\Z^d)$ and let $\eta,\kappa>0$. Consider the chain $(\tilde X_n,\tilde V_n)_{n\in\N}$ on $\R^d\times \R^d$ with transitions given by 
\[\mathbb P_{x_0,v_0} \po (\tilde X_1,\tilde V_1) = (x_1,v_1)\pf \ = \ \delta_{\eta\left \lfloor \frac{x_0}{\eta} + \frac{\delta}{2\kappa}(v_0+v_1) \right \rfloor }(x_1)p\po  \left \lfloor \frac{x_0}{\eta}\right \rfloor, \left  \lfloor \frac{v_0}{\kappa}\right \rfloor;\frac{v_1}{\kappa} \pf   \]
if $v_1\in \kappa\Z^d$ and zero else. In particular, whatever the initial condition, $\tilde X_n \in \eta\Z^d$ and  $\tilde V_n \in \kappa\Z^d$ for all $n\geqslant 1$. If $(\tilde X_0,\tilde V_0) = (\eta X_0,\kappa V_0)$, then $(\eta X_n,\kappa V_n)_{n\in\N}$ and $(\tilde X_n,\tilde V_n)_{n\in\N}$  have the same law. For this reason, in the rest of this section, only kinetic walks on $\R^d$ will be considered. See Section \ref{subSect-ZZd} for an example of kinetic walk on $\Z^d$.

This section is more concerned with a general and informal discussion than with rigorous results, the latter possibly requiring technical details that can be checked on explicit examples (see Section \ref{subSect-ZZd} in particular). In particular the results of this section (Proposition \ref{Prop:invariant} and Theorem \ref{Thm-Kallenberg}) are not new results.


\subsection{First examples}\label{SubSectionExKinetic}

\noindent \textbf{Example 1.} Let $U\in\mathcal C^1(\R^d)$. Then  the Hamiltonian dynamics $\partial_t (x_t,v_t) = (v_t,-\na U(x_t))$ can be discretized as
\[V_{(n+1)\delta} \ = \ V_{n\delta} - \delta \na U\po X_{n\delta} + \frac\delta 2V_{n\delta}\pf\,, \qquad X_{(n+1)\delta} \ =\  X_{n\delta} + \frac\delta 2\po V_{n\delta} + V_{(n+1)\delta} \pf\]
for some time-step $\delta>0$. This is a slight modification of the classical velocity Verlet integrator. It is a second-order scheme and, contrary to the basic Euler scheme, it is symplectic, like the Hamiltonian dynamics. From KAM theory and backward error analysis, it can be shown to conserve up to a high precision an approximate Hamiltonian, which ensures long-time stability, see \cite{HairerLubichWanner} and in particular \cite[Theorem 5.1]{HairerLubichWanner}  for long-time energy conservation.

\medskip

\noindent \textbf{Example 2.}  The Langevin diffusion (or sometimes underdamped Langevin diffusion)
\[\dd X_t \ = \ V_t\dd t \,,\qquad \dd V_t \ = \ -(\na U(X_t) - \gamma V_t)\dd t + \sqrt{2\gamma}\dd B_t\,,\]
where $\gamma>0$ and $(B_t)_{t\geqslant 0}$ is a standard Brownian motion on $\R^d$, can be approximated by similar second-order schemes (see 
\cite{Leimkuhler,BouRabee}, references within and Section \ref{Section-Schema-Euler} for more details on this topic). For instance, the Ricci-Ciccotti scheme \cite{RicciCiccotti}  reads
\begin{eqnarray*}
V_{(n+1)\delta} &=& e^{-\gamma \delta } V_{n\delta} - \po 1 - e^{-\gamma \delta } \pf \na U\po X_{n\delta} + \frac\delta 2V_{n\delta}\pf + \sqrt{\po 1 - e^{-\gamma \delta } \pf  } G_n\\
X_{(n+1)\delta} &=& X_{n\delta} + \frac\delta 2\po V_{n\delta} + V_{(n+1)\delta} \pf\,,
\end{eqnarray*}
where $(G_n)_{n\in\N}$ is an i.i.d. sequence with law $\mathcal N(0,I_d)$.

\subsection{Sampling by thinning}\label{Section-thinning}

The continous-time thinning and superposition method for sampling inhomogeneous Poisson processes, hence piecewise-deterministic Markov processes, is detailed e.g. in \cite{Thieullen2016}. See Section \ref{SubSectionStrang} for an example of application. This section is concerned with its discrete analogous, which is essentially a rejection method applied to Bernoulli random variables (see also \cite{Brill,Michelclock} on similar topics).

Suppose that the transition $p$ can be decomposed as
\[p(x,v;\cdot ) \ = \ q(x,v) p_1(x,v;\cdot ) + (1-q(x,v)) p_2(x,v;\cdot )\]
where, from a numerical point of view, computing $q$ and sampling according to $p_1$ is expensive, and sampling according to $p_2$ is not (for instance, $p_2(x,v;\cdot) = \delta_v(\cdot ) $ for persistent chains). Suppose moreover that $q(x,v) \leqslant \tilde q(x,v)$ where $\tilde q$ is cheaper to compute thant $q$. Then, for $(x,y)\in\R^d\times \R^d$, a random variable $V\sim p(x,v;\cdot)$ can be sampled as follows. Draw two independent random variables $U_1$ and $U_2$ uniformly distributed over $[0,1]$. If $U_1 \leqslant \tilde q(x,v)$ and $U_2 \leqslant q(x,v)/\tilde q(x,v)$, draw $V$ according to $p_1(x,v:\cdot)$ else draw $V$ according to $p_2(x,v;\cdot)$. That way, obviously, $V\sim p(x,v;\cdot)$. The trick is that if $U_1 > \tilde q(x,v)$ then we already know that $V$ has to be drawn according to $p_2(x,v;\cdot)$ and in that case we don't even have to compute $q(x,v)$. The smaller is $\tilde q$, the higher is the computational gain. 

We can go a bit further in two cases for which the first step $K$ where $q$ has to be computed, i.e. where $U_1 < \tilde q(X_K,V_K)$, can be computed more efficiently than with Bernoulli variables at each step.

\begin{itemize}
\item If $\tilde q(x,v) = \tilde q \in(0,1)$ is constant. In that case, $K+1$ follows a geometric law with parameter $\tilde q$ and can be sampled through the inverse transformation method i.e. by setting $K = \lfloor \ln U/\ln\tilde q \rfloor$ with $U$ uniformly distributed over $[0,1]$ (which is particularly more efficient than with Bernoulli variables when $\tilde q$ is small). 

\item  If $p_2(x,v;\cdot) = \delta_{f(x,v)}(\cdot)$ is deterministic, for some $f:\R^d\times\R^d \rightarrow \R^d$ (typically, for a persistent walk, $f(x,v) =v$). In that case, $K$ follows the distribution
\[\mathbb P \po K > n\pf \ = \ \prod_{k=0}^n \po 1 - \tilde q\po \varphi_k(x,v)\pf \pf \]
where $\varphi_0(x,v)=(x,v)$, $\varphi_{1}(x,v)=(x+\delta (v+f(x,v))/2,f(x,v))$ and $\varphi_{k+1}= \varphi_{k} \circ \varphi_1$  for all $k\in\N$. In particular cases, depending on $f$ and $\tilde q$, this distribution may again be sampled through the inverse transformation method.
\end{itemize}
In both cases, the algorithm is thus the following: draw $K$ as above and an independent $U_2$ uniformly distributed over $[0,1]$. Sample $(X_n, V_n)_{n\in\cco 0,K\ccf}$ as  a kinetic chain associated to the transition $p_2$. If $U_2 \leqslant q(X_{K},V_{K})/\tilde q(X_{K},V_{K})$, draw $V_{K+1}$ according to $p_1(X_{K},V_{K};\cdot )$, else according to $p_2(X_{K},V_{K};\cdot )$, and in both cases set $X_{K+1} = X_K+\delta(V_K+V_{K+1})/2$. Then, draw a new $K'$ in a similar way as $K$, etc.

In the general case, of course each of the kernels $p_1$ and $p_2$ may also be decomposed in a similar way as $p$, so that we end up for some $N\geqslant1$ with a decomposition $p=\sum_{i=1}^N p_n q_n$ where for each $n\in\cco 1,N\ccf$, $p_n$ is a transition kernel and $q_n\in[0,1]$ with some $\sum_{i=1}^N q_n = 1$. Similarly, if  $\tilde q(x,y) \leqslant \hat q(x,y)$ with  $\hat q(x,v)$ cheaper to compute than $\tilde q(x,v)$, then we can sample a Bernoulli variable with parameter $q(x,v)$ as the product of three Bernoulli variables with respective parameters $\hat q(x,v)$, $\tilde q(x,v) / \hat q(x,v)$ and $q(x,v)/\tilde q(x,v)$. In other words, in the general decomposition we can decompose each weight $q_n$ as a product $\prod_{k=1}^{r_n} q_{n,k}$ for some $r_n \geqslant 1$ and $q_{n,k} \in [0,1]$ for each $k\in\cco 1,r_n\ccf$. At the end of the day we get a representation of the form 
\[p(x,v;\cdot) \ = \ \sum_{n=1}^N p_n(x,v;\cdot) \prod_{k=1}^{r_n} q_{n,k}(x,v) \]
that we can use to sample according to $p$ in such a way that the average cost of computation is minimized. See Sections \ref{subSect-ZZd} and \ref{Section-Schema-Euler} for examples and related questions, in particular the link with factorization for Metropolis acceptance probabilities in Section \ref{SubSection-FactorisationZZZ}. See also \cite{MonmarcheBouncyChimie} for an application in molecular dynamics.

\subsection{Invariant measure}\label{SubsectionInvariantMeasure}

For MCMC applications, usual continuous-time kinetic Markov processes are designed to sample according to a given probability measure of the form $\mu(\dd x,\dd v) = \pi(\dd x) \otimes \nu(\dd v)$ on $\R^{d}\times\R^d$. The target is the position marginal $\pi$  and the velocity marginal $\nu$ can be chosen by the user, usual choices being Gaussian or uniform (over the sphere or a discrete set of velocities) distributions. By definition, $\mu $ is invariant for the kinetic walk on $\R^d$ associated to some kernel $p$ and time-step $\delta$ if 
\[\int f\po x + \frac{\delta}{2}(v+w),w\pf p(x,v;\dd w) \mu(\dd x,\dd v) \ = \ \int f\po x, v\pf \mu(\dd x,\dd v)\]
for all   $f\in\mathcal M_b(\R^{2d})$. 
%
%
Nevertheless, there are cases (such as Examples 1 and 2 of Section~\ref{SubSectionExKinetic}) where this condition is only approximately satisfied, typically $\mu$ is invariant only for the continuous-time process that is approximated by the discrete walk. In that case, for a fixed $\delta>0$, the kinetic walk, with timestep $\delta$ and associated to some transition $p_\delta$, typically admits some invariant measure $\mu_{\delta}$, which is also an invariant measure for the continuous-time Markov chain with generator
\[L_{\delta} f(x,v) \ = \ \int  \po f\po x + \frac{\delta}{2}(v+w),w\pf -f(x,v)\pf  p_{\delta}(x,v;\dd w) \,. \] 
In the typical case where $L_{\delta}$ converges,  as $\delta$ vanishes, to some $L$ for which $\mu$ is invariant, it is possible to obtain explicit error bounds between $\mu_{\delta}$ and $\mu$. We recall here a general argument based on Stein's method \cite[Section 6.2]{MattinglyStuartretyakov}.

\begin{prop}\label{Prop:invariant}
Let $L$ and $L_{\delta}$ be two Markov generators on some Polish space $E$ and let $\mu,\mu_\delta\in\mathcal P(E)$ be invariant measures of, respectively, $L$ and $L_\delta$. Denote $(P_t)_{t\geqslant 0}$ the semi-group associated to $L$. Suppose that there exist $C,\rho,h_{\delta}>0$ and two norms $N_1$ and $N_2$ on a subspace $\tilde {\mathcal M}$ of $M_b(E)$ such that the following holds:  1) $\tilde {\mathcal M}$ is dense in $(M_b(\R^{2d}),\|\cdot\|_\infty)$ and is contained by the domains of $L$ and $L_\delta$; 2)  for all $f\in \tilde {\mathcal M}$  and all $t\geqslant 0$, $N_1(P_t f-\mu f) \leqslant Ce^{-\rho t} N_2(f)/(1\vee \sqrt{t})$; 3) for all $f\in \tilde {\mathcal M}$, $\|(L-L_\delta) f\|_\infty \leqslant h_{\delta} N_1(f)$. Then, denoting $N_2(\mu-\mu_\delta)=\sup\{\mu(f) - \mu_\delta(f),\ N_2(f) \leqslant 1\}$, it holds:
\[N_2(\mu-\mu_\delta) \ \leqslant \ C(2/3+1/\rho)h_\delta\,.\]
\end{prop}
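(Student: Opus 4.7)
The plan is to apply Stein's method based on the Poisson equation associated with $L$. Given $f\in\tilde{\mathcal M}$ with $N_2(f)\leqslant 1$, I would first subtract its mean so that $\mu(f)=0$ (which does not alter $\mu_\delta(f)-\mu(f)$), and then introduce
\[ \phi \ = \ -\int_0^\infty P_t f \, \dd t\,. \]
This is well-defined because hypothesis (2) bounds $N_1(P_t f-\mu f)$ by a function of $t$ that is integrable on $[0,\infty)$. Using the semigroup identity $P_s\phi - \phi = \int_0^s P_t f\,\dd t$ and letting $s\to 0$ would give $L\phi = f$.

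Next, I would exploit the invariance of $\mu_\delta$ under $L_\delta$, which gives $\mu_\delta(L_\delta\phi)=0$, together with $L\phi=f$, to obtain the Stein identity
\[ \mu_\delta(f) - \mu(f) \ = \ \mu_\delta(L\phi) \ = \ \mu_\delta\bigl((L-L_\delta)\phi\bigr)\,. \]
Hypothesis (3) would then yield $|\mu_\delta(f) - \mu(f)| \leqslant \|(L-L_\delta)\phi\|_\infty \leqslant h_\delta N_1(\phi)$.

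To control $N_1(\phi)$, I would use subadditivity of $N_1$ combined with hypothesis (2):
\[ N_1(\phi) \ \leqslant \ \int_0^\infty N_1(P_t f - \mu f)\, \dd t \ \leqslant \ C N_2(f) \int_0^\infty \frac{e^{-\rho t}}{1\vee \sqrt t}\, \dd t\,. \]
An elementary computation of the last integral, splitting at $t=1$ and using $N_2(f)\leqslant 1$, yields the constant $2/3+1/\rho$; taking the supremum over admissible $f$ would then give the stated bound on $N_2(\mu-\mu_\delta)$.

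The main obstacle is technical rather than conceptual: the object $\phi$ is defined through an improper integral, and one must ensure it lies in the domains of both $L$ and $L_\delta$ so that hypotheses (2) and (3) can actually be applied to it. This is where the density assumption (1) on $\tilde{\mathcal M}$ enters — I would work first with the truncated $\phi_T = -\int_0^T P_t f\,\dd t$ (which stays in the common domain if $\tilde{\mathcal M}$ is stable under the semigroup), obtain the bound uniformly in $T$, and then pass to the limit $T\to\infty$. A similar density/limit argument would extend the final bound from $f\in\tilde{\mathcal M}$ to all bounded measurable $f$ with $N_2(f)\leqslant 1$.
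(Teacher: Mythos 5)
Your proposal follows essentially the same route as the paper: construct the Poisson solution $g=\int_0^\infty P_t(f-\mu f)\,\dd t$ (your $\phi$, up to sign), use $\mu_\delta(L_\delta g)=0$ to get $\mu(f)-\mu_\delta(f)=\mu_\delta\po(L-L_\delta)g\pf$, and bound $N_1(g)$ by integrating hypothesis 2), which yields the constant $C(2/3+1/\rho)$. Your extra care about the domain of the improper integral (truncation at $T$ and passing to the limit) is a reasonable way to justify a step the paper simply asserts, so the argument matches.
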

\begin{proof}
Let $f\in\tilde{\mathcal M}$ and $g=\int_0^\infty P_t (f-\mu f) \dd t$, which is well defined and satisfies  $N_1(g) \leqslant C(2/3+1/\rho)N_2(f)$ and solves the Poisson equation $Lg=\mu f-f$. Using that $\int L_\delta g \mu_{\delta} = 0$ by invariance of $\mu_\delta$, we get that
\[\mu(f) - \mu_\delta(f) \  = \ \mu_\delta(Lg) \ = \ \mu_\delta\po (L-L_\delta)g\pf \ \leqslant \ h_\delta  C(2/3+1/\rho)N_2(f)\,.\]
\end{proof}
Remark that, in Proposition \ref{Prop:invariant}, $C$ and $\rho$ only depend on the limit process $L$. For particular processes, their existence usually follows from  regularization and  ergodicity results for $L$ (with $N_1$ and $N_2$, typically, $L^2(\mu)$, $H^1(\mu)$ or $V$-norms associated to some Lyapunov function), see e.g.  Section \ref{SubSectionTCLZZZd}, \cite{GlynnMeyn,MattinglyStuartretyakov} or, for velocity jump processes, \cite[Section 3]{Monmarche2017}. Then, if we are given a family of generators $L_\delta$ for all $\delta>0$ such that $\|(L-L_\delta) f\|_\infty \leqslant h_{\delta} N_1(f)$ with $h_\delta\rightarrow 0$ for $\delta\rightarrow 0$, we get a quantitative estimate on the convergence $\mu_\delta\rightarrow \mu$ (remark that, though the uniqueness of the invariant measure is ensured for $L$ from the geometric ergodicity assumption, we haven't assumed the uniqueness of the invariant measure for $L_\delta$).

In fact, reiterating this argument, following the Talay-Tubaro method \cite{TalayTubaro},  an expansion of the bias $\mu_\delta f -  \mu f$ in term of powers of $\delta$ can be computed and a Romberg extrapolation (or related methods) can be used to kill the first order terms, see  \cite[Section 2.3]{TalayTubaro} and \cite[Section 3.3.4]{LelievreStoltz}. 

Proposition \ref{Prop:invariant} can be shown to apply in Example 2 of Section~\ref{SubSectionExKinetic}, i.e. the Langevin dynamics, see for instance \cite{LeimkuhlerMatthewsStoltz}. On the contrary, it does not apply to the Hamiltonian dynamics of Example 1. Indeed, in that case, the limit process admits many invariant measures (because of energy conservation) and thus $C$ and $\rho$ cannot exist.


With Proposition \ref{Prop:invariant}, we have seen that the convergence of the invariant measures is related to the convergence of the generators. Now, the latter is classically related to the convergence of the processes and we address this question in the next section. See Theorem \ref{Thm-Kallenberg} below for some considerations on the convergence of the generators in the specific case of kinetic walks.

\subsection{Scaling limits}\label{Section-scaling-general}

In this section, we consider for all $\varepsilon\in(0,1]$ a kinetic walk  $(X_n^\varepsilon,V_n^\varepsilon)_{n\in\N}$ on $\R^d\times \R^d $  with timestep $1$, kernel $p_\varepsilon$ and  initial condition $(x_0^\varepsilon,v_0^\varepsilon)$. We are interested in the possible convergence of this chain, possibly rescaled as $\varepsilon$ vanishes, toward a continuous-time process. The regime for which $(X_n^\varepsilon)_{n\in\N}$ converges toward an elliptic diffusion has been abundantly studied for second-order chains, see \cite[Section 5]{GruberThese} and references therein, and for this reason we will mostly focus on the cases where the full system $(X_n^\varepsilon,V_n^\varepsilon)_{n\in\N}$ converges toward a continuous-time kinetic process $(Y_t,W_t)_{t\geqslant 0}$, where kinetic means that $Y_t = Y_0+\int_0^t W_s\dd s$.

To alleviate notations, unless otherwise specified, we drop the $\varepsilon$ superscript in all the rest of the section and simply write $(X_n,V_n)$. We start with an informal discussion on the scaling in the simple case where the dynamics are homogeneous with respect to the space variable $x$. Since, in order to expect a continuous-time limit, $X_n$ should be nearly constant over a large number of steps as $\varepsilon$ goes to zero, this homogeneous case  should be expected to describe the short time dynamics of the general case.

\subsubsection{The space homogeneous case}

Consider the case where $p_\varepsilon(x,v;\cdot) = h_\varepsilon(v;\cdot)$ for some transition kernel $h_\varepsilon : \R^d  \rightarrow \R^d$. In that case, $(V_n)_{n\in\N}$ is a Markov chain by itself. Since
\begin{eqnarray}\label{Eq-space-homogeneous}
X_n &=& X_0 +  \sum_{k=1}^n V_k + \frac{V_0-V_n}{2}\,,
\end{eqnarray}
then, provided that, say, the variance of the last term is bounded uniformly in $n$, the situation is essentially the same as the case of correlated random walks. In term of time/space scaling, different cases may be distinguished concerning the limit of $(X_n)_{n\in\N}$:
\begin{itemize}
\item If $(V_k)_{k\in\N}$ is in fact an i.i.d. sequence -- namely if $v\mapsto h_\varepsilon(v,\cdot)$ is  constant -- then, up to a vanishing term,  $(X_n)_{n\in\N}$ is a simple random walk. If it admits a continuous-time limit, then the latter is necessarily a Levy process, and conversely any Levy process $(L_t)_{t\geqslant 0}$ may be obtained as a scaling limit of such a walk, even if we restrict the question to walks on $\Z^d$: indeed,  considering $V_k = \lfloor (L_{(k+1)\varepsilon} - L_{k\varepsilon})/\varepsilon\rfloor$, then $(\varepsilon X_{\lfloor t/\varepsilon\rfloor})_{t\geqslant 0} \rightarrow (L_t)_{t\geqslant 0}$.

Let us consider two particular cases. First, suppose that there exist $\eta:(0,1]\rightarrow (0,+\infty)$ such that 
\begin{eqnarray*}
\frac{\eta(\varepsilon)}{\varepsilon} \mathbb E (V_1) & \underset{\varepsilon\rightarrow 0}\longrightarrow & \mu \in\R^d\\
\frac{\eta^2(\varepsilon)}{\varepsilon} \text{Var}(V_1) & \underset{\varepsilon\rightarrow 0}\longrightarrow & \Sigma \in \mathcal M^{sym\geqslant 0}_{d\times d}(\R)
\end{eqnarray*}
(with possibly $\mu=0$ or $\Sigma = 0$). Then, from Donsker's Theorem, provided that $\eta(\varepsilon) x_0^\varepsilon$ converges to some $x_0^*\in\R^d$, we get the drifted Brownian motion
\[(\eta(\varepsilon)X_{\lfloor t/\varepsilon\rfloor })_{t\geqslant 0} \ \overset{law} {\underset{\varepsilon\rightarrow 0}\longrightarrow}\ (x_0^*+ t\mu + \Sigma^{\frac12} B_t)_{t\geqslant 0}\,,\]
where $(B_t)_{t\geqslant 0}$ is a standard Brownian motion. Second, suppose that there exist $\lambda >0$, $\mu\in\R^d$, $\nu\in\mathcal P(\R^d)$, $\eta:(0,1]\rightarrow(0,+\infty)$ and $\mu:(0,1] \rightarrow\R^d$ such that
\begin{eqnarray*}
\mathbb P \po  V_1 = \mu(\varepsilon)\pf & = & 1 - \lambda \varepsilon + \underset{\varepsilon\rightarrow 0}o(\varepsilon)\\
\mu(\varepsilon) & \underset{\varepsilon\rightarrow 0}\longrightarrow & \mu\\
\mathcal Law\po  \eta(\varepsilon) V_1\ | \ V_1\neq \mu(\varepsilon) \pf & \underset{\varepsilon\rightarrow 0}\longrightarrow & \nu \,.
\end{eqnarray*}
Then the cardinality $N_t^\varepsilon$ of $\{n\in\N\ : \varepsilon n\leqslant t,\ V_k \neq \mu(\varepsilon)\}$ converges as $\varepsilon$ vanishes to a Poisson process $N_t$ with intensity $\lambda$. As a consequence, considering an i.i.d. sequence $(W_k)_{k\in\N}$ with law $\nu$ independent from $(N_t^\varepsilon)_{t\geqslant 0}$ for all $\varepsilon$ and from $(N_t)_{t\geqslant 0}$, provided that $\eta(\varepsilon) x_0^\varepsilon$ converges to some $x_0^*\in\R^d$, we get the drifted compound Poisson process
\[(\eta(\varepsilon) X_{\lfloor t/\varepsilon\rfloor })_{t\geqslant 0}   \ \overset{law} { \underset{\varepsilon\rightarrow 0}\longrightarrow}\ x_0^* + t \mu +  \po \sum_{n=0}^{N_t  } W_k\pf_{t\geqslant 0} \,.\]
\end{itemize}

In the more general situation where $(V_k)_{k\in\N}$ is not necessarily an i.i.d. sequence, if there exists a $\kappa:(0,1]\rightarrow (0,+\infty)$ such that $\po \kappa(\varepsilon) V_{\lfloor t/\varepsilon\rfloor}\pf_{t\geqslant 0} $ converges in distribution toward a continuous-time Markov process $(W_t)_{t\geqslant 0}$ then, denoting $\eta(\varepsilon) = \varepsilon \kappa(\varepsilon)$, \eqref{Eq-space-homogeneous} reads
\[\eta(\varepsilon) X_{\lfloor t/\varepsilon \rfloor}  \ = \ \eta(\varepsilon) x_0^\varepsilon + \int_0^t \kappa(\varepsilon) V_{\lfloor s/\varepsilon\rfloor} \dd s + \frac{ \varepsilon}2\po \kappa(\varepsilon)  V_{\lfloor t/\varepsilon \rfloor}  - \kappa(\varepsilon)  V_0\pf \,.\]
Integration being continuous with respect to Skorohod convergence, provided that $\eta(\varepsilon) x_0^\varepsilon\rightarrow x_0^*$ as $\varepsilon$ vanishes, this yields
\[\po \eta(\varepsilon) X_{\lfloor t/\varepsilon \rfloor},\kappa(\varepsilon) V_{\lfloor t/\varepsilon\rfloor}\pf_{t\geqslant 0} \ \underset{\varepsilon\rightarrow 0}\longrightarrow\  \po x_0^* + \int_0^t W_s \dd s\,,\ W_t\pf_{t\geqslant 0}\,. \]
Remark that, of course, the scaling factor for the space variable is fixed by the scaling factors of the time and velocity variables.
\begin{itemize}
\item For instance, if $(V_k)_{k\in\N}$ is a random walk on $\Z^d$, then as seen before it can converge toward a drifted Brownian motion, in which case the scaling limit of $(X_n,V_n)_{n\in\N}$ is the  Langevin diffusion, i.e. the solution of the SDE
\[\left\{\begin{array}{rcl}
\dd X_t &=& V_t \dd t\\
\dd V_t  &= & \mu + \Sigma^{1/2}\dd B_t\,,
\end{array} \right.\]
where $(B_t)_{t\geqslant 0}$ is a standard Brownian motion on $\R^d$.

\item Alternatively, if there exist $\lambda >0$, $\nu\in\mathcal P(\R^d)$ and $\kappa:(0,1]\rightarrow(0,+\infty)$ such that
\begin{eqnarray*}
\mathbb P \po  V_{k+1} = V_k\pf & = & 1 - \lambda \varepsilon + \underset{\varepsilon\rightarrow 0}o(\varepsilon)\\
\mathcal Law\po  \kappa(\varepsilon) V_{k+1}\ | \ V_{k+1}\neq V_k\pf & \underset{\varepsilon\rightarrow 0}\longrightarrow & \nu \,,
\end{eqnarray*}
then $(\kappa(\varepsilon) V_{\lfloor t/\varepsilon \rfloor})_{t\geqslant 0}$ converges as $\varepsilon$ vanishes toward $(Y_{N_t})_{t\geqslant 0}$, 
where $(Y_k)_{k\in\N}$ is an i.i.d. sequence with law $\nu$ and $(N_t)_{t\geqslant 0}$ is a Poisson process with intensity $\lambda$, independent from $(Y_k)_{k\in\N}$. In that case, $(X_t,V_t)_{t\geqslant 0}$ is the velocity jump process associated to the linear Boltzmann (or BGK) equation \cite{Bouin}.
\end{itemize}

These different examples highlighted three cases: if there is no inertia, the velocity tends to mix fast and $(X_n)_{n\in\N}$ is Markovian. If there is some inertia in the sense that the velocity tends to stay aligned from one step to the other but with possible small fluctuations, the limit is a kinetic diffusion. If the velocity is rigorously constant for large times, the chain converges toward a velocity jump process. Of course this is  a  non-exhaustive list: in general, drift, diffusion, Poisson or $\alpha$-stable jumps can all be present in the limit, either kinetic or not (see in particular Section \ref{Section-Schema-Euler}). But with these three regimes we cover the cases of the processes classically used in MCMC sampling. 


\subsubsection{The case of kinetic limits}

We keep the notations of the beginning of Section \ref{Section-scaling-general}. In particular, for all $\varepsilon\in(0,1]$, $(X_n^\varepsilon,V_n^\varepsilon)_{n\in\N}$ is a kinetic walk on $\R^d\times \R^d$  with transition $p_\varepsilon$ and   time-step $1$. Let $\kappa:(0,1]\rightarrow (0,+\infty)$ and, for all $\varepsilon\in(0,1]$, set $\eta(\varepsilon) = \varepsilon \kappa(\varepsilon)$. For any fixed $x\in\R^d$, let $(\tilde V_k^{\varepsilon,x})_{k\in\N}$ be the Markov chain on $\R^d$ with transitions 
\[\tilde V_{k+1}^{\varepsilon,x}/\kappa(\varepsilon)\ \sim \ p_\varepsilon(x/\eta(\varepsilon),\tilde V_k^{\varepsilon,x}/\kappa(\varepsilon);\cdot)\,,\]
and let $N_t^\varepsilon$ be a Poisson process with intensity $1/\varepsilon$. Denote  $\tilde  L_{\varepsilon}$ the operator defined on $\mathcal M_b(\R^{2d})$ by
\[\tilde  L_{\varepsilon} f(x,v) \ = \ \frac1\varepsilon\int\co f(x,\kappa(\varepsilon) w) - f(x,v) \cf p_\varepsilon(x/\eta(\varepsilon),v/\kappa(\varepsilon);\dd w)
\,,\]
which is the infinitesimal generator of the Feller process $(\tilde X_{t}^{x},\tilde V_{N_t^\varepsilon}^{\varepsilon,x})_{t\geqslant 0}$, where we simply set $\tilde X_{t}^{x}=x$ for all $t\geqslant 0$ (hence the link with the space homogeneous case).  A direct corollary of \cite[Theorem  17.28]{Kallenberg} is the following:

\begin{thm}\label{Thm-Kallenberg}
Suppose that there exists a Feller generator $\tilde L$ on $\R^{2d}$ with domain containing $\mathcal C^2_c(\R^{2d})$ and such that $\|\tilde  L_\varepsilon f-\tilde Lf\|_\infty \rightarrow 0$ as $\varepsilon \rightarrow 0$ for all $f\in\mathcal C^2_c(\R^{2d})$. Define the operator $L$ by
\[Lf(x,v) \ = \ v\cdot \na_x f(x,v) + \tilde L f(x,v)\,.\]
Suppose that $L$ is the infinitesimal generator of a Feller process $(Y_t,W_t)_{t\geqslant 0}$ and that $\mathcal C^2_c(\R^{2d})$ is a core of $L$. Suppose that for all compact set $\mathcal K\subset \R^d$,
\begin{eqnarray}\label{Eq-Scaling-vitesse}
\sup_{x\in\mathcal K}\sup_{v\in\R^d} \int   | v -\kappa(\varepsilon)w|^2    p_\varepsilon\po \frac{x}{\eta(\varepsilon)},\frac{v}{\kappa(\varepsilon)};\dd w\pf & \underset{\varepsilon\rightarrow 0}\longrightarrow   & 0\,.
\end{eqnarray}
Finally, suppose that $(\eta(\varepsilon)X_0^\varepsilon,\kappa(\varepsilon) V_0^\varepsilon)$ converges in law toward $(Y_0,W_0)$ as $\varepsilon$ vanishes. Then
\[\po \eta(\varepsilon)  X^\varepsilon_{\lfloor t/\varepsilon\rfloor},\kappa(\varepsilon)  V^\varepsilon_{\lfloor t/\varepsilon\rfloor}\pf_{t\geqslant 0} \ \overset{law}{\underset{\varepsilon\rightarrow 0}\longrightarrow}\  (Y_t,W_t)_{t\geqslant 0}\,.\]
\end{thm}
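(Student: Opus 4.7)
The plan is to apply Kallenberg's Theorem 17.28 as suggested. Set $\tilde X_n^\varepsilon = \eta(\varepsilon) X_n^\varepsilon$ and $\tilde V_n^\varepsilon = \kappa(\varepsilon) V_n^\varepsilon$. Using $\eta(\varepsilon) = \varepsilon \kappa(\varepsilon)$, the defining relation \eqref{Eq-transition-kinetic} rescales to $\tilde X_{n+1}^\varepsilon = \tilde X_n^\varepsilon + \frac{\varepsilon}{2}(\tilde V_n^\varepsilon + \tilde V_{n+1}^\varepsilon)$, and the rescaled one-step generator acting on $f\in\mathcal C^2_c(\R^{2d})$ is
\[\mathcal L_\varepsilon f(x,v) \ = \ \frac1\varepsilon \int \co f\po x + \frac{\varepsilon}{2}(v+\kappa(\varepsilon) w),\kappa(\varepsilon) w\pf - f(x,v) \cf p_\varepsilon\po \frac{x}{\eta(\varepsilon)},\frac{v}{\kappa(\varepsilon)};\dd w\pf .\]
Since $\mathcal C^2_c(\R^{2d})$ is a core of $L$ by assumption, it suffices to prove $\|\mathcal L_\varepsilon f - L f\|_\infty \to 0$ for each such $f$, and to check that the scaled initial condition converges in law; the latter is part of the hypotheses.

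The key algebraic step is the telescoping decomposition of the increment inside the integral:
\[f\po x + h, \kappa(\varepsilon) w\pf - f(x,v) \ = \ \co f(x,\kappa(\varepsilon) w) - f(x,v)\cf + \co f(x+h,\kappa(\varepsilon) w) - f(x,\kappa(\varepsilon) w)\cf,\]
with $h = \frac{\varepsilon}{2}(v + \kappa(\varepsilon) w)$. Integrated and divided by $\varepsilon$, the first bracket is exactly $\tilde L_\varepsilon f(x,v)$, which converges uniformly to $\tilde L f(x,v)$ by the standing hypothesis. For the second bracket, a first-order Taylor expansion in $x$ yields
\[f(x+h, \kappa(\varepsilon) w) - f(x,\kappa(\varepsilon) w) \ = \ h\cdot \na_x f(x,\kappa(\varepsilon) w) + R,\qquad |R|\leqslant \tfrac12 \|D^2_x f\|_\infty |h|^2.\]
After division by $\varepsilon$ and integration, the leading term is $\frac12 \int (v+\kappa(\varepsilon) w)\cdot \na_x f(x,\kappa(\varepsilon) w)\, p_\varepsilon$, and the goal is to show it converges to the transport term $v\cdot\na_x f(x,v)$ uniformly in $(x,v)$. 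I would split
\[\na_x f(x,\kappa(\varepsilon) w) \cdot (v+\kappa(\varepsilon) w) - 2v\cdot \na_x f(x,v) \ = \ \na_x f(x,\kappa(\varepsilon) w) \cdot (\kappa(\varepsilon) w - v) + 2v \cdot \co \na_x f(x,\kappa(\varepsilon) w) - \na_x f(x,v)\cf,\]
bounding the first piece by $\|\na_x f\|_\infty$ times the $L^2$-norm $(\int |v-\kappa(\varepsilon) w|^2 p_\varepsilon)^{1/2}$ via Cauchy–Schwarz and the scaling assumption \eqref{Eq-Scaling-vitesse}, and the second by the uniform continuity of $\na_x f$ together with \eqref{Eq-Scaling-vitesse}.

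The principal difficulty, and the real content of the argument, is the uniformity in $v\in\R^d$: the naive bound on the Taylor remainder contains a term proportional to $\varepsilon |v|^2$ that does not vanish uniformly, and the factor $2v\cdot[\na_x f(x,\kappa(\varepsilon) w) - \na_x f(x,v)]$ is likewise unbounded in $v$. To handle both, I would exploit the compact support of $f$: $\na_x f(x,\cdot)$ and $D^2_x f(x,\cdot)$ vanish unless their second argument lies in some compact $K_v$, so the integrand of the space-move part is zero unless $\kappa(\varepsilon) w\in K_v$, while in the complementary region the $-f(x,v)$ term contributes nothing as soon as $v\notin K_v$. For $|v|$ large compared to $K_v$, Chebyshev's inequality applied to \eqref{Eq-Scaling-vitesse} gives $p_\varepsilon(x/\eta(\varepsilon),v/\kappa(\varepsilon); \{w\,:\,\kappa(\varepsilon) w\in K_v\}) = O(1/|v|^2)$, and this factor dominates the offending $|v|^2$; for $|v|$ bounded, \eqref{Eq-Scaling-vitesse} and uniform continuity of $\na_x f$ suffice directly. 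Putting these two regimes together gives $\|\mathcal L_\varepsilon f - Lf\|_\infty \to 0$, and Theorem 17.28 of \cite{Kallenberg} concludes.
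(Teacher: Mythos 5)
Your proof is correct and follows essentially the same route as the paper: reduce, via Kallenberg's Theorem 17.28 and the core assumption, to the uniform convergence of the rescaled one-step generators, split the increment into the velocity part (which is exactly $\tilde L_\varepsilon f\to\tilde L f$) and the position-move part, Taylor expand the latter, and exploit the compact support of $f$ together with \eqref{Eq-Scaling-vitesse} to control the unbounded factor $|v|$. Your Chebyshev-based split between large and bounded $|v|$ is just a variant of the paper's pointwise bound $|v|\left(\1_{\mathcal B}(v)+\1_{\mathcal B}(\kappa(\varepsilon)w)\right)\leqslant 2R+|v-\kappa(\varepsilon)w|$ (if anything, slightly more explicit about the Taylor remainder), and omitting the paper's Poissonization device is harmless since Kallenberg's theorem applies directly to discrete-time chains observed at times $\lfloor t/\varepsilon\rfloor$.
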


See some applications with Proposition \ref{Thm-Dimd-scaling},  Section \ref{SubSectionStrang} or \cite{MonmarcheBouncyChimie}.

\begin{proof}
Denoting $(Y_t^\varepsilon,W_t^\varepsilon) = (\eta(\varepsilon) X_{N_t^\varepsilon}^\varepsilon,\kappa(\varepsilon) V_{N_t^\varepsilon}^\varepsilon)$,  the generator $L_\varepsilon$ of $(Y_t^\varepsilon,W_t^\varepsilon)_{t\geqslant 0}$ is defined on $\mathcal M_b(\R^{2d})$ by
\[ L_{\varepsilon} f(x,v) \ = \ \frac1\varepsilon\int\co f(x+\varepsilon (v+w)/2,\kappa(\varepsilon) w) - f(x,v) \cf p_\varepsilon(x/\eta(\varepsilon),v/\kappa(\varepsilon);\dd w)\,.\]
Note that $L_{\varepsilon}f \in \mathcal M_b(R^{2d})$ for all $f \in \mathcal M_b(R^{2d})$, so that if $(X_0^\varepsilon,V_0^\varepsilon)=(x,v)$,
\begin{eqnarray*}
\mathbb E\po f\po Y_t^\varepsilon,W_t^\varepsilon \pf \pf & = & \sum_{k\in\N} \mathbb P\po N_t^\varepsilon = k\pf \mathbb E\po   f\po  \eta(\varepsilon) X_{k}^\varepsilon,\kappa(\varepsilon) V_{k}^\varepsilon\pf  \pf \\
&= &f(x,v) + t L_{\varepsilon}f(x,v) + \underset{t\rightarrow 0}o(t)\,,
\end{eqnarray*}
with a negligible term uniform in $(x,v)$ (which will be the case of all the  negligible terms in the rest of the proof). This means that $\mathcal M_b(\R^{2d})$, hence $\mathcal C^2_c(\R^{2d})$, is included in the strong domain of $L_{\varepsilon}$ for all $\varepsilon\in(0,1]$. From \cite[Theorem 17.28]{Kallenberg} and the assumption that $\mathcal C^2_c(\R^{2d})$ is a core for $L$, it only remains to check that $\|L_\varepsilon f- Lf\|_\infty $ vanishes with $\varepsilon$ for all $f\in\mathcal C^2_c(\R^{2d})$. Now, indeed, for $f\in\mathcal C^2_c(\R^{2d})$,
\begin{eqnarray*}
L_{\varepsilon} f(x,v) &=& \tilde  L_{\varepsilon} f(x,v)\\
& &   + \ \frac1\varepsilon\int\co f\po x+\frac{\varepsilon}2 (v+\kappa(\varepsilon) w),\kappa(\varepsilon) w\pf  - f(x,\kappa(\varepsilon) w) \cf p_\varepsilon\po \frac{x}{\eta(\varepsilon)},\frac{v}{\kappa(\varepsilon)};\dd w\pf \\
 &=&  \tilde L f(x,v) + \int \frac{v+\kappa(\varepsilon) w}2 \cdot \na_x f\po x ,\kappa(\varepsilon) w\pf   p_\varepsilon\po \frac{x}{\eta(\varepsilon)},\frac{v}{\kappa(\varepsilon)};\dd w\pf  + \underset{t\rightarrow 0}o(1) \,.
\end{eqnarray*}
Considering a ball $\mathcal B\subset\R^d$ of some radius $R$ such that the support of $f$ is included in $\mathcal B^2$, we bound
\begin{eqnarray*}
 \left|  \po v-\frac{v+\kappa(\varepsilon) w}2\pf \cdot \na_x f\po x ,\kappa(\varepsilon) w\pf   \right| & \leqslant & \frac12 \|\na_x f\|_\infty  |v-\kappa(\varepsilon) w| \1_{\mathcal  B}(x) \1_{\mathcal  B}(\kappa(\varepsilon) w) \\
|\na_x f\po x ,\kappa(\varepsilon) w\pf - \na_x f\po x ,v\pf|  & \leqslant & | v -\kappa(\varepsilon)w| \| \na^2 f\|_\infty \1_{\mathcal B}(x)\po \1_{\mathcal B}(v) + \1_{\mathcal B}(\kappa(\varepsilon) w)\pf \\
|v| \po \1_{\mathcal B}(v) + \1_{\mathcal B}(\kappa(\varepsilon) w)\pf& \leqslant & 2R + | v -\kappa(\varepsilon)w|\,.
\end{eqnarray*}
As a consequence, for some $C_f>0$, for all $(x,v)\in\R^{2d}$,
\begin{multline*}
| L_{\varepsilon} f(x,v) - L f(x,v)| \ \leqslant \\ \1_{\mathcal B}(x) C_f \int  \po | v -\kappa(\varepsilon)w| + | v -\kappa(\varepsilon)w|^2\pf    p_\varepsilon\po \frac{x}{\eta(\varepsilon)},\frac{v}{\kappa(\varepsilon)};\dd w\pf   + \underset{t\rightarrow 0}o(1) \,.
\end{multline*}
Condition \eqref{Eq-Scaling-vitesse} concludes. 
\end{proof}

\section{The discrete Zig-Zag walk}\label{subSect-ZZd} 

This section is devoted to the definition and study of a discrete-space analogous of the Zig-Zag process on $\R^d$. 

\subsection{Definition}

Let $d\in\N_*$, $U:\Z^d \rightarrow \R$ be such that $\mathcal Z = \sum_{x\in\Z^d} \exp(-U(x)) < +\infty$,  $\pi(x) = \exp(-U(x))/\mathcal Z$ be the associated Gibbs distribution and $\mu(x,v) = \pi(x)/{2^d}$ for $v\in\{-1,1\}^d$ and $x\in\Z^d$.  For $i\in\cco 1,d\ccf$, denote $e_i$ the $i^{th}$ vector of the canonical basis of $\R^d$ and let
\begin{eqnarray}\label{Eq-ZZZd-transition-p}
q_i(x,v_i) & = & \min\po \frac{\pi(x+v_ie_i)}{\pi(x)},1\pf \ = \ e^{-\po U(x+v_ie_i) - U(x)\pf_+}\notag \\
p_i(x,v_i;w_i) &=& q_i(x,v_i) \delta_{v_i}(w_i) + (1-q_i(x,v_i)) \delta_{-v_i}(w_i)\notag \\
p(x,v;w) &= & \prod_{i=1}^d p_i\po x + \sum_{j=1}^{i-1} \frac{v_j+w_j}2 e_j, v_i,w_i\pf  \,.
\end{eqnarray}
That way, $p:\Z^d\times \{-1,1\}^d\rightarrow\mathcal P(\{-1,1\}^d)$. We call Zig-Zag walk on $\Z^d$ the kinetic walk $(X_n,V_n)_{n\in\N}$ associated to this kernel $p$ with timestep $\delta=1$, i.e. the Markov chain on $\Z^d\times\{-1,1\}^d$ whose transitions are given by \eqref{Eq-transition-kinetic}. Remark that, for $d=1$, we retrieve the chain studied in Section \ref{Section-Dim1}.

A random variable $V\sim p(x,v;\cdot)$ in $\{-1,1\}^d$ can be sampled as follows. Set $Y_0=x$, and suppose by induction that $Y_{k-1}\in\Z^d$ has been defined for some $k\in\cco 1,d\ccf$.  Set $V_{k} = v_{k}$ with probability $q_k(Y_{k-1},v_k)$ and $V_{k}=-v_{k}$ else, and in either case set $Y_k = Y_{k-1}+(v_k+V_k)/2$. Then $V$ is distributed according to $p(x,v;\cdot)$, and $X:=Y_d = x+(v+V)/2$. In other words, this is a Gibbs algorithm based on the Zig-Zag walk on $\Z$: one step of the Zig-Zag walk in $\Z^d$ is the result of $d$ successive one-dimensional Zig-Zag steps on each coordinate, the others being fixed.

If we want the coordinates to play  a symmetric role in the transition, for $\sigma$ a permutation of $\cco 1,d\ccf$ we can define $p_\sigma(x,v;\cdot)$ to be the law of $W_{\sigma^{-1}}$ when $W\sim p(x_{\sigma},v_{\sigma};\cdot)$, where $u_\sigma$ for $u\in\Z^d$ and $\sigma\in\mathfrak{S}_d$ denotes $(u_{\sigma(1)},\dots,u_{\sigma(d)})$. This accounts to use the order given by $\sigma$ to update the coordinates. Then
\[p_{sym}(x,v;\cdot) \ = \ \frac{1}{d!} \sum_{\sigma \in \Sigma_d} p_\sigma(x,v;\cdot)\]
corresponds to a transition where the order is sampled at random at each step of the Zig-Zag walk. There is no particular practical interest to consider $p_{sym}$ rather than $p$, and moreover any result on $p$ can straightforwardly be adapted to $p_\sigma$ by renumbering of the coordinates, and then to $p_{sym}$.

\subsection{Equilibrium and scaling limit}

\begin{prop}\label{Prop-ZZZd-invariance}
The probability distribution $\mu$ is invariant for the Zig-Zag walk on $\Z^d$.
\end{prop}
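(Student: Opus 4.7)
The plan is to write the transition $p$ as a composition of $d$ coordinate-wise Markov kernels, each of which acts as a one-dimensional Zig-Zag step on a single coordinate (with all other coordinates and velocities frozen). Since each such step preserves $\mu$ by the one-dimensional invariance already established in Section~\ref{Section-Dim1}, the composition does as well.

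\textbf{Step 1: Decomposition.} For each $i \in \cco 1,d\ccf$, define a Markov kernel $T_i$ on $\Z^d\times\{-1,1\}^d$ that updates only the pair $(x_i,v_i)$ using a one-dimensional Zig-Zag step on coordinate $i$ while keeping every $(x_j,v_j)$ for $j\neq i$ fixed: starting from $(x,v)$, set $v_i' = v_i$ and $x_i' = x_i + v_i$ with probability $q_i(x,v_i)$, and $v_i' = -v_i$, $x_i' = x_i$ otherwise. Reading off the definition \eqref{Eq-ZZZd-transition-p}, sampling $W\sim p(x,v;\cdot)$ coordinate by coordinate is exactly the sequential application of $T_1, T_2, \dots, T_d$ to $(x,v)$: at step $i$, only coordinate $i$ is updated (the shift in position is $(v_i+w_i)/2 \in \{0,v_i\}$), the shifts already made in coordinates $1,\dots,i-1$ are reflected in the base point $x + \sum_{j<i}\frac{v_j+w_j}{2}e_j$ used by $p_i$, and coordinates $j > i$ are untouched. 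Hence $p = T_d \circ T_{d-1}\circ\cdots\circ T_1$ as Markov kernels.

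\textbf{Step 2: Each $T_i$ preserves $\mu$.} Fix $i$ and fix the values $(x_j,v_j)_{j\neq i}$. Under $\mu$, the conditional law of $(x_i,v_i)$ is proportional to $\exp(-U(x))/2$ with respect to the counting measure on $\Z\times\{-1,1\}$, namely the one-dimensional Gibbs $\times$ uniform-velocity distribution associated with the potential $x_i \mapsto U(x_1,\dots,x_{i-1},x_i,x_{i+1},\dots,x_d)$. The kernel $T_i$ acts on this conditional as the one-dimensional Zig-Zag walk of Section~\ref{Section-Dim1} for that sliced potential (provided the slice has finite partition function; otherwise the conditional has zero mass on this slice and nothing need be checked). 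Section~\ref{Section-Dim1} has already established, via the Metropolis-plus-flip decomposition, that this one-dimensional walk leaves its associated conditional measure invariant. Integrating over the fixed coordinates yields $\mu T_i = \mu$.

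\textbf{Step 3: Conclude.} Since invariance is preserved under composition, $\mu T_i = \mu$ for every $i$ implies $\mu \, p = \mu (T_d\circ\cdots\circ T_1) = \mu$, which is the desired invariance.

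The only delicate point is the bookkeeping in Step~1 showing that the product over $i$ in \eqref{Eq-ZZZd-transition-p} really implements successive one-coordinate updates with the intermediate position consistent at each step; once this is established, the rest is a direct reduction to the one-dimensional invariance from Section~\ref{Section-Dim1}.
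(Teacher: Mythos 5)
Your proof is correct and follows essentially the same route as the paper: decompose the transition into $d$ successive one-coordinate kernels, note each leaves the conditional (slice) Gibbs measure invariant by the one-dimensional result of Section~\ref{Section-Dim1}, and conclude by composition. (Your caveat about slices with infinite partition function is superfluous, since each slice sum is bounded by $\mathcal Z<\infty$, but it does no harm.)
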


\begin{proof}
As proven in Section \ref{Section-Dim1}, for all fixed $(x_j,v_j)_{j\in\cco 2,d\ccf}$, the transition  on $\Z\times\{-1,1\}$ defined by
\[W_{n+1} \sim p_1\po [Y_n,x_2,\dots,x_d],[V_n,v_2,\dots,v_d];\cdot\pf\,,\qquad Y_{n+1} = \frac{W_n+W_{n+1}}{2}\]
admits the conditional law $(y,w)\mapsto \pi(y,x_2,\dots,x_d)/2$ as an invariant measure. As a consequence, the transition of  the Markov chain $(\tilde X_n,\tilde V_n)$ on $\Z^d\times\{-1,1\}^d$ with $(\tilde X_{n,1},\tilde V_{n,1})=(Y_n,W_n)$ and $(\tilde X_{n,j},\tilde V_{n,j})=(\tilde X_{0,j},\tilde V_{0,j})$ for $j\neq 1$ also fixes $\mu$. Since the transition of the Zig-Zag walk is the composition of $d$ such transitions, it fixes $\mu$.
\end{proof}

Remark that, if the target law is of a tensor form $\pi(x) = \prod_{i=1}^d \pi_i(x_i)$, then the coordinates of a Zig-Zag walk are just $d$ independent one-dimensional Zig-Zag walks (which is similar to the continuous-time process).

Recall that the continuous-time Zig-Zag process on $\R^d$ associated to a potential $H$ is the Markov process on $\R^d\times\{-1,1\}^d$ with generator
\[Lf(x,v) \ = \ v\cdot \na_x f(x,v) + \sum_{i=1}^d (v_i\partial_{x_i} H(x))_+ \po f\po x,v_{-i}\pf - f(x,v)\pf\,,\]
where we denote by $v_{-i}$ the vector of $\{-1,1\}^d$ obtained from $v$ by multiplying its $i^{th}$ coordinate by $-1$. The following is the extension of Theorem \ref{Thm-Dim1-scaling} in larger dimension. 

\begin{thm}\label{Thm-Dimd-scaling}
For $H\in\mathcal C^2(\R^d)$ that goes to infinity at infinity, for all $\varepsilon>0$, define $U_\varepsilon : \Z^d\mapsto \R$ by $U_\varepsilon(x) = H(\varepsilon x)$ for all $x\in\Z^d$. Let $(X^\varepsilon_k,V^\varepsilon_k)_{k\in\N}$ be the Zig-Zag walk on $\Z^d$ associated to $U_\varepsilon$ and with some initial condition $(x_0^\varepsilon,v_0)$. Suppose that $\varepsilon x_0^\varepsilon$ converges to some $x_0^*\in\R^d$ as $\varepsilon$ vanishes. Then 
\[\po \varepsilon X^\varepsilon_{\lfloor t/\varepsilon\rfloor},V^\varepsilon_{\lfloor t/\varepsilon\rfloor}\pf_{t\geqslant 0} \ \overset{law} {\underset{\varepsilon \rightarrow 0}\longrightarrow}\ \po Y_{t},W_{t}\pf_{t\geqslant 0}\,,\]
where $(Y_t,W_t)_{t\geqslant 0}$ is a  Zig-Zag process on $\R^d$ associated to $H$ and with $(Y_0,W_0)=(x_0^*,v_0)$.
\end{thm}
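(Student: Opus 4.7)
The plan is to derive the result as an application of Theorem \ref{Thm-Kallenberg}, with scaling $\eta(\varepsilon) = \varepsilon$ and $\kappa(\varepsilon) = 1$, viewing the discrete-space Zig-Zag walk on $\Z^d \times \{-1,1\}^d$ as a kinetic walk on $\R^d\times\R^d$ via the embedding described at the end of Section \ref{Section-general}. The candidate limit generator is $L = v\cdot\na_x + \tilde L$ with
\[
\tilde L f(x,v) \ = \ \sum_{i=1}^d \po v_i \partial_{x_i} H(x)\pf_+ \po f(x,v_{-i}) - f(x,v)\pf\,,
\]
which is precisely the generator of the continuous-time Zig-Zag process on $\R^d$ associated to $H$ from the statement of the theorem. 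That $L$ is Feller with $\mathcal C^2_c(\R^{2d})$ as a core is standard under $H\in\mathcal C^2$ with locally bounded jump rates, see e.g. \cite{BierkensDuncan}. Condition \eqref{Eq-Scaling-vitesse} is immediate: one has $|v-w|^2 \leqslant 4d$ for $v,w\in\{-1,1\}^d$ while, as shown below, the probability under $p(x/\varepsilon,v;\cdot)$ of changing at least one coordinate is $\mathcal O(\varepsilon)$, so the integral is $\mathcal O(\varepsilon)$ uniformly in $(x,v)$. The initial condition convergence is part of the hypotheses.

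The main step is therefore to check $\|\tilde L_\varepsilon f - \tilde L f\|_\infty \to 0$ as $\varepsilon \to 0$ for every $f\in\mathcal C^2_c(\R^{2d})$, where
\[
\tilde L_\varepsilon f(x,v) \ = \ \frac1\varepsilon \sum_{w\in\{-1,1\}^d} \po f(x,w) - f(x,v)\pf p(x/\varepsilon, v; w)\,.
\]
Using the factorisation \eqref{Eq-ZZZd-transition-p}, the probability of the single-coordinate-flip event $\{w_j = v_j \text{ for } j\neq i,\ w_i=-v_i\}$ reads $\prod_{j\neq i} q_j\cdot(1-q_i)$, each factor being evaluated at the sequentially-updated position $y^{(k-1)} = x/\varepsilon + \sum_{j<k}(v_j+w_j)e_j/2$. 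Since $U_\varepsilon(y^{(k-1)} + v_k e_k) - U_\varepsilon(y^{(k-1)}) = H(\varepsilon y^{(k-1)} + \varepsilon v_k e_k) - H(\varepsilon y^{(k-1)})$ and $\varepsilon y^{(k-1)} = x + \mathcal O(\varepsilon)$, a Taylor expansion using the $\mathcal C^2$ regularity of $H$ gives $1 - q_k = \varepsilon(v_k \partial_{x_k} H(x))_+ + \mathcal O(\varepsilon^2)$ and $q_k = 1 + \mathcal O(\varepsilon)$, uniformly on compact sets. Hence the single-flip probability at index $i$ equals $\varepsilon(v_i \partial_{x_i} H(x))_+ + \mathcal O(\varepsilon^2)$, while the probability of two or more coordinates flipping in a single step is $\mathcal O(\varepsilon^2)$. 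Summing over $w\in\{-1,1\}^d$ and dividing by $\varepsilon$ yields $\tilde L_\varepsilon f = \tilde L f + \mathcal O(\varepsilon)$ uniformly for $x$ in the (compact) support of $f$ and $v\in\{-1,1\}^d$, and Theorem \ref{Thm-Kallenberg} delivers the announced Skorohod convergence.

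The main subtlety lies in the sequential rather than simultaneous nature of the Gibbs-type updates in $p$: the argument of $H$ used to update coordinate $i$ is not $x$ but $x + \mathcal O(\varepsilon)$, and since the flip probability is later multiplied by $\varepsilon^{-1}$ one must verify that those position shifts do not produce a non-vanishing correction. The $\mathcal C^2$ regularity of $H$ is precisely what makes those shifts contribute only at order $\varepsilon^2$ inside each $1-q_k$, so they still vanish in the limit; with only $\mathcal C^1$ regularity, a more delicate argument would be required.
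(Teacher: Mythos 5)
Your proposal is correct and follows essentially the same route as the paper: identify the limit generator, invoke Theorem \ref{Thm-Kallenberg} (equivalently Kallenberg's Theorem 17.28 directly), and Taylor-expand the sequentially updated acceptance probabilities using the $\mathcal C^2$ regularity of $H$ to get $\|\tilde L_\varepsilon f-\tilde Lf\|_\infty\to0$, with the same attention to the fact that each coordinate update is evaluated at $x+\mathcal O(\varepsilon)$. The only divergence is cosmetic: the paper justifies the core property of $\mathcal C^1_c$ for the Zig-Zag generator via the smoothing/truncation approximation machinery of \cite{DurmusGuillinMonmarcheToolbox} rather than your citation, but the substance of the argument is identical.
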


\begin{proof}
Let us show that Theorem \ref{Thm-Kallenberg}, or rather directly \cite[Theorem  17.28]{Kallenberg}, applies. First, following \cite{DurmusGuillinMonmarche2018}, we can see that the continuous-time process can be smoothly and compactly approximated (in the sense of \cite[Definition 20]{DurmusGuillinMonmarcheToolbox}) by replacing its continuous jump rates by $\mathcal C^\infty$ jump rates (see the case of the BPS in \cite[Proposition 23]{DurmusGuillinMonmarcheToolbox} for details). From \cite[Theorem 21]{DurmusGuillinMonmarcheToolbox}, this proves that $\mathcal C^1_c(\R^d\times\{-1,1\}^d)$ is a core for the strong generator $L$ of the Zig-Zag process.
 Denote $L_\varepsilon$ the generator of $(\varepsilon X^\varepsilon_{N_t},V^\varepsilon_{N_t})_{k\in\N}$ where $(N_t)_{t\geqslant 0}$ is a Poisson process with intensity $1/\varepsilon$. Then all bounded measurable functions $f$ are in the domain of $L_\varepsilon$ and
\[L_\varepsilon f(x,v) \ = \ \frac{1}{\varepsilon}  \sum_{w\in\{-1,1\}^d} \po f\po   x+\varepsilon \frac{v+w}{2},w\pf  - f(x,v)\pf p_\varepsilon\po \frac x\varepsilon,v;w\pf\,,\]
 where $p_\varepsilon$ is given by \eqref{Eq-ZZZd-transition-p} with $U=U_\varepsilon$. Next, for all $i\in\cco 1,d\ccf$,
 \[\exp\po -\po U(x+\varepsilon v_i e_i) - U(x)\pf_+\pf \ = \ 1 - \varepsilon \po v_i\partial_{x_i}U(x)\pf_+ + \underset{\varepsilon\rightarrow 0}o(\varepsilon)\,,\]
 where the negligible term is uniform over all compact set of $\R^d\times\{-1,1\}^d$ since $H$ is $\mathcal C^2$ (and this will be the case for all the negligible terms below).  More generally, 
  \[\exp \po -\po U\po x+\varepsilon \sum_{j=1}^{i}v_j e_j\pf - U\po x+\varepsilon \sum_{j=1}^{i-1}v_j e_j\pf\pf_+\pf  \ = \ 1 - \varepsilon \po v_i\partial_{x_i}U(x)\pf_+ + \underset{\varepsilon\rightarrow 0}o(\varepsilon)\,,\]
  from which 
  \begin{eqnarray*}
  p_\varepsilon\po \frac x\varepsilon,v;w\pf & = &  \prod_{i=1}^d \po \delta_{v_i}(w_i) + \varepsilon \po v_i\partial_{x_i}U(x)\pf_+ \po \delta_{-v_i}(w_i) - \delta_{v_i}(w_i)\pf + \underset{\varepsilon\rightarrow 0}o(\varepsilon)\pf    \\
  & = & \delta_v(w) + \varepsilon \sum_{i=1}^d \po v_i\partial_{x_i}U(x)\pf_+ \po \delta_{v-2v_ie_i}(w) - \delta_{v}(w)\pf + \underset{\varepsilon\rightarrow 0}o(\varepsilon) \,.
  \end{eqnarray*}
  On the other hand, if $f\in\mathcal C^2_c(\R^d\times\{-1,1\}^d)$,
  \[f\po   x+\varepsilon \frac{v+w}{2},w\pf \ = \ f\po x,w\pf + \varepsilon  \frac{v+w}{2}\ \cdot \na_x f\po x,w \pf + \underset{\varepsilon\rightarrow 0}o(\varepsilon) \,, \]
so that
\[ \| L_\varepsilon f - L f\|_\infty \ \underset{\varepsilon\rightarrow 0}\longrightarrow \ 0\,,\]
and  Theorem \cite[Theorem  17.28]{Kallenberg}  concludes.
\end{proof}

Note that the space/time scaling in Theorem \ref{Thm-Dimd-scaling} is ballistic. It means in particular that, in $n=\lfloor 1/\varepsilon\rfloor$ steps, the Zig-Zag walk with potential $U_\varepsilon$ is at distance of order $n$ (and not $\sqrt n$ as in the diffusive case) from its starting point.

\subsection{Thinning and factorization}\label{SubSection-FactorisationZZZ}

In order to sample the Zig-Zag walk, a priori, at each time step, $U(x)$ has to be computed for $d+1$ values of $x$. However, thanks to the thinning method recalled in Section \ref{Section-thinning}, this computational cost may drop if simple bounds are known on the increments of $U$. 

Moreover, the factorization principle used for the continuous-time Zig-Zag process in \cite{BierkensFearnheadRoberts} to do subsampling is still available here. Suppose that we can decompose $U(x+v) - U(x) = \sum_{j=1}^M f_j(x,v)$ for all $(x,v)\in\Z^d\times\{-1,1\}^d$ for some $M\in\N_*$ and $f_j:\Z^d\times\{-1,1\}^d\rightarrow\R$. This is for instance the case if $U = \sum_{j=1}^M U_j$, in which case we can take $f_j(x,v) = U_j(x+v) - U_j(x)$, but in general the $f_j$'s are not required to be discrete gradients. Consider the Zig-Zag walk as defined above except that the probability $q_i(x,v_i)$ is replaced by
\[\tilde q_i(x,v_i) \ = \  \prod_{j=1}^M e^{-\po f_j(x,v_ie_i)\pf_+}\,.\]

\begin{prop}
This Zig-Zag walk with $\tilde q_i$ still admits $\mu$ as an invariant measure.
\end{prop}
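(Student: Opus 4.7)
The plan is to follow the proof of Proposition~\ref{Prop-ZZZd-invariance}: the full transition $p$ decomposes as a composition, over $i\in\cco 1,d\ccf$, of one-dimensional Gibbs-like steps updating only the pair $(x_i,v_i)$ with the other coordinates frozen. It therefore suffices to verify that each such one-dimensional step preserves the conditional distribution $\pi(\cdot\mid x_{-i})/2$ on $\Z\times\{-1,1\}$. As in Section~\ref{Section-Dim1}, this one-dimensional step is itself the composition of (a) the deterministic symmetric kernel $(x,v_i)\mapsto (x+v_ie_i,-v_i)$, (b) an accept/reject layer with acceptance probability $\tilde q_i(x,v_i)$ (on rejection, the $i$-th velocity is flipped), and (c) the deterministic flip $(x,v_i)\mapsto(x,-v_i)$. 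Steps (a) and (c) preserve $\mu$ regardless of the acceptance choice, so everything reduces to checking that (b) preserves $\mu$.

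For (b) I would verify the detailed-balance identity
\[\pi(x)\,\tilde q_i(x,v_i)\;=\;\pi(x+v_ie_i)\,\tilde q_i(x+v_ie_i,-v_i)\]
for every $(x,v)\in\Z^d\times\{-1,1\}^d$. Taking logarithms and using $\pi(x)/\pi(x+v_ie_i)=\exp\bigl(\sum_j f_j(x,v_ie_i)\bigr)$, this reduces to
\[\sum_{j=1}^M f_j(x,v_ie_i) \;=\; \sum_{j=1}^M \bigl[(f_j(x,v_ie_i))_+ - (f_j(x+v_ie_i,-v_ie_i))_+\bigr]\,.\]
Under the (implicit, and automatic in the motivating case $f_j(x,v)=U_j(x+v)-U_j(x)$) antisymmetry $f_j(x+v_ie_i,-v_ie_i)=-f_j(x,v_ie_i)$, one has $(f_j(x+v_ie_i,-v_ie_i))_+=(f_j(x,v_ie_i))_-$, and the elementary identity $a_+-a_-=a$ collapses the right-hand side to the left-hand side, giving detailed balance.

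Once (b) is shown to preserve $\mu$, each one-dimensional Gibbs step does, and hence so does their composition, which is the full transition $p$.

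The main non-routine point, and the only place where care is needed, is recognising that antisymmetry of \emph{each} $f_j$ (not only of the total $\sum_j f_j$) is what makes the telescoping work: if the decomposition is not of this ``discrete-gradient-like'' form, the detailed-balance identity above fails in general and $\tilde q_i$ is not a valid acceptance rule. Once this point is granted, the remainder of the argument is a line-for-line transcription of the proof of Proposition~\ref{Prop-ZZZd-invariance}.
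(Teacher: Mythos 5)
Your argument is correct and essentially the paper's own: reduce to $d=1$ by the composition argument of Proposition \ref{Prop-ZZZd-invariance}, then check that the factorized acceptance probability balances $\pi$ --- the paper does this through the Metropolis ratio criterion applied to $\mu(x)=C\prod_j g_j(x)$ with $g_j=e^{-U_j}$ and the identity $\min(1,a/b)=\exp(-(\ln(a/b))_+)$, which amounts to the same computation as your direct verification of $\pi(x)\,\tilde q_i(x,v_i)=\pi(x+v_ie_i)\,\tilde q_i(x+v_ie_i,-v_i)$ via $a_+-a_-=a$. Your closing caveat is also well taken: the paper's proof, like yours, genuinely uses the per-term antisymmetry $f_j(x+v_ie_i,-v_ie_i)=-f_j(x,v_ie_i)$ (automatic for discrete gradients $f_j(x,v)=U_j(x+v)-U_j(x)$), and the sum decomposition of $U(x+v)-U(x)$ alone would not yield this balance identity in general.
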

\begin{proof}
As in the proof of Proposition \ref{Prop-ZZZd-invariance}, we just have to prove the result for $d=1$. Following Section \ref{Section-Dim1},  this stems from the same result applied to classical Metropolis-Hastings algorithms. Indeed, let $q$ be a symmetric Markov kernel on a space $E$ and let $\alpha:E\times E\rightarrow [0,1]$ be the acceptance probability of a Metropolis-Hastings  chain with proposal $q$, namely a chain with transition kernel $p(x,y)=q(x,y)\alpha(x,y)$ for $y\neq x$. Then this chain is reversible with respect to a probability $\mu$ if and only if
\[\frac{\alpha(x,y)}{\alpha(y,x)} \ = \ \frac{\mu(x)}{\mu(y)}\qquad \forall x,y\in E\,.\]
In particular, if $\mu(x) = C\prod_{i=1}^M g_i(x)$ for some positive $g_i$'s and a normalization constant $C$ then, setting $\alpha_i(x,y) = \min\po 1,g_i(x)/g_i(y)\pf$ for all $i\in\cco 1,M\ccf$ and $x,y\in E$ ensures that
\[\frac{\alpha_i(x,y)}{\alpha_i(y,x)} \ = \ \frac{g_i(x)}{g_i(y)}\qquad \forall i\in\cco 1,M\ccf\,, x,y\in E\,.\]
Taking the product over $i\in\cco 1,M\ccf$ and noting that $\min(1,a/b) = \exp\po -(\ln(a/b))_+\pf$ for $a,b>0$ concludes.
\end{proof}

The bad side of factorization is that it increases the number of rejections (for the Metropolis-Hastings algorithm, hence of collisions for the Zig-Zag process). On the other hand, if $U(x+v)-U(x)$ can be decomposed in a part that is cheap to compute and a part that may be expensive to compute but  is small and for which an efficient bound is available for thinning, then it may give a significant computational gain. This is particularly well-adapted for multi-scale potentials, as we can see in Section \ref{Section-Schema-Euler} on a similar problem.

\subsection{Irreducibility, Ergodicity, CLT}

\subsubsection{Irreducibility}

Irreducibility is a delicate question for the continuous Zig-Zag process, see \cite{BierkensRobertsZitt}. Here, for the discrete Zig-Zag walk,  we will only tackle the restrictive case where, following the definitions of \cite{BierkensRobertsZitt},  all velocities are asymptotically flippable\footnote{The nice proof of irreducibility of \cite{BierkensRobertsZitt} under weaker conditions on $U$ may possibly be partially adapted    for the discrete Zig-Zag walk. Nevertheless, note that the smoothness condition on $U$ has no discrete counterpart. In any case, the potential $U(x)=\|x\|_\infty$ would still be a counter-example for which the conclusion of  Proposition~\ref{Prop-ZZZd-Irreductible} would not hold.} and thus the proof is similar to the Gaussian case with dominant diagonal of \cite[Corollary 1]{BierkensRobertsZitt}. Anyway we are interested in the exponentially fast convergence toward equilibrium under the assumption of Proposition \ref{Prop-ZZZd-Lyapunov} below, which is even stronger. Moreover, with this restriction, we can focus on the specificities of the discrete realm.

Indeed, let us call $\sigma(x,v):=\po (-1)^{x_i}v_i\pf_{i\in\cco 1,d\ccf}\in\{-1,1\}^d$ the signature of $(x,v)\in\Z^d\times\{-1,1\}^d$. If $(X_n,V_n)_{n\in\N}$ is a Zig-Zag walk on $\Z^d$ then, like in the one-dimensional case, $\sigma(X_n,V_n)=-\sigma(X_{n-1},V_{n-1})$. In particular, denoting $\mathcal A_s = \{(x,v)\in\Z^d\times\{-1,1\}^d\ : \ \sigma(x,v)=s\}$ then $\mathcal A_s \cup \mathcal A_{-s}$ is fixed by the Zig-Zag walk (see Figure \ref{Fig-ZZd-1}). Therefore, in dimension larger than 1, the Zig-Zag walk is not irreducible on $\Z^d\times \{-1,1\}^d$.

\begin{figure}
\begin{center}
\includegraphics[scale=0.5]{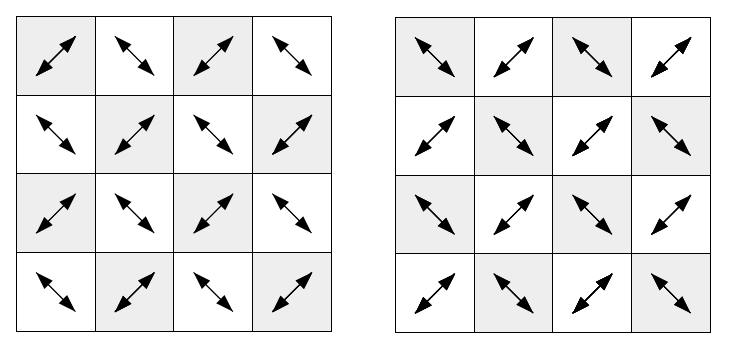}
\caption{Representation of the two irreducible classes of the Zig-ZagZ walk in dimension 2. For instance, if we start in a grey box with velocity $(1,1)$, then whenever we will be in a grey box the velocity will be either $(1,1)$ or $(-1,-1)$ and whenever we will be in a white box the velocity will be $(1,-1)$ or $(-1,1)$. This corresponds to the array on the left.}\label{Fig-ZZd-1}
\end{center}
\end{figure}

\begin{prop}\label{Prop-ZZZd-Irreductible}
Suppose that there exist $R>0$ such that $U(x+v_ie_i)  > U(x)$ for all $i\in\cco 1,d\ccf$, $x\in\Z^d$, $v\in\{-1,1\}^d$ with $x_iv_i > R $.
 Then for all $s\in\{-1,1\}^d$ the Zig-Zag walk on $\Z^d$ associated to $U$ is irreducible on $\mathcal A_s \cup \mathcal A_{-s}$.
\end{prop}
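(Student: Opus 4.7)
As observed just before the statement, $\sigma(X_n,V_n)$ is multiplied by $-1$ at each step, so $\mathcal A_s\cup\mathcal A_{-s}$ is invariant under the chain and irreducibility on any strictly larger set is impossible. I plan to show irreducibility on each class $\mathcal A_s\cup\mathcal A_{-s}$ by constructing, for any two states $(x,v),(y,w)$ in this class, an explicit positive-probability path between them.

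My construction will rely on two observations. (A) The ``no flip at every coordinate'' transition $(x,v)\mapsto(x+v,v)$ has positive probability from every state, since each factor $q_i(\cdot,v_i)=\exp(-(U(\cdot+v_ie_i)-U(\cdot))_+)$ lies in $(0,1]$; iterating, for every $T\in\N$ the chain goes from $(x,v)$ to $(x+Tv,v)$ in $T$ steps with positive probability. (B) For $T>R+\max_i|x_i|$, the point $x^*:=x+Tv$ satisfies $x^*_iv_i>R$ for all $i$; I will call $(x^*,v)$ a \emph{pushback state for $v$}. At such a state, by the sequential structure of the Gibbs update, when coordinate $i$ is processed during one step the $i$-th component of the current position is still $x^*_i$ and the $i$-th velocity is still $v_i$, so the confining hypothesis gives $q_i<1$ and the flip probability is positive. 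Consequently, from $(x^*,v)$, each of the $2^d$ subsets $S\subseteq\{1,\dots,d\}$ of flipped coordinates occurs in a single step with positive probability, the resulting state being $(x^*+\sum_{i\notin S}v_ie_i,w)$ with $w_i=-v_i$ for $i\in S$ and $w_i=v_i$ otherwise.

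Given (A) and (B), I will build a path from $(x,v)$ to $(y,w)$ in three stages. First, apply $T_0$ no-flip steps to $(x,v)$ via (A) to reach a pushback state $(x^*,v)$. Second, perform a sequence of ``one-coordinate super-moves'', one for each $i\in\{1,\dots,d\}$: starting from a pushback state for the current velocity (equal to $v$), use (B) to flip only coordinate $i$, then use (A) until reaching a pushback state for the new velocity, then use (B) again to flip coordinate $i$ back, and use (A) once more until again in pushback for $v$. A direct calculation shows that such a super-move with run-length parameters $T_i,T'_i$ shifts the position by $(T'_i-T_i)v_ie_i+(T_i+T'_i+2)\sum_{j\neq i}v_je_j$ while leaving the velocity equal to $v$. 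Composing the $d$ super-moves and solving the resulting linear system in the $2d$ parameters $(T_i,T'_i)$ will realize any target position shift compatible with the signature parity. Third, one last application of (B) from a pushback state for $v$ flips the subset producing the target velocity $w$, and a short no-flip tail brings the chain to $(y,w)$; the parity of the total step count is adjusted by the choice of $T_0$.

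The hard part will be the bookkeeping of the second stage, i.e.\ solving the linear system in $(T_i,T'_i)$ while ensuring each intermediate pushback condition holds and the signature parity is matched. This is a finite and explicit computation, after which the conceptual content of the proof reduces entirely to observations (A) and (B).
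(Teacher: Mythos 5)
Your observations (A) and (B) are correct and are exactly the elementary moves the paper uses: the no-flip step always has positive probability, and at a state with $x_iv_i>R$ for all $i$ the sequential structure of the update lets you flip any prescribed subset of coordinates in one step with positive probability. The gap is in the second stage. Each of your single-coordinate super-moves shifts the position by $(T'_i-T_i)v_ie_i+(T_i+T'_i+2)\sum_{j\neq i}v_je_j$: to move coordinate $i$ backwards by $T_i-T'_i$ you necessarily push \emph{every other} coordinate forwards by $T_i+T'_i+2$, which is at least as large. Writing $S=\sum_i(T_i+T'_i)$ and $a_k=v_k(y_k-x_k)$, composing the $d$ super-moves forces $T_k=\frac{S+2(d-1)-b_k}{2}$ for each $k$ (where $b_k$ is the stage-2 part of $a_k$), and the consistency requirement $\sum_k T'_k=S-\sum_kT_k\geqslant 0$ becomes $\sum_k b_k\geqslant (d-2)S+2d(d-1)>0$. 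Since stage 1 and (when $w=v$) the stage-3 tail only add further forward motion, your template can never reach a target with $\sum_k v_k(y_k-x_k)\leqslant 0$. Concretely, for $d=2$, the state $(x-2v,v)$ lies in $\mathcal A_s\cup\mathcal A_{-s}$ but the linear system gives $T_1+T_2=T_0+S+T_f+5>S$, contradicting $T_1+T_2+T'_1+T'_2=S$ with $T'_k\geqslant 0$. So the ``finite and explicit computation'' you defer cannot be completed: the system has no admissible solution for targets behind the starting point, for any $d\geqslant 2$.

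The missing idea is a move that lets the walk \emph{retrace} its forward excursion, and your own observation (B) already contains it: at a pushback state you may flip all coordinates at once (or all but one). This is how the paper proceeds: it first shows $(x,-v)$ is reachable from $(x,v)$ by going out along $v$, reversing the full velocity, and coming straight back; then it reaches $(x+ae_j,-v+2v_je_j)$ by a path that flips coordinate $j$, travels, flips \emph{all coordinates except} $j$, travels back, and flips $j$ again, so that the transverse coordinates are undone rather than accumulated. Iterating these two moves generates all of $\mathcal A_s\cup\mathcal A_{-s}$. If you replace your per-coordinate super-moves by excursions that end with a full (or all-but-one) reversal followed by a straight return, your construction can be repaired along these lines; as written, however, it does not prove the proposition.
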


\begin{proof}
Let $s\in\{-1,1\}^d$ be fixed, and let $(x,v)\in\mathcal A_s\cup \mathcal A_{-s}$. Remark that $(x,w)\in\mathcal A_s\cup\mathcal A_{-s}$ if and only if $w\in\{-v,v\}$. We say that we can reach $(y,w)$ from $(x,v)$ if there is a path from $(x,v)$ to $(y,w)$ that has a non-negative probability for the Zig-Zag walk. Starting from  $(x,v)\in\Z^d\times \{-1,1\}^d$, we can reach all the points $(x+nv,v)$ with $n\in \N$. For $n$ large enough, $v_i(x_i+nv_i)>R$ for all $i\in\cco 1,d\ccf$ so that each coordinate has a non-negative probability to flip its velocity in the next step. As a consequence, from $(x+nv,v)$ with such a $n$, $(x+nv+(v+w)/2,w)$ can be reached for all $w\in\{-1,1\}^d$. In particular, if $w=-v$, since $(x,-v)$ can be reached from $(x+nv,-v)$, we see by transitivity that $(x,-v)$ can be reached from $(x,v)$. 


Second, let us show that for all $j\in\cco 1,d\ccf$ and all $a\in\{-1,1\}$, $(x+ae_j,-v+2v_je_j)$ can be reached from $(x,v)$. Remark that this will conclude the proof: indeed, repeating this, then for all $x'\in\Z^d$, there will exist $w'\in\{-1,1\}^d$ such that $(x',w')$ (and then $(x',-w')$ by the previous result) can be reached from $(x,v)$. Since $\mathcal A_s\cup \mathcal A_{-s}$ is fixed by the Zig-Zag walk, if $(x',z)\in \mathcal A_s\cup \mathcal A_{-s}$ then necessarily $z\in\{w',-w'\}$, and thus we will have obtained that all points of $\mathcal A_s\cup \mathcal A_{-s}$ will be reachable from $(x,v)$. 

Hence, fix $j\in\cco 1,d\ccf$, $a\in\{-1,1\}$ and set $(x',v')=(x+ae_j,-v+2v_je_j)$. Since $(x,-v)$ can be reached from $(x,v)$ we can suppose that $v_j=-a$. Let $n_1$ and $n_2$ be large enough so that,  for all $i\in\cco 1,d\ccf$,
\[v_i(x_i+n_1v_i)>R\,,\qquad -v_i'(x_i'-n_2v_i')>R\,,\qquad -v_i'(x_i+(n_1+1)v_i-v_je_j -n_2v_i' ) >R\,.\]
Consider the following path: from $(x,v)$, go to $(x+n_1v,v)$, flip the $j^{th}$ velocity, which gives $(x+(n_1+1)v-v_je_j,-v')$, go straight to 
 $(x+(n_1+1)v-v_je_j-n_2v',-v')$, flip all the velocities but the $j^{th}$, which gives $(x+(n_1+1)v-2v_je_j-n_2v',-v)$, go straight to $(x+v-2v_je_j-n_2v',-v)$, flip the $j^{th}$ velocity, which gives $(x-v_je_j-n_2v',v')$ and go straight to $(x',v')$ (see Figure \ref{Fig-ZZd-2}). It is clear that, in view of the conditions on $n_1$ and $n_2$ the two first flips have a non-negative probability. For the third one, remark that the $j^{th}$ coordinate of $x+v-2v_je_j-n_2v'$ is $x_j'-n_2v_j'$ and that $v_j' = v_j$. Hence, the condition that  $-v_i'(x_i'-n_2v_i')>R$ ensures that the third flip, hence the whole path, has a non-negative probability, which concludes.


\begin{figure}
\begin{center}
\includegraphics[scale=0.5]{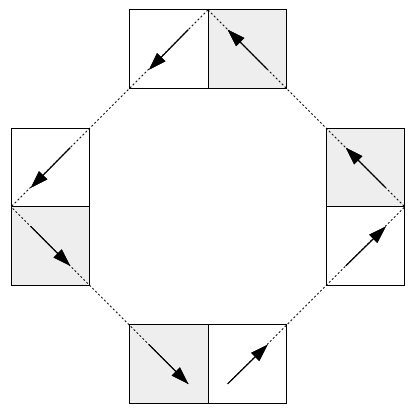}
\caption{An admissible path from $(x,(1,1))$ to $(x-e_1,(1,-1))$.}\label{Fig-ZZd-2}
\end{center}
\end{figure}

%
\end{proof}

\subsubsection{A Lyapunov function}\label{SubSection-ZZZd-Lyapunov}

For some fixed $a,b>0$ and for $x\in\Z^d$, $v\in\{-1,1\}^d$ and $i\in\cco 1,d\ccf$, denote
\[\mathcal V_i(x,v) \ = \ e^{a |x_i| + b\1_{\{x_iv_i>0\}}}\,,\]
and $\mathcal V(x,v) = \sum_{i=1}^d \mathcal V_i(x,v)$. Consider $(X_n,V_n)_{n\in\N}$ the Zig-Zag walk on $\Z^d$ associated to $U:\mathbb Z^d \rightarrow \R$ and denote $Q$ its transition operator, namely
\[Qf(x,v) \ = \ \sum_{w\in\{\pm1\}^d} f\po x + \frac{v+w}{2}, w\pf p(x,v;w)\,.\]

\begin{prop}\label{Prop-ZZZd-Lyapunov}
 Suppose that there exist $R,h>0$ such that for all $i\in\cco 1,d\ccf$, $x\in\Z^d$ and $v\in\{-1,1\}^d$,
 \begin{eqnarray}\label{Eq-ZZZd-Cond-U}
 \po U(x+v_ie_i) - U(x)\pf_+ & \geqslant & h \1_{\{v_i x_i > R\}}\,.
 \end{eqnarray}
Then, for all choice of $a,b>0$ and for all $x\in\Z^d$ and $v\in\{-1,1\}^d$,
\begin{eqnarray}\label{Eq-ZZZd-Lyapunov}
Q \mathcal V(x,v) & \leqslant &\max\po e^{-h+a} +(1-e^{-h})e^{-b},e^{-a} \pf \mathcal V(x,v) + d e^{a(R+1)+b}\,.
\end{eqnarray}
\end{prop}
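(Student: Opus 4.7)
My plan is to use the additive decomposition $\mathcal V = \sum_{i=1}^d \mathcal V_i$ together with the linearity of $Q$ to reduce the problem to bounding $Q\mathcal V_i(x,v)$ coordinate by coordinate. The factorized form of $p$ in \eqref{Eq-ZZZd-transition-p} lets me condition on the intermediate state $x^{(i)} := x + \sum_{j<i} (v_j+W_j)/2 \cdot e_j$ reached right before the $i$-th coordinate update. Crucially $(x^{(i)})_i = x_i$, so the hypothesis \eqref{Eq-ZZZd-Cond-U} at $x^{(i)}$ still gives $q_i(x^{(i)}, v_i) \leq e^{-h}$ whenever $v_i x_i > R$, and the randomness in the first $i-1$ updates factors through cleanly.

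I would then split into three cases according to $v_i x_i$. \textbf{Case 1} ($v_i x_i > R$): here $q_i \leq e^{-h}$, the no-flip outcome gives $\mathcal V_i(X_{new}, W) = e^{a(|x_i|+1)+b}$ and the flip outcome gives $\mathcal V_i(X_{new}, W) = e^{a|x_i|}$; factoring out $\mathcal V_i(x,v) = e^{a|x_i|+b}$ and using monotonicity of the resulting ratio in $q_i$ (since $a,b>0$) bounds the conditional expectation by $(e^{-h+a} + (1-e^{-h}) e^{-b}) \mathcal V_i(x,v)$. \textbf{Case 2} ($-R-1 \leq v_i x_i \leq R$, equivalently $|x_i| \leq R+1$): both $\mathcal V_i(x,v)$ and $\mathcal V_i(X_{new}, W)$ are deterministically bounded by the constant $e^{a(R+1)+b}$. \textbf{Case 3} ($v_i x_i < -R-1$, the deep inward regime): this is the main trick. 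I apply hypothesis \eqref{Eq-ZZZd-Cond-U} at the shifted point $y := x^{(i)} + v_i e_i$ with the flipped velocity $\tilde v := v - 2 v_i e_i$ (so that $\tilde v_i = -v_i$ and $\tilde v_i y_i = -v_i x_i - 1 > R$). The hypothesis then forces $U(x^{(i)}) - U(x^{(i)} + v_i e_i) \geq h > 0$, so $(U(x^{(i)} + v_i e_i) - U(x^{(i)}))_+ = 0$ and $q_i(x^{(i)}, v_i) = 1$: coordinate $i$ cannot flip. Since alignment remains negative ($v_i(x_i + v_i) \leq -R-1 < 0$), one obtains $\mathcal V_i(X_{new}, W) = e^{a(|x_i|-1)} = e^{-a}\mathcal V_i(x,v)$.

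Combining the three cases gives the pointwise bound $Q\mathcal V_i(x,v) \leq \max\po e^{-h+a} + (1-e^{-h})e^{-b}, e^{-a}\pf \mathcal V_i(x,v) + e^{a(R+1)+b}$ (the first term dominating in Cases 1 and 3, and the constant absorbing Case 2); summing over $i \in \cco 1,d\ccf$ yields \eqref{Eq-ZZZd-Lyapunov}. The main obstacle is Case 3: the inward contraction factor $e^{-a}$ is not directly supplied by hypothesis \eqref{Eq-ZZZd-Cond-U}, and it takes the symmetric reapplication of that hypothesis at $y$ with flipped velocity $\tilde v$ to turn the outward-uphill assumption into the inward-downhill statement that forces $q_i = 1$. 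Cases 1 and 2 are then essentially accounting once the correct cutoffs have been located.
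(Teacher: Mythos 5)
Your proof is correct and follows essentially the same route as the paper: decompose $\mathcal V=\sum_{i=1}^d\mathcal V_i$, reduce to the one-dimensional update of coordinate $i$ (legitimate because the intermediate position $x^{(i)}$ has the same $i$-th coordinate as $x$), and bound by cases on $v_ix_i$, obtaining the factors $e^{-h+a}+(1-e^{-h})e^{-b}$, $e^{-a}$ and the constant $e^{a(R+1)+b}$. Your placement of the inward cutoff at $v_ix_i<-R-1$ (with the borderline $v_ix_i\in[-R-1,-R)$ absorbed into the constant) is in fact slightly more careful than the paper's split at $-R$, since the conclusion $q_i=1$ only follows from \eqref{Eq-ZZZd-Cond-U} applied at the shifted point with flipped velocity when $v_ix_i<-R-1$.
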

\begin{proof}
For all  $x\in\Z^d$, $v\in\{-1,1\}^d$,
\[ Q\mathcal V_1(x,v) \ = \  q_1(x,v_1) e^{a|x_1+v_1|+b\1_{\{(x_1+v_1)v_1>0\}}} + \po 1- q_1(x,v_1)\pf e^{a|x_1|+b\1_{\{x_1v_1<0\}}}\,.\]
If $x_1v_1>R$, $q_1(x,v_1) \leqslant e^{-h}$, so that
\begin{eqnarray*}
 Q\mathcal V_1(x,v) &=& q_1(x,v_1) e^{a|x_1|+a+b} + \po 1- q_1(x,v_1)\pf e^{a|x_1|}\\
 & \leqslant & e^{a|x_1|} + e^{-h} \po e^{a|x_1|+a+b} - e^{a|x_1|}\pf\\
 & = & \po e^{-b} + e^{-h+a} - e^{-h-b}\pf \mathcal V_1(x,v)\,.
\end{eqnarray*}
If $x_1v_1<-R$ then $(x_1+v_1)v_1 \leqslant 0$ and $q_1(x,v_1)=1$, and thus
\[  Q\mathcal V_1(x,v) \ = \   e^{a|x_1|-a} \ = \ e^{-a} \mathcal V_1(x,v)\,.\]
If $|x_1v_1|\leqslant R$ then
\[Q\mathcal V_1(x,v) \ \leqslant \ e^{a(R+1)+b}\,.\]
Since $Q$ is the result of $d$ consecutive and identical one-dimensional transitions, we get the result by summing over $i\in\cco 1,d\ccf$.
\end{proof}

Taking $a=h/2$ and $b$ arbitrarily large we get that $Q\mathcal V \leqslant \gamma \mathcal V + C$ with $\gamma<1$ (in fact $\gamma$ arbitrarily close to $e^{-h/2}$), which means that $\mathcal V$ is a Lyapunov function for $Q$.

Remark that similar computations in the case of the continuous-time Zig-Zag process on $\R^d$ show that if $(v_i\partial_{x_i} U(x))_+\geqslant h\1_{\{x_iv_i>0\}}$ for all $x\in\R^d$ such that $|x_i|>R$ then
\[\widetilde{\mathcal V}(x,v) \ := \ \sum_{i=1}^d e^{a|x_i| + b \varphi(v_ix_i)}\,,\]
where $\varphi(s)$ is some smooth approximation of $sign(s)$, is a Lyapunov function for the continuous-time process. Note that this condition on $U$ is not covered by \cite[Condition 3]{BierkensRobertsZitt} since the latter constrains $|\na U(x)|$ to go to infinity at infinity, excluding Laplace-tail distributions. Hence, our computations extends the scope of Lemma 2 (hence Theorem 2) of \cite{BierkensRobertsZitt}. Note that condition \eqref{Eq-ZZZd-Cond-U} holds when $U(x) = \sum_{i=1}^d |x_i|$ but not when $U(x) = |x|$. Of course, condition \eqref{Eq-ZZZd-Cond-U} roughly means that the different coordinates are more or less independent at infinity and thus it is not surprising that we recover, in a discrete-space case, the one-dimensional computations of \cite[Proposition 2.8]{FontbonaGuerinMalrieu}.

In the following, under condition \eqref{Eq-ZZZd-Cond-U}   we fix $a=h/2$ and $b$ such that $e^{-b} = e^{-h/4}-e^{-h/2}$, in which case \eqref{Eq-ZZZd-Lyapunov} implies
\[Q\mathcal V (x,v) \ \leqslant \ e^{-h/4} \mathcal V(x,v) + \po 1-e^{-h/4}\pf d \frac{e^{h(R/2+1)}}{\po 1-e^{-h/4}\pf^2}\,.\]
This classically yields uniform in time exponential moment bounds on the Zig-Zag walk, since for all $n\in\N$,
\[Q^n \mathcal V(x,v) \ \leqslant e^{-n h/4} \mathcal V(x,v) + \po 1-e^{-nh/4}\pf d \frac{e^{h(R/2+1)}}{\po 1-e^{-h/4}\pf^2}\,\]
and thus, if $(X_0,V_0)=(x,v)$, we bound
\[\mathbb E \po e^{\frac{h}{2d} |X_n|} \pf \leqslant \mathbb E \po e^{\frac{h}{2d} \sum_{i=1}^d |X_{n,i}|} \pf \leqslant \frac1d  \mathbb E \po \mathcal V(X_n,V_n)\pf  =  \frac1d Q^n\mathcal V(x,v)\,.\]

\subsubsection{Long-time convergence}\label{Section:longtimeZZd}

For $x\in\Z^d$ and $s\in\{-1,1\}^d$, let $v(x,s)$ be the (unique) vector of $\{-1,1\}^d$ such that $\sigma(x,v(x,s))=s$. Then $\mathcal A_s   = \{(x,v(x,s)) \,, x\in\Z^d\}$ and $ \mathcal A_{-s} = \{(x,-v(x,s))\,, x\in\Z^d\}$. Consider on $\mathcal A_s$ the probability measure
\[\mu_s(x,v) \ =\ \pi(x)\delta_{v(x,s)}(v) \ = \ \frac{\mu(x,v)\1_{(x,v)\in \mathcal A_s}}{\mu(\mathcal A_s)}\,.\]
For a given $\mathcal W:\Z^d\times\{-1,1\}^d \rightarrow [1,+\infty)$, we endow $\mathcal P_{\mathcal W}:=\{\mu\in\mathcal P(\Z\times \{-1,1\}^d)\ : \mu(\mathcal W) < +\infty\}$ with the norm
\[\| \mu - \nu\|_{\mathcal W} \ := \ \sup_{|f|\leqslant \mathcal W}\po \mu(f)-\nu(f)\pf\,,\]
which makes it complete.

\begin{thm}\label{TheoremZZZdErgodic}
 Suppose that $U$ admits a strict local minimum at some $x_*\in\mathbb Z^d$ and that there exist $R,h>0$ such that \eqref{Eq-ZZZd-Cond-U} holds for all $i\in\cco 1,d\ccf$, $x\in\Z^d$ and $v\in\{-1,1\}^d$. Set $\mathcal W(x,v) = \sum_{i=1}^d e^{h|x_i|/2}$.  Then, there exist $C>0$ and $\rho\in(0,1)$ such that for all $x\in\Z^d$, $v\in\{-1,1\}^d$ and  $n\in\N_*$,
 \[\| \delta_{(x,v)}Q^{2n} - \mu_{\sigma(x,v)}\|_{\mathcal W} \ \leqslant \ C \rho^{n} \mathcal W(x,v)\,.\]
\end{thm}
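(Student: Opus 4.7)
The strategy is to apply a Harris-type $\mathcal{W}$-geometric ergodicity theorem (in the spirit of Hairer--Mattingly) to $Q^2$ on each invariant class $\mathcal A_s$ separately. Since $\sigma(X_n,V_n)=-\sigma(X_{n-1},V_{n-1})$, the operator $Q^2$ preserves every $\mathcal A_s$, and the invariance of $\mu$ given by Proposition~\ref{Prop-ZZZd-invariance} (combined with the splitting of $\mu$ across the $2^d$ classes) forces $\mu_s$ to be $Q^2$-invariant on $\mathcal A_s$; as there are only finitely many signatures, it is enough to produce constants $C_s,\rho_s$ on each $\mathcal A_s$ and then take the worst. The Lyapunov ingredient is essentially handed to us by Proposition~\ref{Prop-ZZZd-Lyapunov}: choosing $a=h/2$ and $b$ large, one has $Q\mathcal V\leqslant e^{-h/4}\mathcal V+C_0$, and since $\mathcal V$ and $\mathcal W$ differ only by a bounded factor $e^b$, iterating yields a geometric $\mathcal W$-drift for $Q^2$.

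The real work is the minorization on the sublevel sets $K_L=\{(x,v)\in\mathcal A_s:\mathcal V(x,v)\leqslant L\}$, which are finite. I would build it in two steps: \emph{(i)} Proposition~\ref{Prop-ZZZd-Irreductible} gives $Q$-irreducibility on $\mathcal A_s\cup\mathcal A_{-s}$, and the parity of the signature forces any path from $(x,v)\in\mathcal A_s$ to $(x_*,v(x_*,s))\in\mathcal A_s$ to have even length, so for every $(x,v)\in K_L$ there is some $k_{(x,v)}\in\N_*$ with $Q^{2k_{(x,v)}}\bigl((x,v),\{(x_*,v(x_*,s))\}\bigr)>0$; \emph{(ii)} because $x_*$ is a strict local minimum of $U$, each one-dimensional Metropolis substep starting from $x_*$ rejects with positive probability, so the length-$2$ loop $(x_*,w)\to(x_*,-w)\to(x_*,w)$ has positive probability for every $w$, giving $Q^2\bigl((x_*,w),(x_*,w)\bigr)>0$. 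Combining \emph{(i)} and \emph{(ii)} on the finite set $K_L$ (one pads with the self-loops at $(x_*,v(x_*,s))$ to synchronize path lengths) yields a common $n_0\in\N_*$ and $\alpha>0$ such that $Q^{2n_0}\bigl((x,v),\cdot\bigr)\geqslant\alpha\,\delta_{(x_*,v(x_*,s))}$ uniformly over $K_L$.

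With the geometric drift and the small-set minorization in hand, the quantitative Harris theorem produces $C$ and $\rho\in(0,1)$ with $\|\delta_{(x,v)}Q^{2n}-\mu_s\|_{\mathcal W}\leqslant C\rho^n\mathcal W(x,v)$ on each $\mathcal A_s$, and taking the worst constants over the $2^d$ classes concludes. I expect the main obstacle to be step \emph{(ii)}: without a strict local minimum, $Q^2$ on $\mathcal A_s$ could itself be periodic (for instance the example $U(k)=|\lfloor k/2\rfloor|$ noted in Section~\ref{Section-Dim1} already gives period $4$ for $Q$, hence a residual period $2$ for $Q^2$), and the small-set condition would fail. The local-minimum assumption is precisely what breaks this residual periodicity and enables the padding argument that makes the minorization uniform on $K_L$.
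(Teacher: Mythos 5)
Your proposal is correct and follows essentially the same route as the paper: the Lyapunov bound of Proposition~\ref{Prop-ZZZd-Lyapunov} with $a=h/2$ and $b$ large, irreducibility of $Q^2$ on $\mathcal A_s$ from Proposition~\ref{Prop-ZZZd-Irreductible} plus the parity argument, aperiodicity from the length-$2$ loop at the strict local minimum, a Doeblin minorization on the finite sublevel set with the Dirac at $(x_*,v(x_*,\sigma(x,v)))$ as minorizing measure (your padding with self-loops is the same synchronization mechanism the paper uses via eventual positivity of $Q^{2r}$), and then the Hairer--Mattingly theorem together with the equivalence of $\mathcal V$ and $\mathcal W$. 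The only step you leave implicit is that the minorization is for $Q^{2n_0}$ rather than $Q^{2}$, so the Harris theorem directly yields decay only along multiples of $2n_0$, and one interpolates to all even times using that $Q$ is bounded on the $\mathcal V$-weighted space (a consequence of the drift alone, spelled out in \eqref{Eq-ContractionQZZd}), which only costs a constant.
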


\begin{proof}
Let $(x,v),(x',v')\in\mathcal A_s$ for some $s\in\{-1,1\}^d$. Proposition \ref{Prop-ZZZd-Irreductible} gives a path from $(x,v)$ to $(x',v')$ whose transitions are non-negative under $Q$. Since $Q(\mathcal A_s)\subset Q(\mathcal A_{-s})$, the length of such a path is necessarily even, from which $Q^2$ is irreducible on $\mathcal A_s$. The path $(x_*,v(x_*,s))\rightarrow(x_*,-v(x_*,s))\rightarrow(x_*,v(x_*,s))$ having a non-negative probability under $Q$, $Q^2$ is aperiodic on $\mathcal A_s$. As a consequence, for all $(x,v),(x',v')\in\mathcal A_s$, there exist $n_0$ such that $Q^{2n}\po (x,v),(x',v')\pf >0$ for all $n\geqslant n_0$. From Proposition \ref{Prop-ZZZd-invariance}, $\mu_s$ is invariant for $Q^2$.

Consider $\mathcal V$ as defined in Section \ref{SubSection-ZZZd-Lyapunov} with $a=h/2$ and $b$ large enough, so that
 \begin{eqnarray}\label{Eq-DemoZZdErgoLyap}
 Q^{2n}\mathcal V(x,v) & \leqslant & \gamma^{2n} \mathcal V(x,v) + \po 1 - \gamma^{2n}\pf C 
 \end{eqnarray}
for some $\gamma\in(0,1)$, $C>0$. The set $\{\mathcal V \leqslant 4C\}$ is finite. Fix any point $x'\in\Z^d$ (say, $x'=x_*$) and let
\[m\ :=\ \max_{\mathcal V (x,v) \leqslant 4C} \min\{k\in\N\ : \ Q^{2r}\po (x,v),(x',v(x',\sigma(x,v))\pf>0\ \forall r\geqslant k\}\,.\]
Then 
 \begin{eqnarray}\label{Eq-DemoZZdErgoDoeblin}
\min\{Q^{2m}\po (x,v),(x',s)\pf & : &  (x,v)\in \mathcal A_s,\ \mathcal V (x,v) \leqslant 4C\} \ > \ 0\,.
 \end{eqnarray}
From \cite[Theorem 1.2]{HairerMattingly2008} applied to $Q^{2m}$, the Foster-Lyapunov condition \eqref{Eq-DemoZZdErgoLyap} (applied with $n=m$) and the Doeblin condition \eqref{Eq-DemoZZdErgoDoeblin} imply the existence of $\rho\in(0,1)$  and $C'>0$ such that for all $x\in\Z^d$, $v\in\{-1,1\}^d$ and  $k\in\N_*$,
 \[\| \delta_{(x,v)}Q^{2km} - \mu_{\sigma(x,v)}\|_{\mathcal V } \ \leqslant \ C' \rho^k \mathcal V (x,v)\,.\]
 In fact, even without the Doeblin condition, following the proof of \cite[Theorem 1.2]{HairerMattingly2008}, we also get that the Lyapunov condition given by Proposition \ref{Prop-ZZZd-Lyapunov}  alone implies the following: there exist $\tilde C>0$ such that, for all probability measures $\nu,\nu'\in\mathcal P_{\mathcal V}$,
 \begin{eqnarray}\label{Eq-ContractionQZZd}
 \| \nu Q - \nu' Q\|_{\mathcal V} &  \leqslant &  \tilde C \|\nu  - \mu_s\|_{\mathcal V }\,,
 \end{eqnarray}
 and in particular 
 \[\| \nu Q^2 - \mu_s\|_{\mathcal V } \ = \ \| \nu Q^2 - \mu_s Q^2\|_{\mathcal V }    \  \leqslant \   \tilde C^2 \|\nu  - \mu_s\|_{\mathcal V } \,. \]
 Then for all $n\in\N$, considering the Euclidian division $n=km+r$ we get that
  \[\| \delta_{(x,v)}Q^{2n} - \mu_{\sigma(x,v)}\|_{\mathcal V }  \ = \ \| \delta_{(x,v)}Q^{2r}Q^{2km} - \mu_{\sigma(x,v)}\|_{\mathcal V }  \ \leqslant \ \tilde C^{2m}C' \rho^{n/m-1} \mathcal V (x,v)\,.\]
  The equivalence betwenn $\mathcal V$ and $\mathcal W$, hence between $\|\cdot\|_{\mathcal V}$ and $\|\cdot\|_{\mathcal W}$, concludes.
\end{proof}

\subsubsection{Asymptotic theorems}\label{SubSectionTCLZZZd}

\begin{thm}
 Suppose that $U$ admits a strict local minimum and that there exist $R,h>0$ such that \eqref{Eq-ZZZd-Cond-U} holds for all $i\in\cco 1,d\ccf$, $x\in\Z^d$ and $v\in\{-1,1\}^d$. Let $f:\Z^d \rightarrow \R$ be such that $\|f/\mathcal W\|_\infty<\infty$, where $\mathcal W$ is defined in Theorem \ref{TheoremZZZdErgodic} (here and below we identify $f$ with the function $(x,v)\in \Z^d\times\{-1,1\}^d\mapsto f(x)$). Consider the Zig-Zag walk on $\Z^d\times\{-1,1\}^d$ associated to $U$ with some initial condition $(x,v)$. Then, almost surely,
 \[\frac1n\sum_{k=0}^n f(X_k)  \ \underset{n\rightarrow \infty}\longrightarrow \ \pi(f)\,.\]
 If, moreover, $\|f/\sqrt{\mathcal W}\|_\infty<\infty$, then
\[\po \frac1{\sqrt n}\po \sum_{k=0}^{\lfloor n t\rfloor} f(X_k) - \pi(f)\pf \pf_{t\geqslant 0}\  \overset{law} {\underset{n\rightarrow +\infty}\longrightarrow}\ (\sigma_f B_t)_{t\geqslant0}\]
for some $\sigma_f\geqslant 0$,  where $(B_t)_{t\geqslant0}$ is a one-dimensional Brownian motion.
\end{thm}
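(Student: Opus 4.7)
The plan is to apply the standard toolbox for limit theorems of geometrically ergodic Markov chains, building directly on Theorem~\ref{TheoremZZZdErgodic}. The crucial structural remark is that, since $f$ depends only on $x$, one has $\mu_s(f) = \sum_{x\in\Z^d} \pi(x) f(x) = \pi(f)$ for every $s\in\{-1,1\}^d$, so that the limit is the same regardless of which class the chain sits in.

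For the LLN, split $\sum_{k=0}^{n-1}f(X_k)$ into its even- and odd-indexed parts. Each part is a Birkhoff sum for a Markov chain with transition $Q^2$ which, by Proposition~\ref{Prop-ZZZd-Irreductible} and the aperiodicity argument opening the proof of Theorem~\ref{TheoremZZZdErgodic}, is irreducible and aperiodic on its class $\mathcal A_{\pm\sigma(X_0,V_0)}$ with invariant probability $\mu_{\pm\sigma(X_0,V_0)}$. The bound $\|f/\mathcal W\|_\infty<\infty$, together with the finiteness of $\pi(e^{h|x_i|/2})$ (which follows from \eqref{Eq-ZZZd-Cond-U} by a one-dimensional telescoping), places $f$ in $L^1(\mu_s)$, and Birkhoff's ergodic theorem applied to each subsequence gives convergence to $\pi(f)$; summing the two yields the LLN.

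For the CLT the plan is a Poisson-equation/martingale decomposition. Although $Q$ has period $2$, the centered observable $\bar f := f - \pi(f)$ satisfies $\mu_s(\bar f) = 0$ for every signature, so $g := \sum_{n\geqslant 0} Q^n \bar f$ is formally a solution of $(I-Q)g = \bar f$. To make this rigorous with $g^2$ integrable against $\mu$, first rerun Proposition~\ref{Prop-ZZZd-Lyapunov} with $a=h/4$ to obtain a second Lyapunov function $\mathcal W'(x,v) = \sum_i e^{h|x_i|/4}$, then adapt Theorem~\ref{TheoremZZZdErgodic} to get geometric ergodicity of $Q^{2n}$ in $\|\cdot\|_{\mathcal W'}$. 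Since $\sqrt{\mathcal W}\leqslant \mathcal W'$, the assumption $\|f/\sqrt{\mathcal W}\|_\infty < \infty$ gives $|\bar f|\leqslant C\mathcal W'$; the geometric decay of $Q^{2n}\bar f$ extended to odd exponents by one application of $Q$ (using the Lyapunov bound $Q\mathcal W' \leqslant \gamma'\mathcal W' + K$) then shows the series defining $g$ converges absolutely, with $|g| \leqslant C'\mathcal W'$ and therefore $g^2 \leqslant C''\mathcal W$.

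With $g$ in hand, the identity
\[\sum_{k=0}^{n-1} \bar f(X_k) \ =\ M_n + g(X_0,V_0) - g(X_n,V_n), \qquad M_n \ := \ \sum_{k=0}^{n-1} \po g(X_{k+1},V_{k+1}) - Qg(X_k,V_k)\pf,\]
reduces the problem to a functional martingale CLT. The predictable bracket is $\langle M\rangle_n = \sum_{k=0}^{n-1} H(X_k,V_k)$ with $H := Qg^2 - (Qg)^2\leqslant C''\mathcal W$, and the first step applied to $H$ gives $\langle M\rangle_n/n \to \sigma_f^2 := \mu(H)$ almost surely. The boundary term $g(X_n,V_n)/\sqrt{n}$ is negligible in probability thanks to the uniform-in-$n$ bound on $\mathbb E\mathcal W(X_n,V_n)$ obtained by iterating Proposition~\ref{Prop-ZZZd-Lyapunov}, and a Lindeberg condition follows from a uniform $L^4$ bound on the martingale increments (proven by re-running Proposition~\ref{Prop-ZZZd-Lyapunov} with a slightly larger $a$). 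The standard functional martingale CLT then yields $n^{-1/2}M_{\lfloor nt\rfloor} \Rightarrow \sigma_f B_t$, which combined with the boundary estimate is the desired conclusion. The main obstacle I anticipate is the careful control of the Poisson solution despite the periodicity of $Q$ and the need to downgrade the Lyapunov weight from $\mathcal W$ to $\mathcal W'$ in order to square-integrate $g$; everything else is routine application of the Meyn--Tweedie--Hairer--Mattingly machinery.
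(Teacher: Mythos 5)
Your strategy is essentially the paper's own: the LLN by splitting the sum over even and odd indices and using the $\mathcal W$-ergodicity of $Q^2$ on the class $\mathcal A_{\sigma(x,v)}\cup\mathcal A_{-\sigma(x,v)}$ from Theorem \ref{TheoremZZZdErgodic}, and the CLT by constructing the Poisson solution $g=\sum_{n}Q^n\bar f$ under a Lyapunov weight of square-root size so that $g^2\lesssim \mathcal W\in L^1(\mu)$. The two execution differences are: where you re-run Proposition \ref{Prop-ZZZd-Lyapunov} with $a=h/4$ to produce $\mathcal W'$, the paper gets the same effect in one line by Jensen ($Q\sqrt{\mathcal V}\leqslant\sqrt{Q\mathcal V}$), and where you finish by hand with the decomposition $\sum_k\bar f(X_k)=M_n+g(X_0,V_0)-g(X_n,V_n)$ and a functional martingale CLT, the paper stops at the Poisson equation and cites Maigret's functional CLT for Harris chains with an $L^2$ Poisson solution. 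Your route is self-contained (it essentially re-proves the cited result) but otherwise identical in substance; the centering step using $\mu_s(\bar f)=0$ for every signature, and the passage from even to odd powers of $Q$ via one application of the Lyapunov bound, match the paper's argument.

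Two points in your finishing argument need repair, both minor. First, the claimed uniform $L^4$ bound on the martingale increments is not attainable by this machinery: $|g|\lesssim\sum_i e^{h|x_i|/4}$ gives $g^4\lesssim\sum_i e^{h|x_i|}$, and Proposition \ref{Prop-ZZZd-Lyapunov} yields a contraction only for exponents $a<h$ (at $a=h$ the factor $e^{-h+a}$ equals $1$), so no "slightly larger $a$" reaches the fourth moment. This is harmless because Lindeberg only needs moments of order $2+\delta$, available for any $\delta<2$; the same $2+\delta$ bound (or, more simply, the almost sure fact that $\max_{k\leqslant n}g^2(X_k,V_k)/n\rightarrow0$, which follows from your own LLN applied to $g^2\leqslant C\mathcal W$) is also what you need to make $\sup_{t\leqslant T}|g(X_{\lfloor nt\rfloor},V_{\lfloor nt\rfloor})|/\sqrt n$ negligible: the uniform-in-$n$ bound on $\mathbb E\,\mathcal W(X_n,V_n)$ controls the remainder at each fixed $t$ but not its running supremum, which is required for the functional statement. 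Second, since the chain never leaves $\mathcal A_{\sigma(x,v)}\cup\mathcal A_{-\sigma(x,v)}$, the limit of $\langle M\rangle_n/n$ is $\tfrac12\po\mu_{\sigma(x,v)}+\mu_{-\sigma(x,v)}\pf(H)$ with your $H=Qg^2-(Qg)^2$, not $\mu(H)$, which averages over all $2^d$ signature classes; this does not affect the theorem, which only asserts existence of some $\sigma_f\geqslant0$ for the given initial condition, but the identification $\sigma_f^2=\mu(H)$ should be dropped or corrected.
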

\begin{proof}
For the first part of the Theorem, simply decompose 
\[\frac1n\sum_{k=0}^n f(X_k)  \ = \ \frac1n\sum_{k=1}^{\lfloor n/2\rfloor} f(X_{2k})  +  \frac1n\sum_{k=0}^{\lfloor (n-1)/2\rfloor}  f(X_{2k+1}) \]
From Proposition \ref{TheoremZZZdErgodic} and the law of large numbers for $\mathcal W$-regular ergodic Markov chains, these terms almost surely converge respectively to $\mu_{\sigma(x,v)}(f)/2$ and $\mu_{-\sigma(x,v)}(f)/2$ (since $(X_1,V_1)\in\mathcal A_{-\sigma(x,v)}$ almost surely), which are both equal to $\pi(f)/2$.

For the second part, note that by the Jensen inequality and Proposition \ref{Prop-ZZZd-Lyapunov},
\[Q\sqrt{\mathcal V} \ \leqslant \ \sqrt{Q\mathcal V} \ \leqslant \ \sqrt{\gamma \mathcal V + C} \leqslant \sqrt\gamma \sqrt \mathcal V + \sqrt C\]
for some $\gamma\in(0,1)$ and $C>0$. Hence, $\sqrt{\mathcal V}$ is still a Lyapunov function for $Q$ and the results established with $\mathcal V$ and $\mathcal W$ in the previous section also hold with $\sqrt \mathcal V$ and $\sqrt{\mathcal W}$. Let $f:\Z^d\times\{-1,1\}^d\rightarrow \R$ with $\|f/\sqrt{\mathcal V}\|_\infty <\infty$. From Theorem \ref{TheoremZZZdErgodic} (applied with $\sqrt{\mathcal W}$), for all $(x,v)\in\Z^d\times\{-1,1\}^d$ and all $n\in \N$,
\[|Q^{2n} f(x,v) - \mu_{\sigma(x,v)}(f)| \ \leqslant \ C\rho^n \|f/\sqrt{\mathcal W}\|_\infty \sqrt{\mathcal W(x)}\]
and, using \eqref{Eq-ContractionQZZd},
\[|Q^{2n+1} f(x,v) - \mu_{-\sigma(x,v)}(f)| = |Q^{2n} Qf(x,v) - \mu_{\sigma(x,v)}Q(f)| \ \leqslant \ \tilde C C\rho^n \|f/\sqrt{\mathcal W}\|_\infty \sqrt{\mathcal W(x)}\,.\]
Here we used that $\mu_{s}Q = \mu_{-s}$, which can be obtained from
\[\mu_{s}Q = \mu_{s} Q^{2n} Q = \mu_{s}Q Q^{2n} \underset{n\rightarrow\infty}\longrightarrow \mu_{-s}\,,\]
where the limit holds in $\mathcal P_{\mathcal W}$ thanks to Proposition \ref{TheoremZZZdErgodic} together with the fact that the support of $\mu_{s}Q $ is included in $\mathcal A_{-s}$. We have thus obtained that 
\[g(x,v) \ := \ \sum_{n\in\N} \po Q^n f(x,v) - \mu_{(-1)^n\sigma(x,v)}(f)\pf \]
is well-defined and satisfies $\|g/\sqrt{\mathcal W}\|_\infty  \leqslant C \| f/\sqrt{\mathcal W}\|_\infty $ for some $C>0$ independent from $f$. Now suppose that in fact $f$ is a function of space alone, i.e. $f(x,v)=f(x)$. In that case $\mu_s(f) = \mu(f)$ for all $s\in\{-1,1\}^d$ and thus
\[Qg(x,v) \ = \ \sum_{n\in\N} \po Q^{n+1} f(x,v) - \mu(f)\pf \ = \ g(v,x) - f(x) + \mu(f)\,,\]
in other words $g$ is the Poisson solution associated to $Q$ and $f$. Since $\sqrt{\mathcal W}\in L^2(\mu)$, so does  $g$, and \cite[Theorem 3.1]{Maigret} 
 concludes.
 \end{proof}

\section{Numerical scheme for hybrid kinetic samplers}\label{Section-Schema-Euler}

\subsection{The continuous-time processes}\label{Section-COntinuHybride}

In this section we consider a class of kinetic processes for MCMC that can have a jump, drift and/or diffusion
component at the same time in their dynamics.
 They are defiend as follows.

Let $U\in \mathcal C^\infty(\T^d)$ (where $\T=\R/\Z$ is the $1$-periodic torus) and denote by $\mu$ the Gibbs measure on $E=\T^d\times\R^d$ associated to the Hamiltonian $H(x,v) = U(x) + |v|^2/2$, namely the probability law on $E$ with density proportional to $\exp(-H)$. Suppose that $\na U(x) = \sum_{i=0}^N F_i(x)$ where $N\in\N$ and $F_i\in\mathcal C^\infty(\T^d, \R^d)$ for  all $i\in\cco 0,N\ccf$. Consider the operator $L$ defined for all $f\in\mathcal C^2(E)$ by
\begin{eqnarray}\label{Eq-Mixed-kinetic-L}
L &=&  A_1+A_2 + \sum_{i=1}^N A_{3,i} + \gamma A_4 + \lambda A_5 \,,
\end{eqnarray}
where
\begin{eqnarray*}
A_1 f(x,v) & = &  v\cdot \na_x f(x,v)  \\
A_2 f(x,v) & = & -  F_0(x)   \cdot \na_v f(x,v)\\
A_{3,i} f(x,v) & = & \po v\cdot F_i(x)\pf_+ \po f\po x,R_i(x,v)\pf - f(x,v)\pf\qquad \forall i\in\cco 1,N\ccf \\
A_4 f(x,v) & = & v \cdot \na_v f(x,v) + \Delta_v f(x,v)\\
A_5 f(x,v) & = & \int_{\R^d} \po f(x,w) -f(x,v)\pf \nu_d(\dd w)
\end{eqnarray*}
and where $\gamma,\lambda\geqslant 0$, $\nu_d$ is the standard $d$-dimensional Gaussian distribution and
\[R_i(x,v) \ = \ v - 2\po \frac{F_i(x)\cdot v}{|F_i(x)|^2}F_i(x)\pf \1_{F_i(x)\neq 0}\]
is the orthogonal reflection of $v$ with respect to $F_i(x)$. We call respectively $A_1$ the transport operator, $A_2$ the drift one, $A_{3,i}$ the $i^{th}$ bounce one, $A_4$ the Ornstein-Uhlenbeck (or friction/dissipation) one and $A_5$ the refreshment one.

As particular cases, many usual kinetic processes used in MCMC algorithms can be recovered:

\begin{itemize}
\item $F_0=\na_x U$ and $\lambda=\gamma=0$ corresponds to the Hamiltonian dynamics.
\item $F_0=\na_x U$, $\gamma>0$ and $\lambda = 0$  to the Langevin diffusion.
\item $F_0=\na_x U$, $\gamma=0$ and $\lambda >0 $ to the Hybrid Monte Carlo (HMC) algorithm.
\item $F_0=0$, $N=1$, $F_1=\na_x U$, $\gamma = 0$ and $\lambda>0$  to the Bouncy Particle sampler.
\item $F_0=0$, $N=d$, $F_i=\na_{x_i}U e_i$ (recall $e_i$ denotes the $i^{th}$ vector of the canonical basis of $\R^d$), $\gamma=\lambda= 0$  to the Zig-Zag process.
\end{itemize}

We could also consider other kinds of jump mechanisms, like the randomized bounces of \cite{RobertWu,Michelforward}, or different kinds of relaxation operators in the velocity operator rather than $A_4$ and $A_5$, for instance refreshment of velocities coordinate by coordinate, or partial refreshments for which, at exponential random times with parameter $\lambda$, the velocities $v$ jumps to $(1-\alpha) v + \sqrt{\alpha} G$ where $G\sim\nu_d$ (varying $\alpha\in(0,1]$ and $\lambda$ interpolates between $A_5$ and $A_4$, the latter being the limit $\alpha\rightarrow 0$ and $\lambda\rightarrow +\infty$ with $\lambda \alpha = 1$). However, this would just make the notations heavier and the presentation more confused, without adding any particularly new idea with respect to the discussion to come, and thus we stick to \eqref{Eq-Mixed-kinetic-L}.

For MCMC purposes, the law of the process associated to a generator $L$ of the form above should converge in large times toward the target measure $\mu$. This is established in the next two results. However, this is not the main motivation of this section, which is the study of  the numerical sampling of such a process, and thus we only give sketches of proof and references for these results.

\begin{prop}
The set of compactly supported smooth functions $\mathcal C^\infty_c(E)$ is a core for $L$ and $\mu$ is invariant for $L$.
\end{prop}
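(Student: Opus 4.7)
The plan is to decouple the two claims: invariance is a direct computation exploiting the decomposition $\nabla U = \sum_{i=0}^N F_i$, while the core property reduces to a known approximation result for PDMPs that is already invoked elsewhere in the paper.

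For invariance, I would verify $\int L f\, d\mu = 0$ for $f \in \mathcal C^\infty_c(E)$ by handling each of the five building blocks separately and then summing. Integration by parts on $\T^d$ (no boundary term) gives $\int A_1 f\, d\mu = \int f\, (v \cdot \nabla U)\, d\mu$, while integration by parts in $v$ against the Gaussian density (no boundary term thanks to the compact support of $f$ in $v$) gives $\int A_2 f\, d\mu = -\int f\, (v \cdot F_0)\, d\mu$. For each bounce $A_{3,i}$ the key fact is that $v \mapsto R_i(x,v)$ is an involution preserving $|v|$ and hence $\mu$, combined with the identity $R_i(x,v) \cdot F_i(x) = -v \cdot F_i(x)$; changing variables in the term $(v \cdot F_i)_+ f(x, R_i v)$ converts the positive part into the negative part, yielding $\int A_{3,i} f\, d\mu = -\int f\, (v \cdot F_i)\, d\mu$. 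For $A_4$, the standard integration by parts against the Gaussian $v$-marginal gives $\int A_4 f\, d\mu = 0$, and for $A_5$, since $\nu_d$ is exactly the $v$-marginal of $\mu$, Fubini yields $\int A_5 f\, d\mu = 0$. Summing these contributions, the terms $v \cdot F_0, v \cdot F_1, \ldots, v \cdot F_N$ assemble into $v \cdot \nabla U$ by hypothesis, and everything cancels.

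For the core property, I would follow the strategy already used in the proof of Theorem~\ref{Thm-Dimd-scaling}. The idea is to approximate $L$ by a sequence of smooth, bounded, compactly supported generators $L_n$ obtained by replacing the non-smooth bounce rates $(v \cdot F_i)_+$ by smooth approximations $r_{i,n} \in \mathcal C^\infty_c$, and by truncating the unbounded velocity coefficients appearing in $A_1$ and $A_4$ by suitable cut-offs. Each $L_n$ is then the generator of a well-behaved PDMP for which $\mathcal C^\infty_c(E)$ is directly a core, and the scheme $L_n \to L$ falls under \cite[Definition 20]{DurmusGuillinMonmarcheToolbox}. Invoking \cite[Theorem 21]{DurmusGuillinMonmarcheToolbox} then transfers the core property from the approximations to $L$ itself.

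The main obstacle is really only the core claim: the bounce rates are only Lipschitz and both the transport and the Ornstein-Uhlenbeck coefficients are unbounded in $v$, so one must simultaneously smooth the rates, truncate in velocity, and verify semigroup convergence in the topology required by the cited theorem. The invariance computation itself is entirely routine once the change of variables $v \mapsto R_i(x,v)$ is set up correctly and once the decomposition $\nabla U = \sum_{i=0}^N F_i$ is used to close the telescoping of $v \cdot F_i$ terms.
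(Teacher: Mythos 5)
Your proposal is correct and follows essentially the same route as the paper: the core property via the smooth/compact approximation machinery of \cite[Theorem 21 and Proposition 23]{DurmusGuillinMonmarcheToolbox} (exactly the reference the paper invokes, and the same regularize-the-bounce-rates/truncate-the-refreshment strategy the paper itself carries out in the proof of Proposition \ref{Prop:hybrid}), and invariance via the integration-by-parts verification that $\mu(Lf)=0$ on compactly supported test functions, which is all the paper's one-line argument contains. One correction to your framing, though: the two claims are not decoupled. The computation $\mu(Lf)=0$ for $f\in\mathcal C^\infty_c(E)$ is only \emph{infinitesimal} invariance on that set; to conclude $\mu P_t=\mu$ one extends $\mu(Lf)=0$ to the whole domain of $L$ and differentiates $t\mapsto \mu(P_tf)$, and this extension is exactly where the core property is used --- which is why the paper proves the core statement first and then says invariance ``follows from that''. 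Since you establish the core property anyway, your overall argument closes, but the logical order must be core first, then invariance. (A side remark: your claim $\mu(A_4f)=0$ is correct for the intended Ornstein--Uhlenbeck operator $-v\cdot\nabla_v+\Delta_v$; the sign in the paper's displayed definition of $A_4$ is a typo, as the later Lyapunov computation $A_4\mathcal W=-2|v|^2+2d$ confirms.)
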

\begin{proof}
The proof that  $\mathcal C^\infty_c(E)$ is a core for $L$ is similar to the case of the Bouncy Particle Sampler in \cite[Theorem 21 and Proposition 23]{DurmusGuillinMonmarcheToolbox}. From that, the invariance of $\mu$ straightforwardly follows from the fact that $\mu( L f) =0$ for all compactly supported $\mathcal C^2$ functions, which is easily checked through integration by part (see also \cite[Section 1.4]{MonmarcheRTP}).
\end{proof}

Similarly to Section \ref{Section:longtimeZZd}, we denote $\mathcal W(x,v) = 1+|v|^2$, $\mathcal P_{\mathcal W} = \{\nu\in\mathcal P(E),\ \nu(\mathcal W) <\infty\}$, $\| \nu_1-\nu_2\|_{\mathcal W} = \sup_{|f|\leqslant \mathcal W}|\nu_1 f - \nu_2 f|$.  

\begin{prop}\label{Prop:HybridCVtempslong}
Suppose that either $\gamma> 0$ or $\lambda> 0$. Then there exist $\kappa,C>0$ such that the semi-group $(P_t)_{t\geqslant 0}$ associated to $L$ satisfies, for all $t\geqslant 0$ and $\nu \in \mathcal P_{\mathcal W}$,
\[\|\nu P_t - \mu \|_{\mathcal W}  \ \leqslant \ C e^{-\kappa t} \|\nu-\mu\|_{\mathcal W}\]
\end{prop}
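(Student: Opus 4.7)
The plan is to apply a weighted Harris theorem (as in \cite{HairerMattingly2008} or more directly a continuous-time version thereof, such as Down--Meyn--Tweedie), which requires two ingredients: a Foster--Lyapunov drift condition with respect to $\mathcal W$ and a minorisation/Doeblin condition on the level sets of $\mathcal W$. Since $\mathbb T^d$ is compact and $\mathcal W$ controls only the velocity, the level sets $\{\mathcal W \le K\}$ are compact subsets of $E$, and exponential contraction in $\|\cdot\|_{\mathcal W}$ then follows by a standard argument.

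For the drift condition, the natural candidate is $\mathcal W(x,v)=1+|v|^2$, and I would compute $L\mathcal W$ summand by summand. The transport operator $A_1$ and each bounce operator $A_{3,i}$ give zero contribution: $A_1\mathcal W=0$ because $\mathcal W$ does not depend on $x$, and $A_{3,i}\mathcal W=0$ because the orthogonal reflection $R_i$ preserves $|v|^2$. The drift $A_2\mathcal W = -2 F_0(x)\cdot v$ is bounded by $\varepsilon |v|^2 + \|F_0\|_\infty^2/\varepsilon$ using Young's inequality and the boundedness of $F_0$ on the torus. Finally, the Ornstein--Uhlenbeck part contributes (correctly interpreted as the Langevin dissipation) $\gamma A_4 \mathcal W = -2\gamma|v|^2 + 2\gamma d$, and the refreshment $\lambda A_5\mathcal W = \lambda(d - |v|^2)$. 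As soon as $\gamma>0$ or $\lambda>0$, choosing $\varepsilon$ small enough yields $L\mathcal W \le -\alpha \mathcal W + \beta$ for explicit constants $\alpha>0$, $\beta\ge 0$. A routine Gronwall-type argument on $e^{\alpha t} P_t \mathcal W$ then gives $P_t \mathcal W \le e^{-\alpha t}\mathcal W + \beta/\alpha$ and shows that $\mathcal P_{\mathcal W}$ is preserved by the semi-group.

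The main obstacle is the minorisation on compact sets, for which the two cases $\lambda>0$ and $\gamma>0$ require different (but standard) arguments. When $\lambda>0$, the simplest route is to condition on the event that a single refreshment jump occurs in an interval of fixed length $t^*>0$ while no bounce jump occurs. Between the start and the refreshment time, the flow is deterministic and regular in initial conditions, so conditionally on that event the velocity after refreshment is a Gaussian, independent of the starting point; integrating the transport of this Gaussian over the jump time yields a lower bound of the form $P_{t^*}((x,v),\cdot) \ge \eta\, \nu(\cdot)$ on any compact set, where $\nu$ is a fixed probability measure with smooth density. When $\gamma>0$ but $\lambda=0$ the argument is hypoelliptic: the vector fields generating $A_1+A_2+\gamma A_4$ satisfy Hörmander's bracket condition because the noise acts on $v$ and the Poisson bracket with $v\cdot\nabla_x$ generates $\nabla_x$, so the smooth part of the dynamics admits a smooth transition density; on the event that no bounce occurs in $[0,t^*]$ (which has strictly positive probability uniformly on compact sets, since the rates $(v\cdot F_i(x))_+$ are locally bounded) this density is inherited by $P_{t^*}$, and controllability of the Langevin SDE on compact sets provides the required minorisation.

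Combining the drift condition and the minorisation on $\{\mathcal W\le K\}$ for $K$ large enough that $\beta/\alpha < K$, the continuous-time Harris theorem (see e.g. the proof scheme in \cite[Theorem 1.2]{HairerMattingly2008} adapted to the semigroup, or directly \cite{DownMeynTweedie}) gives the existence of $C>0$ and $\kappa>0$ such that $\|\nu P_t - \mu\|_{\mathcal W} \le C e^{-\kappa t}\|\nu - \mu\|_{\mathcal W}$ for all $\nu\in\mathcal P_{\mathcal W}$, which is the claimed statement. The core of the technical work is really the minorisation in the purely diffusive case, while the Lyapunov step and the application of Harris' theorem are essentially mechanical given the explicit form of $L$.
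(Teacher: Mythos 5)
Your overall strategy is the same as the paper's: a Foster--Lyapunov condition for $\mathcal W(x,v)=1+|v|^2$, a Doeblin/minorisation bound obtained by conditioning on the absence of bounces on a fixed time window (the bounce rates being bounded along trajectories that stay in a compact set, since $\T^d$ is compact and the $F_i$ are bounded), and then the Harris-type theorem of \cite{HairerMattingly2008}. Your Lyapunov computation coincides with the paper's ($A_1\mathcal W=A_{3,i}\mathcal W=0$ because the reflections preserve $|v|$, $A_2\mathcal W\leqslant 2\|F_0\|_\infty|v|$, $\gamma A_4\mathcal W=-2\gamma|v|^2+2\gamma d$, $\lambda A_5\mathcal W=\lambda(d-|v|^2)$), and your treatment of the case $\gamma>0$ (H\"ormander bracket condition for the kinetic Langevin part, positivity by controllability, uniformity on compact level sets of $\mathcal W$) is at least as detailed as the paper's sketch, which only invokes ``controllability arguments''.

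There is, however, a genuine gap in your minorisation for the case $\lambda>0$, $\gamma=0$. Conditionally on exactly one refreshment at time $s\in(0,t^*)$ with refreshed velocity $w$, and no other jump, the state at time $t^*$ is a deterministic function $\Phi_{x,v}(s,w)$ of $(s,w)\in(0,t^*)\times\R^d$: the final velocity is determined by $(s,w)$ (it is $w$ transported by the deterministic flow over $[s,t^*]$), so the conditional law of $(X_{t^*},V_{t^*})$ is carried by a set of dimension at most $d+1$, which is strictly less than $2d$ as soon as $d\geqslant 2$; moreover this set depends on the starting point $(x,v)$. Hence no bound of the form $P_{t^*}((x,v),\cdot)\geqslant \eta\,\nu(\cdot)$ with a fixed $\nu$ having a (full-dimensional) density can follow from the single-refreshment event, except in dimension $1$. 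The standard fix, and what the paper's ``controllability'' really hides, is to condition on (at least) two refreshments and no bounce: the first refreshed velocity $w_1$, together with the inter-jump time, sweeps an open set of positions (the displacement is essentially $(s_2-s_1)w_1$ up to smooth corrections from $F_0$), while the second refreshed velocity $w_2$ fixes the final velocity; the map $(s_1,w_1,s_2,w_2)\mapsto(X_{t^*},V_{t^*})$ is then a submersion for $t^*$ small, uniformly over a compact set of initial conditions, and a change of variables yields the required uniform lower bound by a measure with smooth density. With this modification (and keeping your no-bounce conditioning and the $\gamma>0$ hypoelliptic argument), the proof goes through as you describe.
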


\begin{proof}
Similarly to Theorem \ref{TheoremZZZdErgodic}, this is classically obtained by combining a Doeblin and a Foster-Lyapunov conditions \cite{HairerMattingly2008}. 

When $\gamma> 0$ or $\lambda> 0$, the Doeblin condition is obtained through controllability arguments (a notable fact is that under quite general conditions on $U$ the Zig-Zag process  is irreducible even if $\lambda = \gamma =0$, see  \cite{BierkensRobertsZitt}). Remark that the bounce operators play no role here since if, uniformly in $(x,v)$ in a compact of $E$, $\delta_{(x,v)} e^{t_0 (A_1+A_2 +  \gamma A_4 + \lambda A_5)}$ is bounded below at some time $t_0>0$ by $c_0$ times the uniform measure on some compact set with some $c_0>0$, then $\delta_{(x,v)}e^{t_0 L}$ is bounded below by $c_0 e^{-t_0\sum_{i=1}^N\| F_i\|_{\infty}}$ times this uniform measure, where we have used that the total bounce rate is bounded above by $\sum_{i=1}^N\| F_i\|_{\infty}$ so that there is a probability at least $e^{-t_0\sum_{i=1}^N\| F_i\|_{\infty}}$ that no bounce occurs on the time interval $[0,t_0]$.

The Lyapunov condition stems from the fact that for all $(x,v)\in E$,
\begin{eqnarray*}
A_1 \mathcal W(x,v) & = & 0\\
A_2 \mathcal W(x,v) & \leqslant & 2\|F_0\|_\infty   |v| \\
\qquad A_{3,i} \mathcal W(x,v) & = & 0\qquad \forall i\in\cco 1,N\ccf\\
A_4 \mathcal W(x,v) & = &  -2|v|^2 + 2d\\
A_5 \mathcal W(x,v) & = &  -|v|^2 + d \,,
\end{eqnarray*}
and thus $L\mathcal W \leqslant -\min(2\gamma,\lambda)/2 \mathcal W + C$ for some $C>0$.

\end{proof}

As announced, from now and in the rest of Section \ref{Section-Schema-Euler} we  focus on the question of the  the numerical sampling of the process with generator $L$.

When $F_0=0$, and $\gamma=0$, the process is a piecewise deterministic velocity jump process. Between two random jumps, the process simply follows the flow $(x,v)\mapsto (x+tv,v)$, so that the jump time $T_i$ associated to the vector field $F_i$ follows the law
\[\mathbb P\po T_i > t\pf = \exp\po - \int_0^t \po v\cdot F_i(x+sv)\pf_+\dd s\pf\,.\]
Provided that for all $x,v\in\R^d$ and $s>0$, $\po v\cdot F_i(x+sv)\pf_+\leqslant \varphi_{x,v}(s)$ for some function $\varphi_{x,v}$ such that $\int_0^t \varphi_{x,v}(s)\dd s$ can be computed, the continuous-time thinning algorithm  allows for an exact simulation of the jump times, hence of the process \cite{Thieullen2016}. The absence of discretization bias on the invariant measure is an argument in favour of these kinetic processes. This is no longer the case when $F_0\neq 0$, except in very particular cases (e.g. harmonic oscillators) where the ODE $\partial_t( x , v)=(v,-F_0(x))$ can be explicitly solved. In most cases, this ODE is solved numerically and thus the simulation for the stochastic process is not exact. In order to conserve the invariant measure and suppress the bias, a Metropolis step can be added \cite{BouRabee}, but this slows down the motion of the process, increases the variance and may thus be counter-productive, as observed when comparing Metropolis Adjusted Langevin algorithm and Unadjusted Langevin Algorithm \cite{DurmusMoulines}. 

So why mix a deterministic drift and jump mechanisms if this prohibits exact simulation? The motivation is given by the possible numerical gain given by thinning, as presented in Section \ref{Section-thinning}. Indeed, as said before, exact simulation requires a bound on the jump rate, and a poor (i.e. large) bound leads to many jump proposals per time unit and a very low efficiency. In particular, jump mechanisms are not adapted for fastly-varying potentials, like potentials used in molecular dynamics with a singularity at zero (Lennard-Jones, Coulomb\dots). On the other hand, jump mechanisms are very efficient with Lipschitz potentials. So, a mixed drift/jump part is interesting as soon as the potential exhibits different scales, like fast-varying but numerically cheap parts together with Lipschitz but numerically intensive parts. This is similar to the idea of multi-time-steps algorithms \cite{TuckermanRossiBerne,GibsonCarter} with somehow random adaptive time-steps, except that now we are simply going to discretize (with a unique time-step, no subtlety here) a continuous-time process which is ergodic with respect to the target law and thus there shouldn't be any resonance problem as exhibited by multi-time-steps algorithms. Besides, for the applications in molecular dynamics that have motivated this question \cite{MonmarcheBouncyChimie}, due to stability issues raised by very fast oscillations in some parts of the system, the time-step is anyway constrained to be very small, as compared to a high variance that comes from the problem of exploring a complex, high-dimensional, multi-scale, metastable landscape. So, exact simulation is not necessarily our objective.

\subsection{A Strang splitting scheme}\label{SubSectionStrang}

A Strang splitting scheme to compute the evolution given by a generator $L=L_1+L_2$ is based on the fact that, formally,
\[e^{t(L_1+L_2)} \  =\ e^{tL_1/2}e^{tL_2}e^{tL_1/2} + \underset{t\rightarrow 0}o(t^2)\,.\]
Hence, if we can simulate exactly a process with generator $L_1$ and $L_2$, or more generally if we have second-order approximations of those, we get a second-order scheme for $L_1+L_2$. Using twice this fact,
\[e^{t(L_1+L_2+L_3)} \  =\ e^{tL_1/2}e^{t(L_2+L_3)}e^{tL_1/2} + \underset{t\rightarrow 0}o(t^2) \  =\ e^{tL_1/2}e^{tL_2/2}e^{tL_3} e^{tL_2/2}e^{tL_1/2} + \underset{t\rightarrow 0}o(t^2) \,.\]
In particular, from the considerations developed in Section \ref{SubsectionInvariantMeasure}  (and Theorem \ref{Thm-Kallenberg}), the equilibrium of the corresponding Markov chain is close (in some senses) to the Gibbs measure $\mu$ at order $\delta^2$, where $\delta$ is the timestep.

For instance, for the Langevin diffusion, which corresponds to $L$ given by \eqref{Eq-Mixed-kinetic-L} with $\gamma >0$, $\lambda= 0$, $N=0$ and $F_0=\na_x U$, many splitting schemes can be considered, see e.g. the discussions in \cite{Leimkuhler,BouRabee}. A very precise study, both theoretical and empirical, of all the possible schemes for mixed jump/diffusion processes is beyond the scope of the present paper, and we will only consider one particular choice. Consider the splitting
\[L_1 = A_1\,,\qquad L_2 = A_2+\gamma A_4\,,\qquad L_3 = \sum_{i=1}^N A_{3,i} + \lambda A_5\,.  \]
Each of the three evolutions corresponding to $e^{tL_i}$, $i=1,2,3$ can be sampled exactly (remark that, in particular cases, the velocity jump process corresponding to $e^{t(L_1+L_3)}$ could also be sampled exactly). As a consequence, for a given time-step $\delta>0$, we consider the kinetic walk $(X_n,V_n)_{n\in\N}$ whose transition is  defined as follows:
\begin{enumerate}
\item Set $\widetilde X_n = X_n + \delta V_n/2$.
\item If $\gamma> 0$, set 
\[\widetilde  V_n \ =\  e^{-\gamma \delta/2 } V_n - \po 1 - e^{-\gamma \delta /2 } \pf F_0(\widetilde X_n) + \sqrt{\po 1 - e^{-\gamma \delta/2  } \pf  } G\]
 with $G$ a standard Gaussian random variable. If $\gamma=0$, set $\widetilde  V_n = V_n - \delta F_0(\widetilde X_n)/2  $.
\item Set $\widehat  V_{n} = W_\delta$ where $(Y_t,W_t)_{t\in[0,\delta]}$ is a Markov chain with generator $L_3$ and initial condition $(Y_0,W_0) = (\widetilde X_n,\widetilde  V_n)$ (so that $Y_t = \widetilde X_n$ for all $t\in[0,\delta]$). 
\item If $\gamma> 0$, set 
\[ V_{n+1} \ =\  e^{-\gamma \delta/2  } \widehat V_n - \po 1 - e^{-\gamma \delta/2  } \pf F_0(\widetilde X_n) + \sqrt{\po 1 - e^{-\gamma \delta/2  } \pf  } G'\]
 with $G'$ a standard Gaussian random variable. If $\gamma=0$, set $V_{n+1} = \widehat  V_n - \delta F_0(\widetilde X_n)/2  $.
\item Set $X_{n+1} = \tilde X_n + \delta V_{n+1}/2 = X_n + \delta (V_n+V_{n+1})/2$.
\end{enumerate}
(Of course, implicitly, the variables $G$, $G'$ and $(W_t)_{t\in[0,\delta]}$ are  independent one from the other and from the past trajectory). Then, denoting $Q_{\delta}$ the transition operator of $(X_n,V_n)$ defined by $Q_{\delta} f(x,v) = \mathbb E\po f(X_1,V_1)\ | \ (X_0,V_0)=(x,v)\pf$ for all measurable bounded function $f$, one has
\begin{eqnarray*}
Q_{\delta} & = &  e^{\delta L_1/2}e^{\delta L_2/2}e^{\delta L_3} e^{\delta L_2/2}e^{\delta L_1/2} \,.
\end{eqnarray*}
To make the link with the discussion of Section \ref{Section-general}, for all measurable bounded  $f$,
\[Q_\delta f(x,v) \ = \ \int f\po x+ \frac{v+w}{2},w\pf p_{\delta}(x,v;\dd w)\,, \] 
where $p_{\delta}$ is defined by the fact that, in the algorithm above, for all $n\in\N$ the law of $V_{n+1}$ is $p_{\delta}(X_n,V_n;\cdot)$.

\begin{prop}\label{Prop:hybrid}
Let $(Y_t,W_t)_{t\geqslant 0}$ be a Markov process associated to $L$ and, for all $\delta>0$, let $(X_n^\delta,V_n^\delta)_{n\geqslant 0}$ be a Markov chain with transition operator $Q_\delta$ and initial condition $(X_0^\delta,V_0^\delta)=(Y_0,W_0)$. Then
\[\po  X^\delta_{\lfloor t/\delta\rfloor},  V^\delta_{\lfloor t/\delta\rfloor}\pf_{t\geqslant 0} \ \overset{law}{\underset{\delta\rightarrow 0}\longrightarrow}\  (Y_t,W_t)_{t\geqslant 0}\,.\]
\end{prop}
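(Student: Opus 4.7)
The plan is to appeal to the general Trotter-Kato principle via \cite[Theorem 17.28]{Kallenberg}, the same result underlying Theorem \ref{Thm-Kallenberg}, after verifying the generator convergence $\|(Q_\delta - I)f/\delta - Lf\|_\infty \rightarrow 0$ for $f$ in a core of $L$. Theorem \ref{Thm-Kallenberg} itself does not directly apply here: condition \eqref{Eq-Scaling-vitesse} fails, since the refreshment $\lambda A_5$ and the Ornstein-Uhlenbeck dissipation $\gamma A_4$ both produce, in a single Strang step, an $L^2$ change of the velocity that is not uniformly bounded in $v$. However, this condition is used in the proof of Theorem \ref{Thm-Kallenberg} only to reconstruct the transport part of $L$ from a velocity-only convergence; in our setting there is no space or velocity rescaling ($\eta(\delta)=\kappa(\delta)=1$), and the generator $L_\delta := (Q_\delta - I)/\delta$ of the Poisson-time-changed chain $(X^\delta_{N^\delta_t}, V^\delta_{N^\delta_t})_{t\geqslant 0}$, with $N^\delta$ an independent rate-$1/\delta$ Poisson process, will be estimated directly against $L$.

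The heart of the proof is a Taylor expansion of each factor of the Strang composition $Q_\delta = e^{\delta L_1/2} e^{\delta L_2/2} e^{\delta L_3} e^{\delta L_2/2} e^{\delta L_1/2}$. Since each $e^{tL_i}$ is a Markov semigroup, hence a sup-norm contraction, the Duhamel formula applied to $g \in \mathcal D(L_i^n)$ yields
\[\left\| e^{tL_i} g - \sum_{k=0}^{n-1}\frac{t^k}{k!}L_i^k g \right\|_\infty \ \leqslant \ \frac{t^n}{n!}\|L_i^n g\|_\infty\,.\]
For $f \in \mathcal C^\infty_c(E)$, every iterated operator $L_{i_1}\cdots L_{i_k} f$ turns out to be bounded continuous: the transport $A_1$ and the drift/diffusion $A_2+\gamma A_4$ are local in $v$ and preserve $\mathcal C^\infty_c(E)$; the bounces $A_{3,i}$ preserve the ball $\{|v|\leqslant R\}$ containing the $v$-support of $f$, since $|R_i(x,v)|=|v|$; and the refreshment $A_5$ outputs $\bar f(x) - f(x,v)$, whose $v$-derivatives remain supported in the $v$-support of $f$. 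Multiplying the five first-order expansions and collecting terms of order $\delta$, with uniform $\underset{\delta\rightarrow 0}{\mathcal O}(\delta^2)$ remainders, gives $Q_\delta f = f + \delta L f + \underset{\delta\rightarrow 0}{\mathcal O}(\delta^2)$ in sup norm on $E$, as is characteristic of a second-order splitting.

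Since $\mathcal C^\infty_c(E)$ is a core for $L$ (established above), \cite[Theorem 17.28]{Kallenberg}, applied exactly as in the proof of Theorem \ref{Thm-Kallenberg}, yields Skorohod convergence of $(X^\delta_{N^\delta_t}, V^\delta_{N^\delta_t})_{t\geqslant 0}$ to $(Y_t, W_t)_{t\geqslant 0}$. The passage to the deterministic grid $t \mapsto \lfloor t/\delta \rfloor$ then follows, as in the closing paragraphs of the proof of Theorem \ref{Thm-Dim1-scaling}, via an increasing time change $\gamma_\delta$ sending the $\lfloor t/\delta\rfloor$-th jump time of $N^\delta$ to $\lfloor t/\delta\rfloor \delta$; the law of large numbers for Poisson processes ensures $\sup_{t\in[0,T]}|\gamma_\delta(t) - t|\rightarrow 0$ in probability, completing the Skorohod convergence. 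The main obstacle is the bookkeeping needed to check the boundedness of all iterated compositions $L_{i_1}\cdots L_{i_k} f$: because $e^{tL_2}$ and $e^{tL_3}$ do not preserve compact support in $v$, the sketch above must be made rigorous for each pair of operators appearing in the expansion, after which the remainder of the proof is routine.
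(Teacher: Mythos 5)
Your overall strategy is the same as the paper's: expand each Strang factor to first order in sup norm via Duhamel and contractivity, deduce $\|\delta^{-1}(Q_\delta f-f)-Lf\|_\infty\to 0$ on the core $\mathcal C^\infty_c(E)$, and invoke \cite[Theorem 17.28]{Kallenberg}. (The Poissonization plus time-change at the end is superfluous -- Kallenberg's theorem applies directly to the discrete chain observed at times $\lfloor t/\delta\rfloor$ -- but that is harmless.) The genuine gap is the central claim that ``every iterated operator $L_{i_1}\cdots L_{i_k}f$ turns out to be bounded continuous'' for $f\in\mathcal C^\infty_c(E)$, together with the implicit claim that bounded continuity suffices for uniform remainders. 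Both fail for the \emph{unregularized} operators. First, $A_{3,i}f(x,v)=(v\cdot F_i(x))_+\po f(x,R_i(x,v))-f(x,v)\pf$ is continuous but not differentiable: the positive part has a kink on $\{v\cdot F_i(x)=0\}$ and $R_i$ is discontinuous on $\{F_i(x)=0\}$, so compositions such as $A_4A_{3,i}f$ or $A_2A_{3,i}f$, which appear in your second-order remainders (and in the requirement that $L_3$-outputs lie in the strong domain of $e^{tL_2}$), are not defined classically. Second, $A_1A_5f$ contains $\lambda\, v\cdot\na_x\int f(x,w)\nu_d(\dd w)$, which is unbounded in $v$. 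Third, and most seriously, even bounded continuity would not give the ``uniform $\mathcal O(\delta^2)$ remainders'' you need: the output of $e^{\delta L_3}$ contains an $x$-dependent, $v$-independent component of size $\delta\lambda\bar f(x)$ with $\bar f(x)=\int f(x,w)\nu_d(\dd w)$, and the transport factor is not strongly continuous in sup norm at such functions, since $\sup_{x,v}|\bar f(x+\delta v/2)-\bar f(x)|$ does not vanish as $\delta\to 0$ (take $|v|\sim 1/\delta$). So the estimate you rely on is false as stated, not merely unverified, and your closing remark that the missing step is ``routine bookkeeping'' is where the proof actually breaks.

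This is precisely the point at which the paper inserts a preliminary reduction that your proposal omits: the bounce rates $(v\cdot F_i(x))_+$ are replaced by smoothed rates $\lambda_{i,\varepsilon}$ vanishing near the discontinuity set of $R_i$, the Gaussian refreshment is replaced by a truncated one, the resulting generator $L_\varepsilon$ satisfies $\|Lf-L_\varepsilon f\|_\infty\leqslant C\varepsilon\|f\|_\infty$, and a synchronous coupling shows that the original and regularized processes (and chains) coincide up to a time stochastically bounded below by an exponential variable of parameter $C\varepsilon$, so that it suffices to prove the convergence for the regularized dynamics; only then is the pairwise Duhamel estimate $\|e^{tB_1}e^{tB_2}f-f-t(B_1+B_2)f\|_\infty=o(t)$ carried out and Kallenberg applied. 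The paper explicitly notes that the stability of the class of nice test functions under the operators ``is false without the smoothing/truncation step, which is the reason why it has been added''. Without some such regularization/coupling (or an equivalent localization device controlling the large-$|v|$ region and the non-smooth bounce terms), the sup-norm generator convergence at the heart of your argument cannot be obtained, so the proposal as written does not constitute a proof.
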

\begin{proof}
We start by a smoothing/truncation step similar to the study in \cite[Theorem 21]{DurmusGuillinMonmarcheToolbox}. For all $\varepsilon>0$ small enough and all $i\in\cco 1,N\ccf$, we consider the jump rate on $E$ given by
\[\lambda_{i,\varepsilon}(x,v) \ = \ \frac{\po v\cdot F_i(x) - \varepsilon\pf_+^2 }{\varepsilon + \po v\cdot F_i(x) - \varepsilon\pf_+}\]
and the associated regularized bounce operator 
\[A_{3,i,\varepsilon} f(x,v)  \ =\   \lambda_{i,\varepsilon}(x,v) \po f\po x,R_i(x,v)\pf - f(x,v)\pf\,. \]
We also consider $\nu_{d,\varepsilon}$ the standard Gaussian law conditioned on $\{|v|\leqslant 1/\varepsilon\}$ and the corresponding truncated refreshment operator 
\[A_{5,\varepsilon} f(x,v) \ = \ \int_{\R^d} \po f(x,w) -f(x,v)\pf \nu_{d,\varepsilon}(\dd w)\,.\]
Then, $L_{\varepsilon} :=  A_1+A_2 +   \sum_{i=1}^N A_{3,i,\varepsilon}  +   \gamma A_4 + \lambda A_{5,\varepsilon}$ is   such that
\[\sup_{\|f\|_\infty \leqslant 1} \| Lf - L_{\varepsilon} f\|_\infty \leqslant C \varepsilon\]
for some $C>0$. The two processes $(Y_t,W_t)_{t\geqslant 0}$ and $(Y_t^\varepsilon,W_t^\varepsilon)_{t\geqslant 0}$ with respective generators $L$ and $L_\varepsilon$ can be defined simultaneously following the synchronous coupling detailed in \cite[Section 6]{DurmusGuillinMonmarcheToolbox}, in such a way that $(Y_t,W_t)=(Y_t^\varepsilon,W_t^\varepsilon)$ up to a random time that is stochastically bounded above by an exponential variable with parameter $C\varepsilon$. In other words, for all $t\geqslant 0$,
\[\mathbb P \po (Y_s,W_s)=(Y_s^\varepsilon,W_s^\varepsilon)\, \forall s\in[0,t]\pf \ \geqslant \ 1 - e^{-C\varepsilon t}\,.\]
In particular $(Y_t^\varepsilon,W_t^\varepsilon)_{t\geqslant 0}\overset{law}{\underset{\varepsilon\rightarrow 0}\longrightarrow}(Y_t,W_t)_{t\geqslant 0}$.
 The same coupling argument holds for a chain $(X_n^{\delta,\varepsilon},Y_n^{\delta,\varepsilon})_{n\in\N}$ with transition operator $Q_{\delta,\varepsilon}$ defined like  $Q_\delta$ but after smoothing/truncation. 
%
 It is thus sufficient to prove Proposition~\ref{Prop:hybrid} in the case where $A_5$ and $(A_{3,i})_{\in\cco 1,N\ccf}$ are replaced by the smooth and truncated operators.

  Denote $\mathcal B = \mathrm{vect}\po\{A_1,A_2,A_4,A_{5,\varepsilon}\}\cup\{A_{3,i,\varepsilon}\ : \ i\in\cco 1,N\ccf\}\pf$. If $f\in\mathcal C_c^\infty(E)$ then $B f\in\mathcal C_c^\infty(E)$ for all $B\in\mathcal B$ (this is false without the smoothing/truncation step, which is the reason why it has been added). Since $\mathcal C_c^\infty(E)$ is in the strong domain of all operators of $\mathcal B$, we can use  for $B_1,B_2\in\mathcal B$ the decomposition
\begin{multline*}
e^{tB_1}e^{tB_2} f - f - t(B_1+B_2) f  = e^{tB_1}\po e^{tB_2}f-f-tB_2 f\pf + e^{tB_1} f - f - tB_1 f\\
 + t \po e^{tB_1}B_2 f - B_2 f\pf
\end{multline*}
and the fact $\|e^{tB} f\|_\infty \leqslant \|f\|_\infty$ for all $B\in\mathcal B$ to get that
\[\| e^{tB_1}e^{tB_2} f - f - t(B_1+B_2) f \|_\infty  \ = \ \underset{t\rightarrow0}o(t^2)\]
for all  $f\in\mathcal C_c^\infty(E)$. Using this repeatedly, we get that
\[\|\frac{1}{\delta} \po Q_{\delta,\varepsilon} f - f\pf - L_\varepsilon f\|_\infty \underset{\delta \rightarrow 0}\longrightarrow 0\]
for all $f\in\mathcal C_c^\infty(E)$, and \cite[Theorem  17.28]{Kallenberg} concludes.
\end{proof}


\subsection{Computational complexity}\label{SubSectionLJ}

Let us informally discuss  the efficiency of the algorithm introduced in the previous section that defines the transition associated with the operator $Q_\delta$. It should be compared with the transition of a numercial scheme for classical processes, like Langevin or Hamiltonian dynamics. The numerical cost mainly comes from the computation of the forces. For the Langevin or Hamiltonian dynamics, at each time step, $\nabla U$ is computed once. Similarly, for a transition associated with $Q_\delta$,  $F_0$ is evaluated once per step, at $x=\widetilde X_n$. What about $(F_i)_{i\in \cco 1,N\ccf}$?

First, consider a naive construction of the third step of the algorithm, namely the construction of $W_\delta$ when $(Y_t,W_t)_{t\geqslant 0}$ is a continuous-time Markov chain with generator $L_3$ and initial condition $(x,w)\in\R^{d}\times\R^d$. For all $t\geqslant 0$, $Y_t=x$. Start by sampling a Poisson process with intensity $\lambda$ on $[0,\delta]$, consider $T_0$ its last jump in $[0,\delta]$ (with $T_0=0$ if there is no jump, in particular if $\lambda=0$). If $T_0=0$, set $W_{T_0}=w$, else draw $W_{T_0}$ according to the standard Gaussian distribution. Suppose by induction that $T_n$ and $W_{T_n}$ have been defined for some $n\in\N$.  Let $(E_{n,i})_{i\in\cco 1,N\ccf}$ be i.i.d. exponential random variable and let 
\[\forall i\in\cco 1,N\ccf\,, \, T_{n,i} = \frac{E_{n,i}}{\po W_{T_n}\cdot F_i(x)\pf_+}\,, \qquad T_{n+1} = \min_{i\in\cco 1,N\ccf} T_{n,i}\,.\]
Let $i_n\in\cco 1,N\ccf$ be such that $T_{n,i_n}=T_{n+1}$ ($i_n$ is almost surely unique), set $W_t = W_{T_n}$ for all $t\in[T_n,T_{n+1})$ and $W_{T_{n+1}} = R(x,W_{T_n})$. Remark that the norm of $W_t$ is conserved at a jump time, so that the jump rate $\sum_{i=1}^N (W_t\cdot F_i(x))_+$ is bounded and $\max\{n\in\N\ : \ T_n<\delta\}$ is almost surely finite, and thus $W_\delta$ is defined after a finite number of jumps.

This is a correct construction, but it relies on the computation of $F_{i}(x)$ for all $i\in\cco 1,N\ccf$. In other words, with this naive construction, there is no numercial gain: $\na U(x) = \sum_{i=0}^N F_i(x)$ is computed at each time step. As a consequence, the algorithm is only useful if a relevant thinning procedure such 
as discussed in Section \ref{SubSection-FactorisationZZZ} is available, to avoid the systematic computation of $(F_i(x))_{i\in\cco 1,N\ccf}$ at each step. 
 Such a thinning relies on suitable bounds on the vector fields $(F_i)_{i\in\N}$, that depends on the form of $U$ and of the way $\na U$ is splitted. For this reason, in the rest of this section, we won't address this question in a general case, but rather  focus on a particular example. This will illustrate the fact that there exist cases where splitting forces between drift and jump mechanisms can decrease the total computational cost of the simulation.

\subsubsection{A motivating example}\label{Sec:LennardJones}

To fix ideas, consider a system of $M$ particles in the torus $(a\mathbb T)^3$ interacting through truncated a Lennard-Jones potential, in other words,  given some parameters $U_0,r,a>0$, the total energy of the configuration $x\in ((a\mathbb T)^3)^M$ is
\[U(x) \ = \  U_0 \sum_{i=1}^M \sum_{j\neq i} W \po |x_i-x_j| \pf \]
where  $W(h) = [(r/h)^{12}-(r/h)^6]\chi(h/a)$ with $\chi$ a $\mathcal C^2$ positive function with values in $[0,1]$ such that $\chi(s)=1$ for $s\leqslant 1/2$ and  $\chi(s)=0$ for $s\geqslant 1$ (in particular, a particle doesn't interact with its periodic image, or with several copies of the same other particle). Remark that strictly speaking this problem doesn't enter the framework considered above where, for the sake of  simplicity, $U$ was supposed smooth, but this is typically the kind of singular potentials met in molecular dynamics simulations.

For $i\in \cco 1,M\ccf$, denote $J_i$ the $3M\times 3$ matrix with zeros everywhere except $J_i(3(i-1)+1,1)=J_i(3(i-1)+2,2)=J_i(3(i-1)+3,3)=1$, so that $\na U = \sum_{i=1}^M J_i \na_{x_i} U$. For some $R\ll a$, we decompose $\na U = F_0 +\sum_{i=1}^M \sum_{j\neq i} J_i G_{i,j}$ with
\begin{eqnarray*}
F_0(x) &=&   U_0  \sum_{i=1}^M J_i \sum_{j\neq i} \frac{x_i-x_j}{|x_i-x_j|}W'(|x_i-x_j|) \chi(|x_i-x_j|/R) \\
G_{i,j}(x) & = &  U_0   \frac{x_i-x_j}{|x_i-x_j|}W'(|x_i-x_j|) \po 1 - \chi(|x_i-x_j|/R)\pf 
\end{eqnarray*}
 Then $F_0$ gathers the (singular) short-range forces, and the $G_{i,j}$'s the (bounded) long-range ones. Set
\begin{eqnarray}\label{Eq:splitLJ}
  F_{i,j} &=& J_i G_{i,j}\qquad \forall i,j\in\cco 1,M\ccf,\ i\neq j\,.
\end{eqnarray} 
In particular the number of jump mechanisms is $N=M(M-1)$. Remark that the jump rate associated with the vector field $F_{i,j}$ can be bounded as $(v\cdot J_i G_{i,j}(x))_+ \leqslant |v_i|U_0\sup\{|W'(s)|,\ s\geqslant R/2\}:=|v_i|C_R$.  As $R $ increases (keeping $R \ll a$), $C_R$ decays very fast   toward zero (more precisely, as $R^{-5}$).

Conversely, computing $F_0(x)$ involves computing, for each $i\in\cco 1,N\ccf$,  a sum over all the particles $j$ such that $\{|x_i-x_j|\leqslant R\}$. The numerical cost of this computation increases with $R$, but is small with respect to the computation of $\nabla U(x)$ as long as $R\ll a$.

\subsubsection{Thinning}
For the Lennard-Jones system with the decomposition $\na U = F_0 + \sum_{i=1}^M \sum_{j\neq i } F_{i,j}$ introduced above, the third step of the algorithm of Section~\ref{SubSectionStrang} can be achieved as follows:
\begin{itemize}
\item Set $W=\widetilde V_n$.
\item For all $i\in\cco 1,M\ccf$,
\item \qquad Draw $K_i$ according to a Poisson law with parameter $|W_i|C_RM\delta$.
\item \qquad For all $k\in \cco 1,K_i\ccf$,
\item \qquad \qquad Draw $j$ uniformly over $\cco 1,M\ccf$ and $U_{i,k}$ uniformly over $[0,1]$.
\item \qquad \qquad If $j\neq i$ and $U_{i,k} \leqslant \po W_i \cdot G_{i,j}(\widetilde X_n)\pf_+ / |W_i|C_R$, do
\[W_i \ \leftarrow \ R_{i,j}(x,W_i) :=  W_i - 2\po \frac{G_{i,j}(\widetilde X_n)\cdot W_i}{|G_{i,j}(\widetilde X_n)|^2}G_{i,j}(\widetilde X_n)\pf \,,\] 
\qquad \qquad else do nothing.
\item \qquad end for all $k$
\item end for all $i$.
\item Set $\widehat V_n = W$.
\end{itemize}
We call this the thinned algorithm. Remark that $|W_i|$ is unchanged by the reflection $R_{i,j}$,  which allows to draw $K_i$, the number of jump proposals for the $i^{th}$ particle, at the beginning of the loop (but in practice this is not important). More crucially, note also that each particle $i\in \cco 1,M\ccf$ can be treated in parallel, and there is no need for  time synchronization: indeed, two particles $i$ and $j$ only interact through $G_{i,j}(\widetilde X_n)$, and the positions $\widetilde X_n$ are fixed at this step, so that the velocity jumps of each particle doesn't affect the law of the other.

\subsubsection{Numerical efficiency}

Computing $\na U$ has a numerical cost of order $\mathcal O(M^2)$, and it is computed $T/\delta $ times if we sample a trajectory of a classical process (Langevin dynamics, etc.) in a time interval $[0,T]$ with a usual integrator with time-step $\delta $. Let us compare this with the cost of computing gradients in the thinned algorithm for the hybrid process.

First, denote $\mathcal N(R)$ the average number of particles that are at distance less than $R$ from a given particle. By using a Verlet list of neighbors, computing $F_0$ has an average cost of $\mathcal O\po M \mathcal N(R)\pf$.

Second, denote $D_T$ the number of times that $G_{i,j}$ has been evaluated for some $i,j\in\cco 1,M\ccf$ in a trajectory of length $T$ with time-step $\delta$. Then
\[D_T \ = \ \sum_{n=1}^{T/\delta} \sum_{i=1}^M S_{n,i}\]
where, conditionally to the $\widetilde V_{n,i}$'s the $S_{n,i}$'s are independent  Poisson random variables with parameter $|\widetilde V_{n,i}|C_RM\delta$. By the law of large numbers for ergodic Markov chains,
\[D_T \ \underset{T\rightarrow\infty}\simeq \ T C_RM^2  H\]
where $H$ is the average of $|w_i|$ with respect to the equilibrium distribution of the chain. When $\delta$ is small, the velocity distribution at equilibrium is close to a standard Gaussian one of dimension 3, so that $H\leqslant \sqrt{3} + \mathcal O(\delta)$.

Computing $G_{i,j}(x)$ being of order $\mathcal O(1)$, the total cost is  $\mathcal O(M^2 C_R T + M \mathcal N(R)T/\delta)$. As noted previously, as $R$ increases, $C_R$ decays but $\mathcal N(R)$ increases. In the regime where $\mathcal N(R) \ll M$,   the gain in the thinned algorithm, with respect to an integrator where the full gradient $\nabla U$ is computed at each step, is the ratio of $M^2 C_R T$ and $M^2T/\delta$, namely $C_R\delta $. In other words, the long-range forces are only evaluated at an average time-step $1/C_R$. Depending on the parameters, this can be a significant speed-up. Indeed, $\delta$ is typically constrained to be small in order to handle the singular behaviour of short-range forces while, for intermediate values of $R$, $C_R$ can already be small. We refer to \cite{MonmarcheBouncyChimie} for a practical case where $\delta C_R \simeq 10^{-6}$.

\subsubsection{Sampling rate: the mean-field regime}

The numerical efficiency discussed above is encouraging, but one should be careful when comparing two Markov processes for sampling. Indeed, suppose we are given two kinetic processes $(X_t,V_t)_{t\in[0,T]}$ and $(\tilde X_t,\tilde V_t)_{t\in[0,T]}$, both $\mu$-ergodic, such that simulating a trajectory of length $T$ for the first one is $10$ times faster than for the seconde one. This is useless if the first process explores the space $100$ times slower than the second one so that, to achieve a similar quality of sampling, much longer trajectories are required.

Such a situation may be feared for the hybrid drift/jump process introduced above in the Lennard-Jones model. Indeed, under the effect of many collisions, the process may show as $M\rightarrow \infty$  a diffusive behaviour, namely the jumps may average and give a velocity close to zero. In that case, the exploration of the space (hence the convergence toward equilibrium) would be very slow.  
 Let us (informally) check that it is not the case here, at least in the mean-field regime: in the following, we suppose that $U_0=1/M$ (in which case the constant $C_R$ is of order $1/M$).

Consider the Langevin dynamics with generator $L=A_1+A_2+A_4$ with $F_0 = \na U$. Suppose that the initial conditions $(x_i,v_i)_{i\in\cco 1,M\ccf}$ are i.i.d. with the $x_i$'s distributed according to some law $\nu$. Conditionally to $(x_1,v_1)$, by the law of large numbers, the force felt by the first particle at time $0$ satisfies
\[\na_{x_1} U(x) \ = \ \na_{x_1}  \frac1M \sum_{j=2}^M W(|x_i-x_j|) \ \underset{M\rightarrow+\infty}\longrightarrow \na_{x_1} \int_{(a\T)^3} W(|x_1-z|)\nu(\dd z)\,. \]
Classical propagation of chaos results show that, as $M\rightarrow +\infty$, the particles behaves approximately as $M$ independent processes on $(a\T)^3\times \R^3$ with generator
\[L_t f \ = \ v\cdot \na_x f- \na_x \po \int_{(a\T)^3} W(|x-z|) \nu_t (\dd  z) \pf  \cdot\na_v f - v\cdot \na_v f + \Delta_v f\,,\]
where $\nu_t$ is the law of the process at time $t$, solution of the non-linear equation $\partial_t (\nu_t f) = \nu_t L_t f$ for all nice $f$.

Similarly, for the hybrid jump/diffusion process obtained from the splitting of the forces introduced in Section~\ref{Sec:LennardJones}, conditionally to $(x_1,v_1)$, by the law of large numbers, denoting $g$ the function such that $G_{i,j}(x) = g(x_i-x_j)/M$ for $i,j\in\cco 1,M\ccf$, the jump rate at time 0 for the first particle  satisfies
\begin{eqnarray*}
\frac1M \sum_{j=2}^M\po v_1 \cdot G_{1,j}(x)\pf_+ & \underset{M\rightarrow\infty }\longrightarrow &  \int_{(a\T)^3}\po v_1 \cdot g(|x_1-z|)\pf_+\nu(\dd z)  :=  \lambda_{\nu}(x_1,v_1) \,,
\end{eqnarray*}
and a similar limit $Q_{\nu}$ is obtained for the jump kernel at time $0$.  From the propagation of chaos phenomenon, the system is expected to behave as $M\rightarrow \infty$ as $M$ independent non-linear processes on $(a\T)^3\times\R^3$ with non-homogeneous generators given by
\[L_t f \ = \ v\cdot \na_x f-\po \int_{(a\T)^3} \tilde g(x-z) \nu_t (\dd  z) \pf  \cdot\na_v f  +  \lambda_{\nu_t} \po   Q_{\nu_t} f - f\pf - v\cdot \na_v f + \Delta_v f\]
with a suitable function $\tilde g$, and $\nu_t$ the law of the process (see \cite{2018MonmarcheCouplage} and references within). The non-homogeneous Markov process with generator $L_t$ is not degenerated in the sense its velocity is not averaged to zero, which means that for large $M$ the system of interacting particles is not expected to have a diffusive behaviour, nor to converge to equilibrium with a rate that vanishes as $M\rightarrow +\infty$.


%

Of course the dynamics (Langevin versus hybrid) are different and thus they may converge toward equilibrium at different speed.  Nevertheless, the ratio of the convergence rate may be expected to be of order independent from $M$, since the mean-field limits as $M\rightarrow \infty$ are obtained with the same scaling (namely it was not necessary  to accelerate time). In other words, in order to achieve the same convergence as the Langevin process in a time $T$ (for a cost of order $\mathcal O(M^2T/\delta)$), we  may need to sample the hybrid process up to a time $T'>T$ (for a cost of order $\mathcal O(M\mathcal N(R) T'/\delta + M^2C_RT')$) but with a ratio $T'/T$ bounded below independently from $M$. For $R$ small enough (independent from $M$) so that $\mathcal N(R) < (T/T') M$, using that $C_R$ is $\mathcal O(1/M)$, we see that the hybrid  process appears as the most efficient for large $M$.

More generally, if $U_0$ is not of order $1/M$, the conclusion may be unclear, but the informal discussion above shows that there exist cases where the parameters of the problem are such that a suitable factorization can lower the numerical cost of the simulation at constant long-time convergence quality. We refer to \cite{MonmarcheBouncyChimie} for a practical case where the diffusion constant,  used as an indicator of the sampling rate, is of the same order for the two processes.

\subsubsection{An alternative splitting of the forces}

For the Lennard-Jones model, consider the splitting of the forces  $\na U = \sum_{i=0}^M F_i$ where $F_0$ is as above and  $F_i(x) = J_i \na_{x_i} U =  J_i\sum_{j\neq i} G_{i,j}$ for $i\in\cco 1,M\ccf$. Since $(v_i\cdot \sum_{j\neq i} G_{i,j})_+ \leqslant \sum_{j=1}^M (v_i\cdot G_{i,j})_+$, this alternative splitting reduces the total jump rate of the process, by comparison with the previous splitting. Increasing the jump rate is known to increase the diffusive behaviour and the
asymptotical variance for the Zig-Zag process, at least in dimension $1$ \cite{BierkensDuncan}. Since the present case is similar, we may expect the alternative splitting to yield a process that converges faster toward equilibrium than the initial one. Nevertheless, in the mean-field case, similarly to the previous section, this speed-up should be independent from $M$.

On the other hand, let us estimate the numerical cost (in term of computations of forces) of the alternative process.  We can bound $|F_i(x)|\leqslant MC_R$ (with the same $C_R$ as previously), and thus a thinning algorithm yields again an average of $MC_R T$ jump proposals per particle in a time period $[0,T]$. But now, at each jump proposal, $F_i(x)$ has to be computed, which has a cost $\mathcal O(M)$, so the total cost is $\mathcal O(M^3 C_R T)$.

This little computation indicates that, in the case of forces that come from pairwise interactions between particles, rather than considering one jump mechanisms per particle (as in the usual Zig-Zag process, and in the alternative splitting of this section), it may be better to split $\na_{x_i} U$ so that each jump mechanism is only associated to a single interaction $(i,j)$ (as in the initial splitting introduced in Section~\ref{Sec:LennardJones}).

\section*{Acknowledgements}

The author would like to thank Gabriel Stoltz and Louis Lagardère for fruitfull discussions about Strang splitting schemes, and acknowledges partial financial support from  the  ERC grant MSMATHS (European Union’s Seventh Framework Programme (FP/2007-2013)/ERC Grant Agreement number 614492) and the ERC grant EMC2, and from the French ANR grant EFI (Entropy, flows, inequalities,  ANR-17-CE40-0030).

\bibliographystyle{plain}
\bibliography{biblio_persistent}

\end{document}